\pgfplotsset{compat = newest}
\title{Convergence analysis of a norm minimization-based convex vector optimization algorithm\thanks{This work was funded by T\"UB{\.I}TAK (Scientific \& Technological
		Research Council of Turkey), Project No. 118M479.}}
\author{\c{C}a\u{g}{\i}n Ararat\thanks{Bilkent University, Department of Industrial Engineering, Ankara, Turkey, cararat@bilkent.edu.tr.}
	\and
	Firdevs Ulus\thanks{Bilkent University, Department of Industrial Engineering, Ankara, Turkey, firdevs@bilkent.edu.tr.}
	\and
	Muhammad Umer\thanks{National University of Sciences and Technology (NUST), Department of Industrial Engineering, Islamabad, Pakistan, mhmdumer@cae.nust.edu.pk.}
}
\makeatletter \renewenvironment{proof}[1][\proofname] {\par\pushQED{\qed}\normalfont\topsep6\p@\@plus6\p@\relax\trivlist\item[\hskip\labelsep\bfseries#1\@addpunct{.}]\ignorespaces}{\popQED\endtrivlist\@endpefalse} \makeatother
\DeclareMathOperator*{\argmax}{arg\,max}
\newcommand{\N}{\mathbb{N}}
\newcommand{\R}{\mathbb{R}}
\newcommand{\out}{\text{out}}
\newcommand{\inn}{\text{in}}
\DeclareMathOperator{\wMin}{wMin}
\DeclareMathOperator{\Min}{Min}
\DeclareMathOperator{\cl}{cl}
\DeclareMathOperator{\bd}{bd}
\DeclareMathOperator{\Int}{int}
\DeclareMathOperator{\conv}{conv}
\DeclareMathOperator{\cone}{cone}
\DeclareMathOperator{\ext}{ext}
\DeclareMathOperator{\recc}{recc}
\newcommand{\dom}{{\rm dom\,}}
\newcommand{\norm}[1]{\lVert#1\rVert}
\newcommand{\abs}[1]{\lvert#1\rvert}
\newtheorem{theorem}{Theorem}
\numberwithin{equation}{section}
\numberwithin{theorem}{section}
\newtheorem{proposition}[theorem]{Proposition}
\newtheorem{lemma}[theorem]{Lemma}
\newtheorem{corollary}[theorem]{Corollary}
\newtheorem{definition}[theorem]{Definition}
\newtheorem{remark}[theorem]{Remark}
\newtheorem{assumption}[theorem]{Assumption}
\newtheorem{notation}[theorem]{Notation}
\newtheorem{problem}[theorem]{Problem}
\newcounter{algo}
\newcommand{\of}[1]{\ensuremath{\left( #1 \right)}}
\newcommand{\cb}[1]{\ensuremath{ \left\{ #1 \right\} }}
\newcommand{\sqb}[1]{\ensuremath{ \left[ #1 \right] }}
\begin{document}
\maketitle
\thispagestyle{empty}

\begin{abstract}
In this work, we propose an outer approximation algorithm for solving bounded convex vector optimization problems (CVOPs). The scalarization model solved iteratively within the algorithm is a modification of the norm-minimizing scalarization proposed in \cite{ararat2021norm}. For a predetermined tolerance $\epsilon>0$, we prove that the algorithm terminates after finitely many iterations, and it returns a polyhedral outer approximation to the upper image of the CVOP such that the Hausdorff distance between the two is less than $\epsilon$. We show that for an arbitrary norm used in the scalarization models, the approximation error after $k$ iterations decreases by the order of $\mathcal{O}(k^{{1}/{(1-q)}})$, where $q$ is the dimension of the objective space. An improved convergence rate of $\mathcal{O}(k^{{2}/{(1-q)}})$ is proved for the special case of using the Euclidean norm.\\
\\[-5pt]
\textbf{Keywords and phrases:} Convex vector optimization, multiobjective optimization, approximation algorithm, convergence rate, convex compact set, Hausdorff distance.\\
\\[-5pt]
\textbf{Mathematics Subject Classification (2020): }52A20, 52A27, 90B50, 90C25, 90C29.
\end{abstract}

\section{Introduction}

Vector optimization refers to minimizing a vector-valued objective function with respect to a given order relation over a feasible region. The special case where this order relation is induced by the positive orthant yields multiobjective optimization. We refer the reader \cite{armand, evans1973revised, newton, psimplex, econometrica} for multiobjective optimization algorithms working in the decision space and to \cite{benson1998outer,ehrgott_shao2008,tradeoffs,bbeichfelder} for algorithms working in the objective space.  

In this paper, we consider vector optimization problems in which the order relation is induced by a polyhedral ordering cone. 
Having efficient solution approaches for 
vector optimization problems is crucial in many application areas including financial mathematics \cite{ararat2019computation,FeiRud2017}, economics \cite{RudUlu2021}, optimization and control \cite{RudKov2021, shao2019numerical}, and machine learning \cite{ararat2023vector,giesen2019using,giesen2019efficient}. Additionally, 
vector optimization algorithms have been used in solving other challenging mathematical problems such as convex projections \cite{KovacovaRudloff2021}, time inconsistent dynamic optimization problems \cite{RudKov2021}, nonzero-sum games \cite{feinstein2023characterizing} and mixed integer multiobjective optimization problems \cite{DeSanEicNieRoc2020,bbeichfelder}. 

Specifically, we focus on convex vector optimization problems (CVOPs) in which the objective function is cone-convex and the feasible region is a convex set. There are different solution concepts and algorithms for CVOPs in the literature. Many solution approaches are objective space-based, and they generate polyhedral approximations to the set of all (weakly) minimal elements in the objective space. To that end, they consider the \emph{upper image}, which is the image of the feasible region plus the ordering cone, and iteratively generate polyhedral outer approximations to it by solving scalarization problems. See, for instance, \cite{hamel2014benson,lohne2011vector} for algorithms to solve linear vector optimization problems, and \cite{ararat2023geometric, ararat2021norm, vertexselection, ehrgott2011approximation, keskin_ulus21, klamroth2007constrained, lohne2014primal} for algorithms to solve bounded CVOPs. Recently, in \cite{unboundedCVOP_23}, algorithms and solution concepts are proposed for unbounded CVOPs.

In this paper, we focus on bounded CVOPs, propose an objective space-based outer approximation algorithm, and study its convergence rate. Note that the literature on the convergence analysis of such algorithms is very limited. For some special cases, the finiteness has been studied without determining the convergence rate. This is the case for some linear vector optimization algorithms, see \cite{lohne2011vector}. Moreover, an outer approximation algorithm for nonlinear multiobjective optimization problems (MOPs), in which the ordering cone is the positive orthant, is proposed and proved to be finite in \cite{nobakhtian2017benson}. Recently, Ararat et al. \cite{ararat2021norm} proposed an outer approximation algorithm for general CVOPs and showed its finiteness. Their result is mainly based on constructing polyhedral outer approximations of the upper image such that the vertices are guaranteed to be within a compact set in each iteration.

On the other hand, in 2003, Klamroth et al. \cite{klamroth2003unbiased} proposed approximation algorithms for convex and non-convex MOPs, and the convergence rates of the algorithms are provided for the biobjective case. In particular, they proved that, for biobjective problems, the approximation error after $k$ iterations of the algorithms in \cite{klamroth2003unbiased} decreases by the order of $\mathcal{O}(k^{-2})$. Later, in 2007, Klamroth and Tind \cite{klamroth2007constrained} provided the convergence rate of similar algorithms for the multiobjective setting. Accordingly, if there are $q \geq 3$ objective functions, then the approximation error after $k$ iterations decreases by the order of $\mathcal{O}(k^{{2}/{(1-q)}})$. It is also noted that this convergence rate is in general the best possible rate, see also \cite{gruber1992aspects}. Note that the algorithms in \cite{klamroth2007constrained,klamroth2003unbiased} cannot be directly applied to CVOPs as they require the ordering cone to be the positive orthant, which is not the case for general CVOPs. 

The convergence rate provided in \cite{klamroth2007constrained} is based on existing results on the convergence rate of algorithms for finding polyhedral approximations of convex compact sets. Indeed, there is rich literature on this subject; see \cite{kamenev1992class, kamenev2002conjugate, kamenev2012polyhedral, lotov2013interactive}, 
for instance. In general, the results for convex compact sets cannot be directly used to establish the convergence rate of algorithms for CVOPs since the upper image is an unbounded set by its structure. However, the design of the MOP algorithms in \cite{klamroth2007constrained}, which depends heavily on the symmetry of the orthants, allowed them to use the results in \cite{kamenev1992class, lotov2013interactive} directly to establish the convergence rate. 

In this paper, we propose an outer approximation algorithm for CVOPs and study its convergence rate. To the best of our knowledge, this is the first paper providing a convergence analysis of a general CVOP algorithm in the literature. The proposed algorithm is based on a norm-minimizing scalarization, which is similar to the one proposed in \cite{ararat2021norm}. We modify this scalarization by adding a single constraint which enables us to work with a compact subset of the upper image, which still contains the set of all (weakly) minimal elements on its boundary. 

Note that any norm on the objective space can be used to form the scalarization models solved within the algorithm. We prove that, for any norm, the proposed algorithm is finite, and the approximation error after $k$ iterations decreases by the order of $\mathcal{O}(k^{{1}/{(1-q)}})$, where $q$ is the dimension of the objective space. In particular, we prove that the sequence of outer approximations formed by the algorithm is an \emph{H-sequence of cutting}, see \Cref{def:LotovH}, and then apply the results in \cite{kamenev1992class, lotov2013interactive}, directly. 

Moreover, we consider the special case of using the Euclidean norm within the scalarizations separately and prove that the approximation error decreases by the order of $\mathcal{O}(k^{2/(1-q)})$. Different from the previous case, the results in \cite{kamenev1992class, lotov2013interactive} are not directly applicable to establish the improved convergence rate. Hence, the convergence analysis for the outer approximating polytopes produced by the proposed algorithm is proved from scratch.

The paper is organized as follows: \Cref{sec:Prelim} provides basic definitions, notations and the problem description, and it recalls some well-known results used throughout the paper. In \Cref{sec:scal}, the norm-minimizing scalarization is introduced and corresponding results are presented. The algorithm is explained in \Cref{sec:alg2} and its finiteness is proved in \Cref{sec:fin}. The convergence analysis of the algorithm is presented for any norm and for the special case of Euclidean norm in Sections \ref{sec:conv} and \ref{sec:conv_l2}, respectively. Finally, we provide computational results in \Cref{sec:experiments} and conclude the paper in \Cref{sec:concl}.

\section{Preliminaries and problem definition} \label{sec:Prelim}

Throughout the paper, let $\R^q$ denote the $q$-dimensional Euclidean space, where $q\in\N:=\{1,2,\ldots\}$, and $\norm{\cdot}$ be an arbitrary fixed norm on $\R^q$ with dual norm $\norm{\cdot}_\ast$. For $y\in\R^q$ and $\epsilon>0$, we define $\mathbb{B}_{\epsilon}(y) \coloneqq \{z\in\R^q\mid \norm{y-z}\leq \epsilon\}$ as the closed ball centered at $y$ with radius $\epsilon$.

Let $A \subseteq \mathbb{R}^q$ be a set. We write $\Int A$, $\cl A$, $\bd A$, $\conv A$, $\cone A$ for the interior, closure, boundary, convex hull, conic hull of $A$, respectively. The \emph{indicator function} $I_A$ of $A$ is defined by $I_A(z)=0$ for $z\in A$ and by $I_A(z)=+\infty$ for $z\in\R^q\setminus A$. When $A,B\subseteq\R^q$ are nonempty sets and $\lambda\in\R$, we define $A+B:=\{y+z\mid y\in A, z\in B\}$, $\lambda A:=\{\lambda y\mid y\in A\}$, and $A-B:=A+(-1)B$. When $A$ is a nonempty convex set, the set of all $k\in\R^q$ such that $A+\cone\{k\}\subseteq A$ is the \emph{recession cone} of $A$, denoted by $\recc A$; a nonzero element of $\recc A$ is a \emph{recession direction} of $A$. When $A$ is a nonempty convex cone, the \emph{dual cone} of $A$, given by $A^+ := \{w \in \R^q \mid \forall y \in A : w^\mathsf{T}y \geq 0 \}$, is a closed convex cone.

A cone $C \subseteq \R^q$ is said to be \emph{solid} if $\Int C\neq\emptyset$ and \emph{pointed} if $C \cap -C = \{0\}$. Let $C\neq\R^q$ be a pointed convex cone. The relation $\leq_C$ on $\R^q$ defined by $y \leq_C z$ if and only if $z - y \in C$, $y,z\in\R^q$, is a partial order.

Let $A \subseteq \R^q$ be a set and let $y\in A$. We say that $y$ is a \emph{$C$-minimal element} of $A$ if there is no $z\in A\setminus\{y\}$ with $z\leq_C y$. When $C$ is solid, we say that $y$ is a \emph{weakly $C$-minimal element} of $A$ if $(\{y\} -\Int C) \cap A = \emptyset$. Let $\Min_C A$ denote the set of all $C$-minimal elements of $A$, and $\wMin_C A$ denote the set of all weakly $C$-minimal elements of $A$ whenever $C$ is solid.

Let $A, B\subseteq\R^q$ be nonempty sets. The Hausdorff distance between $A$ and $B$ is defined as
\[
\delta^H(A,B) \coloneqq \max\cb{\sup_{y \in A}d(y,B),\ \sup_{z\in B}d(z,A)},
\]
where $d(z,A):=\inf_{y\in A}\norm{z-y}$. By \cite[Proposition 3.2]{hausdorffsurvey}, we also have
\[
\delta^H(A, B) =\inf \cb{\epsilon > 0 \mid A \subseteq  B + \mathbb{B}_{\epsilon}(0),\ B \subseteq A + \mathbb{B}_{\epsilon}(0)}.
\]

Let $A\subseteq\R^q$ be a convex set and $w\in\R^q\setminus\{0\}$. We introduce the halfspace
\begin{equation}\label{eq:H}
	\mathcal{H}(w,A)\coloneqq \cb{z\in\R^q\mid w^{\mathsf{T}}z \geq \inf_{z^\prime\in A}w^{\mathsf{T}}z^\prime}.
\end{equation}
We also write $\mathcal{H}(w,y)\coloneqq \mathcal{H}(w,\{y\})$ for every $y\in\R^q$.	If $y\in A$ is such that $w^{\mathsf{T}}y=\inf_{z^\prime\in A}w^{\mathsf{T}}z^\prime$, then we have $\mathcal{H}(w,A)=\mathcal{H}(w,y)$. In this case, $\bd \mathcal{H}(w,A)=\{z \in \R^q\mid w^{\mathsf{T}}z = w^{\mathsf{T}}y\}$ is called a \emph{supporting hyperplane} of $A$ at $y$ and $\mathcal{H}(w,A)\supseteq A$ is called a \emph{supporting halfspace} of $A$ at $y$. We say that a point $v\in A$ is an \emph{extreme point} of $A$ if there does not exist $y,z\in A$ and $0<\lambda <1$ such that $y\neq z$ and $v = \lambda y +(1-\lambda)z$. We denote the set of all extreme points of $A$ by $\ext A$. If $A$ is polyhedral, then $\ext A$ coincides with the set of all vertices of $A$. 

The following lemmas are simple observations that will be used throughout.
\begin{lemma} \label{lem:ShiftedH}
	Let $w\in \R^q$ with $\norm{w}_\ast \leq 1$ and $y\in \R^q$. Then, $\mathcal{H}(w, y)+ \mathbb{B}_{\frac{\epsilon}{2}}(0) \subseteq \mathcal{H}(w, y,\epsilon)$, where 
	\begin{equation}\label{eq:shiftedH}
		\mathcal{H}(w, y,\epsilon)\coloneqq \cb{z \in \R^q \mid w^\mathsf{T}z \geq w^\mathsf{T}y - \frac{\epsilon}{2}}.
	\end{equation} 
\end{lemma}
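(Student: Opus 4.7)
The plan is to prove the inclusion by a direct element-chase, relying on the definition of the dual norm. The key inequality to invoke is the standard duality bound $\lvert w^\mathsf{T}b\rvert \leq \norm{w}_\ast \norm{b}$, valid for all $w,b \in \R^q$, together with the hypothesis $\norm{w}_\ast \leq 1$.

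First, I would fix an arbitrary point in the left-hand side, writing it as $z + b$ with $z \in \mathcal{H}(w,y)$ and $b \in \mathbb{B}_{\epsilon/2}(0)$. By definition of $\mathcal{H}(w,y)$ we have $w^\mathsf{T}z \geq w^\mathsf{T}y$, and by the duality inequality together with $\norm{b} \leq \epsilon/2$ and $\norm{w}_\ast \leq 1$, we obtain $w^\mathsf{T}b \geq -\norm{w}_\ast \norm{b} \geq -\epsilon/2$. Adding these two bounds yields
\[
w^\mathsf{T}(z+b) = w^\mathsf{T}z + w^\mathsf{T}b \geq w^\mathsf{T}y - \frac{\epsilon}{2},
\]
which is exactly the defining inequality of $\mathcal{H}(w,y,\epsilon)$ in \eqref{eq:shiftedH}. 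Therefore $z+b \in \mathcal{H}(w,y,\epsilon)$, completing the inclusion.

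There is no real obstacle here; the statement is essentially a one-line consequence of the definitions and the duality inequality. The only subtlety worth noting is that the statement is trivially meaningful even when $w=0$ (in which case both sets equal $\R^q$), so no case split is needed. The argument is the same for any norm $\norm{\cdot}$, which is why the scalarization framework later in the paper can be developed uniformly.
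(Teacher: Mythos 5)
Your proof is correct and follows essentially the same argument as the paper's: decompose the point as $z+b$, use $w^\mathsf{T}z \geq w^\mathsf{T}y$ from the definition of $\mathcal{H}(w,y)$, and bound $w^\mathsf{T}b \geq -\norm{w}_\ast\norm{b} \geq -\epsilon/2$ via H\"older's (duality) inequality. Nothing is missing.
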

\begin{proof}
	Let $y^\prime  \in \mathcal{H}({w}, y)$ and $y{^{\prime\prime}}\in \mathbb{B}_{\frac{\epsilon}{2}}(0)$. Then, using $y^\prime  \in \mathcal{H}({w}, y)$, H\"older's inequality and the facts that $\norm{w}_\ast \leq 1$ and  $\norm{y{^{\prime\prime}}} \leq \frac{\epsilon}{2}$, we have  \begin{align}\notag
		w^{\mathsf{T}} (y^\prime + y{^{\prime\prime}}) \geq  {w}^{\mathsf{T}} y + w^{\mathsf{T}} y{^{\prime\prime}} \geq {w}^{\mathsf{T}} y -\norm{w}_\ast \norm{y{^{\prime\prime}}}  \geq {w}^{\mathsf{T}} y - \frac{\epsilon}{2},
	\end{align}
	which completes the proof.
\end{proof}

\begin{lemma} \label{lem:PconeP}
	Let $A\subseteq \R^q$ be a nonempty convex compact set and $C\subseteq \R^q$ be a nonempty convex cone. Then, $\ext (A+C) \subseteq \ext A$.
\end{lemma}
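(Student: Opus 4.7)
The plan is to take any $v \in \ext(A+C)$ and establish the conclusion in two steps: first show $v \in A$, and then show that $v$ satisfies the extremality condition within $A$. Both steps exploit the fact that a nonempty convex cone is closed under multiplication by nonnegative scalars, so in particular $0 \in C$ and $2c \in C$ whenever $c \in C$.

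For the first step, since $v \in A+C$, I would pick a representation $v = a + c$ with $a \in A$ and $c \in C$, and write the trivial convex combination
\[
v = \tfrac{1}{2}\, a + \tfrac{1}{2}(a + 2c).
\]
Both points on the right lie in $A+C$: the point $a$ because $a = a+0 \in A+C$, and $a+2c$ because $2c \in C$ by the cone property. Since $v \in \ext(A+C)$, these two points must coincide, which forces $c = 0$ and therefore $v = a \in A$.

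For the second step, suppose $v = \lambda y + (1-\lambda) z$ with $y,z \in A$ and $\lambda \in (0,1)$. Using $0 \in C$ again, we have $A \subseteq A+C$, so $y,z \in A+C$. The extremality of $v$ in $A+C$ then forces $y = z = v$, establishing that $v \in \ext A$.

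There is no real obstacle beyond making explicit that a (nonempty) convex cone contains $0$ and is closed under positive scaling, so that the representations used above actually live in $A+C$; the rest is a direct application of the definition of an extreme point.
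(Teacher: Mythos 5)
Your proof is correct and uses essentially the same idea as the paper: the decomposition $a+c=\tfrac12 a+\tfrac12(a+2c)$ (the paper's version is $a+c=\tfrac{\lambda-1}{\lambda}a+\tfrac1\lambda(a+\lambda c)$ for $\lambda>1$) to force $c=0$, together with $A\subseteq A+C$ to transfer extremality. You merely organize it as a direct two-step argument instead of the paper's single contradiction, and you make explicit the convention $0\in C$ that the paper also uses implicitly.
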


\begin{proof}
	The inclusion is trivial if $\ext(A+C) = \emptyset$. Otherwise, let $v \in \ext(A+C) \setminus \ext(A)$ for a contradiction. Since $v \in A + C$, we have $v = a + c$ for some $a \in A$ and $c \in C$. Note that if $c=0$, then $v=a \notin \ext A$ implies that $v$ can be written as a non-trivial convex combination of points from $A \subseteq A+C$. As this contradicts $v \in \ext(A+C)$, we conclude that $c \neq 0$. Next, 
	for every $\lambda \geq 0$, we have $a + \lambda c \in A + C$.	For $\lambda > 1$, we can write $v$ as a strict convex combination of $a, a+\lambda c \in A+C$ as $v = a + c = \frac{\lambda - 1}{\lambda} a + \frac1\lambda (a + \lambda c)$. This is a contradiction to $v \in \ext(A+C)$.
\end{proof}

The mathematical problem that is the subject of this study is provided next.

\begin{problem} \label{problem}
	We consider a \emph{convex vector optimization problem (CVOP)} given by
	\begin{align}
		\text{minimize } \Gamma(x) \text{ with respect to } \leq_C \text{ subject to } x \in \mathcal{X}, \tag{P}\label{eq:P}
	\end{align}
	where $C \subsetneq\R^q$ is a closed convex solid and pointed cone, $n\in \N$, $X\subseteq \mathbb{R}^n$ is a convex set, $\Gamma\colon X \to \mathbb{R}^q$ is a $C$-convex and continuous vector-valued function, and $\mathcal{X}\subseteq X$ is a compact convex set with $\Int \mathcal{X}\neq \emptyset$. 
\end{problem}

Recall that $\Gamma$ is called \emph{$C$-convex} if $\Gamma(\lambda x_1 + (1-\lambda)x_2) \leq_C \lambda \Gamma(x_1) + (1 - \lambda) \Gamma(x_2)$ for every $x_1, x_2 \in X, \lambda \in [0,1]$; equivalently, for every $w\in C^+$, the function $x\mapsto w^{\mathsf{T}}\Gamma(x)$ on $X$ is convex.

The set $\mathcal{P} := \Gamma(\mathcal{X}) + C$ is called the \emph{upper image} of \eqref{eq:P}. Under the assumptions of \Cref{problem}, $\mathcal{P}$ is a closed convex set with $\mathcal{P}=\mathcal{P}+C$, see \cite[Remark~3.2]{ararat2021norm}. Moreover, \eqref{eq:P} is a \emph{bounded} CVOP in the sense that there exists a point $y \in \mathbb{R}^q$ such that $\mathcal{P} \subseteq \{y\} + C$, see \cite[Definition 3.1]{lohne2014primal}.

The next definition recalls the relevant solution concepts for the CVOP given by \Cref{problem}.

\begin{definition}\label{defn:finite epsilon-solution}
	Consider \Cref{problem}.
	\begin{enumerate} 
		\item A point $\bar{x} \in \mathcal{X}$ is called a \emph{(weak) minimizer} for \eqref{eq:P} if $\Gamma(\bar{x})\in \Min_C \Gamma(\mathcal{X})$ ($\Gamma(\bar{x})\in \wMin_C \Gamma(\mathcal{X})$). 
		\item 
		\cite[Definition 3.3]{vertexselection} Let $\bar{\mathcal{X}} \subseteq \mathcal{X}$ be a nonempty finite set of (weak) minimizers and $\mathcal{\bar{P}} :=\conv \Gamma(\bar{\mathcal{X}}) + C $. Then, $\bar{\mathcal{X}}$ is called a \emph{finite (weak) $\epsilon$-solution} of \eqref{eq:P} if
		$\mathcal{\bar{P}} + \mathbb{B}_{\epsilon}(0)  \supseteq \mathcal{P}$.
	\end{enumerate}
\end{definition}

\begin{remark}\label{rem:Hausdorff}
	$\mathcal{\bar{P}} + \mathbb{B}_{\epsilon}(0)  \supseteq \mathcal{P}$ is equivalent to $\delta^H(\mathcal{P}, \mathcal{\bar{P}}) \leq \epsilon$, see \cite[Remark 3.6]{ararat2021norm}.
\end{remark}

Below, we list some relevant results from the literature regarding \Cref{problem}.

\begin{proposition}\label{prop:solution exists}\cite[Proposition 3.8]{ararat2021norm}
	For every $\epsilon > 0$, there exists a finite $\epsilon$-solution of \eqref{eq:P}.
\end{proposition}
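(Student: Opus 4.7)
The plan is to exploit the compactness of $\Gamma(\mathcal{X})$ to build a finite $\epsilon$-net of its image, upgrade each net point to a genuine weak minimizer via a linear scalarization without losing the $\epsilon$-approximation, and finally verify the Hausdorff-type inclusion $\mathcal{P}\subseteq\bar{\mathcal{P}}+\mathbb{B}_\epsilon(0)$ using the cone invariance $\bar{\mathcal{P}}+C=\bar{\mathcal{P}}$.

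Since $\mathcal{X}$ is compact and $\Gamma$ is continuous, $\Gamma(\mathcal{X})\subseteq\R^q$ is compact. Hence, there exist finitely many points $y_1,\ldots,y_N\in\Gamma(\mathcal{X})$ with $\Gamma(\mathcal{X})\subseteq\bigcup_{i=1}^{N}\mathbb{B}_\epsilon(y_i)$; write $y_i=\Gamma(\tilde x_i)$ for some $\tilde x_i\in\mathcal{X}$. The $\tilde x_i$ need not be weak minimizers; to upgrade them, I would fix $w\in\Int C^+$ (nonempty since $C$ is solid and pointed) and, for each $i$, solve
\begin{equation*}
\min\cb{w^{\mathsf{T}}\Gamma(x)\mid x\in\mathcal{X},\ \Gamma(x)\leq_C y_i}.
\end{equation*}
The feasible region is nonempty (it contains $\tilde x_i$), closed by continuity of $\Gamma$ and closedness of $C$, and contained in compact $\mathcal{X}$; hence a minimizer $x_i$ exists.

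The key step is to verify that each $x_i$ is a weak minimizer of \eqref{eq:P}. If not, there exists $x\in\mathcal{X}$ with $\Gamma(x_i)-\Gamma(x)\in\Int C$; combined with $y_i-\Gamma(x_i)\in C$, this yields $y_i-\Gamma(x)\in C$, so $x$ is feasible for the $i$th scalarization, while $w\in\Int C^+$ forces $w^{\mathsf{T}}(\Gamma(x_i)-\Gamma(x))>0$, contradicting the optimality of $x_i$. Now set $\bar{\mathcal{X}}\coloneqq\{x_1,\ldots,x_N\}$ and $\bar{\mathcal{P}}\coloneqq\conv\Gamma(\bar{\mathcal{X}})+C$. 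For any $y\in\mathcal{P}$, write $y=y'+c$ with $y'\in\Gamma(\mathcal{X})$ and $c\in C$, and pick $i$ with $\norm{y'-y_i}\leq\epsilon$. Since $\Gamma(x_i)\leq_C y_i$, we have $y_i\in\Gamma(x_i)+C\subseteq\bar{\mathcal{P}}$, so $y_i+c\in\bar{\mathcal{P}}+C=\bar{\mathcal{P}}$; then $\norm{y-(y_i+c)}=\norm{y'-y_i}\leq\epsilon$ completes the verification.

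The main obstacle is precisely the upgrade from $\tilde x_i$ to the weak minimizer $x_i$: one must ensure that shifting the net point $y_i$ down in the $\leq_C$ order does not destroy the $\epsilon$-covering of $\mathcal{P}$. This is absorbed by the cone invariance $\bar{\mathcal{P}}+C=\bar{\mathcal{P}}$, which keeps the original net point $y_i$ inside $\bar{\mathcal{P}}$, so no approximation quality is lost in passing from the compactness-based net to the set of genuine weak minimizers.
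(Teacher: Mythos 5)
Your argument is correct and self-contained; note that the paper itself does not prove this proposition but imports it from \cite[Proposition~3.8]{ararat2021norm}, so there is no in-paper proof to match, and your compactness-net-plus-scalarization route is a perfectly legitimate way to establish it. The one point you should tighten is the distinction the paper draws in \Cref{defn:finite epsilon-solution} between a \emph{finite $\epsilon$-solution} (built from minimizers, $\Gamma(x_i)\in\Min_C\Gamma(\mathcal{X})$) and a \emph{finite weak $\epsilon$-solution} (built from weak minimizers): the proposition asserts the former, while you only verify weak minimality of each $x_i$. Fortunately your own construction already delivers full minimality: if $x\in\mathcal{X}$ satisfies $\Gamma(x)\leq_C\Gamma(x_i)$ with $\Gamma(x)\neq\Gamma(x_i)$, then $\Gamma(x_i)-\Gamma(x)\in C\setminus\{0\}$, so $x$ is feasible for your $i$th subproblem (since $y_i-\Gamma(x)=(y_i-\Gamma(x_i))+(\Gamma(x_i)-\Gamma(x))\in C+C=C$) and $w\in\Int C^+$ gives $w^{\mathsf{T}}(\Gamma(x_i)-\Gamma(x))>0$ — the strict inequality holds for \emph{every} nonzero element of $C$, not just interior ones — contradicting optimality. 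So replacing $\Int C$ by $C\setminus\{0\}$ in your contradiction argument upgrades each $x_i$ to a genuine minimizer at no cost, and the rest of your proof (the cone invariance $\bar{\mathcal{P}}+C=\bar{\mathcal{P}}$ absorbing the shift from $y_i$ down to $\Gamma(x_i)$, and the translation-invariance of the norm giving $\norm{y-(y_i+c)}=\norm{y'-y_i}\leq\epsilon$) goes through verbatim.
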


For each $w \in C^+\setminus\{0\}$, let us consider the corresponding \emph{weighted sum scalarization} of \eqref{eq:P}:
\begin{equation}\label{eq:P1(w)}
	\text{minimize } w^\mathsf{T}\Gamma(x) \text{ subject to }x \in \mathcal{X} .  \tag{WS$(w)$} 
\end{equation}

\begin{proposition}\label{prop:jahn2009vector} \cite[Corollary 2.3]{jahn1984}
	Let $w \in C^+ \setminus \{0\}$. Then, every optimal solution of \eqref{eq:P1(w)} is a weak minimizer of \eqref{eq:P}.
\end{proposition}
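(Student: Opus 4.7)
The plan is to argue by contradiction, exploiting the standard fact that any nonzero element of the dual cone $C^+$ is strictly positive on the interior of $C$.

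First I would fix an optimal solution $\bar{x}$ of \eqref{eq:P1(w)}, so that $w^{\mathsf{T}}\Gamma(\bar{x}) \leq w^{\mathsf{T}}\Gamma(x)$ for every $x \in \mathcal{X}$, and suppose for contradiction that $\bar{x}$ is not a weak minimizer of \eqref{eq:P}. By the definition of $\wMin_C \Gamma(\mathcal{X})$, this means there exists $\tilde{x} \in \mathcal{X}$ such that $\Gamma(\tilde{x}) \in \{\Gamma(\bar{x})\} - \Int C$, equivalently $c \coloneqq \Gamma(\bar{x}) - \Gamma(\tilde{x}) \in \Int C$.

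Next I would establish the key auxiliary observation: for every $w \in C^+ \setminus \{0\}$ and every $c \in \Int C$, we have $w^{\mathsf{T}} c > 0$. To see this, note that $w^{\mathsf{T}} c \geq 0$ because $c \in C$ and $w \in C^+$. If we had $w^{\mathsf{T}} c = 0$, then since $c \in \Int C$ there exists $\delta > 0$ with $\mathbb{B}_\delta(c) \subseteq C$, so $c - \delta z \in C$ for every $z$ with $\norm{z} \leq 1$, and applying $w^{\mathsf{T}}(\cdot) \geq 0$ would yield $w^{\mathsf{T}} z \leq 0$ for every such $z$, forcing $w = 0$ and contradicting $w \neq 0$.

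Applying this to $c = \Gamma(\bar{x}) - \Gamma(\tilde{x}) \in \Int C$ gives $w^{\mathsf{T}}\Gamma(\tilde{x}) < w^{\mathsf{T}}\Gamma(\bar{x})$, which contradicts the optimality of $\bar{x}$ for \eqref{eq:P1(w)}. I do not anticipate a real obstacle here; the only subtle point is the separation-style argument showing strict positivity of $w^{\mathsf{T}} c$ on $\Int C$, which relies essentially on the pointedness of $C$ only via $w \neq 0$ and the openness of $\Int C$.
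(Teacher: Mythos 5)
Your proof is correct. The paper itself gives no argument for this proposition---it is quoted directly from Jahn (1984, Corollary~2.3)---and your contradiction argument via the strict positivity of $w^{\mathsf{T}}c$ for $w\in C^+\setminus\{0\}$ and $c\in\Int C$ is exactly the standard proof of that cited result. One small remark: your closing comment about pointedness is slightly misleading, since pointedness of $C$ plays no role anywhere in the argument (strict positivity on $\Int C$ needs only $w\neq 0$ and $\Int C\neq\emptyset$); it is solidity, not pointedness, that makes the notion of weak minimizer and this proof work.
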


\section{A norm-minimizing scalarization} \label{sec:scal}

In this section, we discuss a modification of the following norm-minimizing scalarization model, which is introduced in \cite{ararat2021norm} for CVOPs:
\begin{equation}\label{eq:P(v)}
	\text{minimize} \ \norm{z} \ \text{subject to} \ \Gamma(x) - z - v \leq_C 0, \ x \in \mathcal{X},\ z \in \R^q, \tag{NM$(v)$} 
\end{equation}
where $v\in\R^q$. Let us also provide the Lagrange dual of \eqref{eq:P(v)} below, see \cite[Section 4]{ararat2021norm} for details:
\begin{equation}\label{eq:D(v)}
	{\text{maximize} \inf_{x \in \mathcal{X}} w^{\mathsf{T}}\Gamma(x) - w^{\mathsf{T}}v \ \text{subject to} \ } \norm{w}_* \leq 1, w \in C^+. \tag{{dNM}$(v)$} 
\end{equation}

We modify \eqref{eq:P(v)} by adding another constraint as follows:
\begin{equation}\label{eq:P2(v)}
	\text{minimize} \ \norm{z} \ \text{subject to} \ \Gamma(x) - z - v \leq_C 0, \ \bar{w}^{\mathsf{T}}(v + z) \leq \gamma , \ x \in \mathcal{X},\ z \in \R^q, 
	\tag{P$(v)$}
\end{equation}
where $\bar{w} \in \Int C^+$ is assumed to be fixed, and $\gamma\in\R$ is a parameter. Note that this is a convex program as both the objective function and the feasible region are convex.

\begin{remark}\label{rem:dist}
	It is not difficult to observe that the original norm-minimizing scalarization \eqref{eq:P(v)} computes the distance $d(v,\mathcal{P})$ from the point $v$ to the upper image $\mathcal{P}$. On the other hand, the optimal value of \eqref{eq:P2(v)} may not be equal to $d(v,\mathcal{P})$. Indeed, it finds the distance $d(v, \mathcal{P} \cap S(\gamma))$, where
	\begin{align}\label{eq:Sdefn}
		S(\gamma) := \{ y \in \R^q \mid \bar{w}^{\mathsf{T}} y \leq \gamma\}.
	\end{align}
	Clearly, if $\gamma$ is not sufficiently large, then $\mathcal{P}\cap S(\gamma)$ may be the empty set, hence \eqref{eq:P2(v)} may become infeasible. The construction of a suitable choice of $\gamma$ is discussed in \cite[Section 6]{ararat2021norm} and it will be recalled later in \Cref{sec:alg2}.
\end{remark}

Note that \eqref{eq:P2(v)} can be equivalently written as 
\begin{equation*} \label{eq:Pprime}
	\text{minimize} \ f(x, z) \ \text{subject to} \ G(x, z) \cap -(C \times \R_+) \neq \emptyset, \  (x, z) \in X \times \mathbb{R}^q,      
	\tag{P$'({v})$}
\end{equation*}
where the scalar function $f\colon X \times \R^q \to \overline{\R}$ and the set-valued function $G : X \times \mathbb{R}^q \rightrightarrows \mathbb{R}^{q+1}$ are given by
\begin{align}\label{eq:fGdefn}
	f(x, z) := \norm{z} + I_{\mathcal{X}}(x), \: G(x, z) := \{ (\Gamma(x) - z - v, \bar{w}^{\mathsf{T}}(v + z) -{\gamma}) \}, \: x \in X, z \in \R^q.
\end{align}

Using the equivalent problem \eqref{eq:Pprime} and following similar steps for constructing the Lagrange dual \eqref{eq:D(v)} of \eqref{eq:P(v)} as in \cite[Section 4]{ararat2021norm}, the Lagrange dual of \eqref{eq:P2(v)} can be formulated as
\begin{equation}\label{eq:D2(v)} 
	\text{maximize~} \ \phi(w, \lambda) \ \text{~subject to~} \ w \in C^+, \ \lambda\geq 0, 
	\tag{D$(v)$} 
\end{equation} 
where the dual objective function $\phi\colon \R^{q+1}\to \overline{\R}$ is given by
\[
\phi(w, \lambda) := \inf_{x \in \mathcal{X}, z \in \R^q} \of{\norm{z} + w^{\mathsf{T}}(\Gamma(x) - z - v) + \lambda (\bar{w}^{\mathsf{T}}(v + z) - {\gamma})}.
\]
Then, the optimal value of \eqref{eq:D2(v)} can be written as
\begin{align}
	\sup_{(w, \lambda) \in C^+ \times \R_+} \phi(w, \lambda) = \sup \bigg\{ \inf_{x \in \mathcal{X}}  w^{\mathsf{T}}(\Gamma(x) - v) + \lambda (\bar{w}^{\mathsf{T}} v - {\gamma}) \mid  \norm{w - \lambda \bar{w}}_\ast \leq 1,\ (w, \lambda) \in C^+ \times \R_+\bigg\}, \notag
\end{align}
where we use the fact that the conjugate function of $\norm{\cdot}$ is the indicator function of the unit ball of the dual norm $\norm{\cdot}_\ast$; see \cite[Example 3.26]{boyd2004convex}. Next, we define $\tilde{w} := w - \lambda \bar{w}$ and rewrite the value of \eqref{eq:D2(v)} as 
\begin{align} \label{eq:pre_dualpr_eq}
	\sup \cb{ \inf_{x \in \mathcal{X}} {\tilde{w}^{\mathsf{T}}} (\Gamma(x) - v) + \lambda (\bar{w}^{\mathsf{T}} \Gamma(x) - {\gamma}) \mid \norm{\tilde{w}}_\ast \leq 1,\ \tilde{w} + \lambda \bar{w} \in C^+, \lambda \geq 0}.
\end{align}

We establish the strong duality between \eqref{eq:P2(v)} and \eqref{eq:D2(v)} next.

\begin{proposition}\label{prop:optsol}
	Let $\gamma\in\R$ be such that $\Gamma(\mathcal{X}) \subseteq \Int S(\gamma)$, where $S(\gamma)$ is given by \eqref{eq:Sdefn}.
	Then, for every $v \in \mathbb{R}^q$, there exist optimal solutions $(x^v, z^v)$ and $(w^v, \lambda^v)$ of problems \eqref{eq:P2(v)} and \eqref{eq:D2(v)}, respectively, and the optimal values coincide.  
\end{proposition}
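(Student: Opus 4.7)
The plan is to verify that the convex program \eqref{eq:P2(v)}, rewritten in the conic form \eqref{eq:Pprime}, satisfies the hypotheses of the standard Lagrange duality theorem, namely primal attainment together with Slater's constraint qualification. Dual attainment and zero duality gap then follow just as in the proof of the analogous result for \eqref{eq:P(v)} in \cite[Section~4]{ararat2021norm}.

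First, I would check primal feasibility and attainment. Fix any $x_0\in\mathcal{X}$; by the hypothesis $\Gamma(\mathcal{X})\subseteq\Int S(\gamma)$ one has $\bar{w}^{\mathsf{T}}\Gamma(x_0)<\gamma$, so the pair $(x_0,\Gamma(x_0)-v)$ satisfies $\Gamma(x_0)-(\Gamma(x_0)-v)-v=0\leq_C 0$ and $\bar{w}^{\mathsf{T}}(v+(\Gamma(x_0)-v))=\bar{w}^{\mathsf{T}}\Gamma(x_0)<\gamma$, hence is feasible. In particular, the optimal value $p^{\ast}\in[0,\|\Gamma(x_0)-v\|]$ is finite. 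Given a minimizing sequence $\{(x^k,z^k)\}_k$, compactness of $\mathcal{X}$ together with the coercivity of $\|\cdot\|$ (which forces $(z^k)_k$ to be bounded, since $\|z^k\|\to p^{\ast}$) yields a subsequence converging to some $(x^v,z^v)\in\mathcal{X}\times\R^q$. Continuity of $\Gamma$ and $\bar{w}^{\mathsf{T}}(\cdot)$, closedness of $C$, and continuity of $\|\cdot\|$ then imply that $(x^v,z^v)$ is feasible and $\|z^v\|=p^{\ast}$, i.e., it is an optimal solution of \eqref{eq:P2(v)}.

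Second, I would verify the generalized Slater condition. Using $\Int\mathcal{X}\neq\emptyset$, pick $x_1\in\Int\mathcal{X}$, so that $\bar{w}^{\mathsf{T}}\Gamma(x_1)<\gamma$. Choose any $\eta_0\in\Int C$ (nonempty since $C$ is solid) and set $\eta:=t\eta_0$ for $t>0$ small enough that $t\,\bar{w}^{\mathsf{T}}\eta_0<\gamma-\bar{w}^{\mathsf{T}}\Gamma(x_1)$; then $\eta\in\Int C$, and $z_1:=\Gamma(x_1)-v+\eta$ satisfies $\Gamma(x_1)-z_1-v=-\eta\in-\Int C$ and $\bar{w}^{\mathsf{T}}(v+z_1)=\bar{w}^{\mathsf{T}}\Gamma(x_1)+\bar{w}^{\mathsf{T}}\eta<\gamma$. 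Hence $-G(x_1,z_1)\in\Int C\times\R_{++}=\Int(C\times\R_+)$ with $x_1\in\Int\mathcal{X}$, which is exactly Slater's condition for \eqref{eq:Pprime}.

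With finiteness of the primal value, primal attainment, and Slater in hand, the standard convex conic Lagrange duality result (invoked exactly as in \cite[Section~4]{ararat2021norm}) yields zero duality gap and the existence of a dual optimal pair $(w^v,\lambda^v)\in C^+\times\R_+$ for \eqref{eq:D2(v)}, completing the argument. I expect the main delicate point to be the Slater step: the extra linear constraint $\bar{w}^{\mathsf{T}}(v+z)\leq\gamma$ requires strict feasibility to be produced simultaneously for the conic inequality and for the linear cut at a single point, and the hypothesis $\Gamma(\mathcal{X})\subseteq\Int S(\gamma)$ is precisely what provides the slack needed to do this.
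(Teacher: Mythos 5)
Your proposal is correct and follows essentially the same route as the paper: establish primal feasibility and attainment, verify a Slater-type constraint qualification using $\Int\mathcal{X}\neq\emptyset$ and $\Gamma(\mathcal{X})\subseteq\Int S(\gamma)$, and then invoke the standard convex duality theorem for strong duality and dual attainment. The only cosmetic difference is that you bound the minimizing sequence $(z^k)_k$ via the objective values, whereas the paper deduces compactness of the whole feasible region from the compactness of $\mathcal{P}\cap S(\gamma)$; both are valid.
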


\begin{proof}
	First, we show that there exists a feasible solution $(\tilde{x}, \tilde{z})$ of \eqref{eq:P2(v)}. 
	Fix $\tilde{x} \in \mathcal{X}$, which exists since $\Int\mathcal{X} \neq \emptyset$ is assumed. 
	Let us define $\tilde{z} := \Gamma(\tilde{x}) - v$. Then, $	v + \tilde{z} = \Gamma(\tilde{x}) \in \Int S(\gamma) \subseteq S(\gamma)$ and	$	v + \tilde{z} = \Gamma(\tilde{x}) \in \{\Gamma(\tilde{x})\} + C$, that is, $		\Gamma(\tilde{x}) \leq_C v + \tilde{z}$. Hence, $(\tilde{x}, \tilde{z})$ is feasible.
	
	Next, note that the feasible region	of \eqref{eq:P2(v)} is a subset of $\mathcal{X} \times (\mathcal{P} \cap S(\gamma) - \{v\}) \subseteq \R^{n + q}$. Moreover, $\mathcal{X}$ is compact by the problem definition and, using	\cite[Lemma 6.3]{ararat2021norm}, it is easy to see that $\mathcal{P} \cap S$ is compact as well. Hence, $\mathcal{X} \times (\mathcal{P} \cap S - \{v\})$ is a compact set, which implies that the feasible region of \eqref{eq:P2(v)} is bounded. To show that it is also closed, consider a sequence of feasible points $(x_n, z_n)_{n \in\N}$ such that $\lim_{n \rightarrow \infty}(x_n, z_n) = (x, z)$. We have $x \in \mathcal{X}$ as $\mathcal{X}$ is compact and $z \in S(\gamma) - v$ as $S(\gamma)$ is a closed halfspace. Since $\Gamma \colon X \rightarrow \R^q$ is continuous, $C$ is a closed cone and $v+z_n - \Gamma(x_n) \in C$ for every $n \in\N$, we obtain $\lim_{n \rightarrow \infty}(v + z_n - \Gamma(x_n)) = v + z - \Gamma(x) \in C$. Hence, an optimal solution $(x^*, z^*)$ of \eqref{eq:P2(v)} exists by Weierstrass theorem. 
	
	For strong duality, we show that the following constraint qualification in \cite{khan2016set, lohne2011vector} holds for \eqref{eq:P2(v)}:
	\begin{equation}\label{eq:CQset}
		G(\dom f) \cap -\Int (C \times \R_+) \neq \emptyset,
	\end{equation}
	where $f$ and $G$ are as in \eqref{eq:fGdefn}, and $\dom f\coloneqq\{(x,z)\in X\times\R^q\mid f(x,z)<+\infty\}=\mathcal{X}\times\R^q$. Since $\mathcal{X}$ has nonempty interior, there exists $x^0 \in$ int $\mathcal{X}$. Moreover, since $\Gamma(\mathcal{X}) \subseteq \Int S(\gamma)$, there exists $\epsilon > 0$ such that $B(\Gamma(x^0), \epsilon) \subseteq \Int S(\gamma)$. Let $c \in \Int C$ be fixed and define $y^0 \coloneqq \Gamma(x^0) + \frac{\epsilon}{2} \frac{c}{\norm{c}}$. Clearly, $y^0 \in B(\Gamma(x^0), \epsilon) \cap (\{\Gamma(x^0)\} + \Int C)$. Defining $z^0 \coloneqq y^0 - v$, we obtain $v + z^0 - \Gamma(x^0) \in \Int C$. Moreover, since $y^0 \in B(\Gamma(x^0), \epsilon) \subseteq \Int S(\gamma)$, we have
	$\bar{w}^{\mathsf{T}} (v + z^0) < \gamma$. Then,
	\begin{equation*}
		G(x^0, z^0) = \{ (\Gamma(x^0) - z^0 - v, \bar{w}^{\mathsf{T}} (v + z^0) -\gamma ) \} \subseteq - \Int (C \times \R_+). 
	\end{equation*}
	On the other hand, since $\dom f = \mathcal{X} \times \R^q$, we have $G(x^0, z^0) \subseteq G(\dom f)$. Hence, \eqref{eq:CQset} is satisfied.
	
	Recall that $\mathcal{X}\subseteq \R^n$ is convex and $\Gamma\colon X \to \R^q$ is a $C$-convex function. Then, by standard arguments, it can be shown that $f : X \times \R^q \rightarrow \mathbb{\overline{R}}$ is convex and $G \colon X \times \R^q \rightrightarrows \R^{q+1}$ is $(C \times \R_+)$-convex {as a set-valued function}. Together with \eqref{eq:CQset}, these imply strong duality and dual attainment by \cite[Theorem~3.19]{lohne2011vector}.
\end{proof}

\begin{notation}
	From now on, by $(x^v, z^v)$ and $(w^v, \lambda^v)$, we denote arbitrary optimal solutions of problems \eqref{eq:P2(v)} and \eqref{eq:D2(v)}, respectively. Moreover, we define $\tilde{w}^v \coloneqq w^v - \lambda^v \bar{w}$.
\end{notation}

The next lemma relates an optimal solution $(w^v,\lambda^v)$ of \eqref{eq:D2(v)} with the possible regions for the parameter $v$. Although the result is analogous to \cite[Lemma 4.5]{ararat2021norm}, it works under the additional assumption that $v\in S(\lambda)$.

\begin{lemma}\label{lem:vlemma}
	Let $\gamma\in\R$ be such that $\Gamma(\mathcal{X}) \subseteq \Int S(\gamma)$. Let $v \in S(\gamma)$. Then, the following statements hold:
	\begin{enumerate}[(a)]
		\item If $v\notin\mathcal{P}$, then $z^v\neq 0$, $w^v\neq 0$, and $\tilde{w}^v \neq 0$. 
		\item If $v\in\bd\mathcal{P}$, then $z^v=0$.
		\item If $v\in\Int\mathcal{P}$, then $z^v=0$ and $w^v=0$. 
	\end{enumerate}
	In particular, $v\in\mathcal{P}$ if and only if $z^v=0$.
\end{lemma}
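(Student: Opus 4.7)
The plan is to first observe that, as noted in \Cref{rem:dist}, the optimal value of \eqref{eq:P2(v)} equals $d(v, \mathcal{P} \cap S(\gamma))$. Indeed, the constraints $\Gamma(x) - z - v \leq_C 0$ and $\bar{w}^{\mathsf{T}}(v+z) \leq \gamma$ together force $y := v + z$ to lie in $\mathcal{P} \cap S(\gamma)$, and minimizing $\|z\|$ amounts to minimizing $\|y - v\|$ over this closed set. Because $v \in S(\gamma)$ by hypothesis and $\mathcal{P}$ is closed, $v \in \mathcal{P}$ if and only if $v \in \mathcal{P} \cap S(\gamma)$, if and only if $d(v,\mathcal{P}\cap S(\gamma)) = 0$, if and only if $z^v = 0$. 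This immediately delivers the ``in particular'' claim and settles the $z^v$ statements of all three parts.

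For the remaining claims, I would use strong duality (\Cref{prop:optsol}) together with the explicit form
\[
\phi(w, \lambda) = \inf_{x \in \mathcal{X}} \bigl( w^{\mathsf{T}}(\Gamma(x) - v) + \lambda(\bar{w}^{\mathsf{T}} v - \gamma) \bigr),
\]
valid when $\|w - \lambda \bar{w}\|_\ast \leq 1$, obtained by splitting the infimum defining $\phi$ and identifying the conjugate of $\|\cdot\|$ as the indicator of the unit $\|\cdot\|_\ast$-ball, exactly as in the derivation preceding \eqref{eq:pre_dualpr_eq}.

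For part (a), suppose $v \notin \mathcal{P}$, so $\|z^v\| > 0$. If $w^v = 0$, then $\phi(w^v, \lambda^v) = \lambda^v(\bar{w}^{\mathsf{T}} v - \gamma) \leq 0$ since $v \in S(\gamma)$, contradicting strong duality. If instead $\tilde{w}^v = 0$, rewriting $\phi$ as in \eqref{eq:pre_dualpr_eq} gives $\phi(w^v, \lambda^v) = \lambda^v \inf_{x \in \mathcal{X}}(\bar{w}^{\mathsf{T}} \Gamma(x) - \gamma)$. The subcase $\lambda^v = 0$ reduces to $w^v = 0$, already excluded; if $\lambda^v > 0$, then $\Gamma(\mathcal{X}) \subseteq \Int S(\gamma)$ together with compactness of $\mathcal{X}$ makes the infimum strictly negative, another contradiction.

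For part (c), suppose $v \in \Int \mathcal{P}$, so $\phi(w^v, \lambda^v) = \|z^v\| = 0$. Toward a contradiction, assume $w^v \neq 0$. Since $v \in \Int \mathcal{P}$, I would choose $\epsilon > 0$ small enough that $v - \epsilon w^v \in \mathcal{P}$, and write $v - \epsilon w^v = \Gamma(x') + c$ with $x' \in \mathcal{X}$, $c \in C$. Then $(w^v)^{\mathsf{T}}(\Gamma(x') - v) = -\epsilon (w^v)^{\mathsf{T}} w^v - (w^v)^{\mathsf{T}} c < 0$, using $(w^v)^{\mathsf{T}} w^v > 0$ and $(w^v)^{\mathsf{T}} c \geq 0$ (the latter because $w^v \in C^+$ and $c \in C$). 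Combined with $\lambda^v(\bar{w}^{\mathsf{T}} v - \gamma) \leq 0$, this yields $\phi(w^v, \lambda^v) < 0$, contradicting strong duality. The main subtlety I anticipate is the case analysis in (a) --- one must treat $w^v = 0$ and $\tilde{w}^v = 0$ separately --- and leveraging $\Gamma(\mathcal{X}) \subseteq \Int S(\gamma)$ together with compactness of $\mathcal{X}$ to promote the relevant infimum to a strict inequality.
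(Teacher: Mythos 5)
Your proof is correct and takes essentially the same route as the paper: the $z^v$ claims follow from feasibility (equivalently, the distance interpretation of \Cref{rem:dist}), while the $w^v\neq 0$, $\tilde{w}^v\neq 0$ and $w^v=0$ claims follow from strong duality, with your $\tilde{w}^v\neq 0$ argument coinciding with the one the paper writes out explicitly. The only difference is that you supply in full the steps the paper delegates to \cite[Lemma 4.5]{ararat2021norm}.
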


\begin{proof}
	By the feasibility of \eqref{eq:P2(v)} and strong duality, $z^v\neq 0$ and $w^v\neq 0$ in (a), see the proof of \cite[Lemma 4.5]{ararat2021norm} for details. For the other claim of (a), assume to the contrary that $w^v - \lambda^v \bar{w}=0$. From the optimal value of \eqref{eq:D2(v)} and strong duality, we get
	\begin{align}\notag
		\lambda^v \of{\inf_{x \in \mathcal{X}} \bar{w}^{\mathsf{T}}\Gamma(x) - \gamma} = \norm{z^v},
	\end{align}
	where $\lambda^v \geq 0$ and $\bar{w}^{\mathsf{T}} \Gamma(x) - \gamma \leq 0$ for any $x \in \mathcal{X}$, as $\Gamma(\mathcal{X}) \subseteq S(\gamma)$. Hence, $\norm{z^v} \leq 0$. This implies, $\norm{z^v} = 0$, a contradiction.
	
	To prove (b) and (c), note that we have $\bar{w}^\mathsf{T} v \leq \gamma$. Then, the assertion follows using the similar arguments as in the proof of \cite[Lemma 4.5]{ararat2021norm}.
\end{proof}

The following two propositions interpret the primal $(x^v, z^v)$ and the dual optimal solutions $(w^v, \lambda^v)$, that is, primal solution yields a weak minimizer for problem \eqref{eq:P} and dual solution provides a supporting hyperplane of $\mathcal{P} \cap S(\gamma)$ at $y^v = v +z^v$.

\begin{proposition}\label{prop:zv0}
	If $v \notin \Int \mathcal{P}$ and $v \in S(\gamma)$, 
	then $x^v$ is a weak minimizer of \eqref{eq:P}, and $y^v := v + z^v \in \wMin_C \mathcal{P}$.
\end{proposition}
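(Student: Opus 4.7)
The plan is to split into two cases according to \Cref{lem:vlemma}: either (i) $v\in\bd\mathcal{P}$, or (ii) $v\notin\mathcal{P}$.

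In case (i), \Cref{lem:vlemma}(b) gives $z^v=0$, so $y^v=v\in\bd\mathcal{P}$. Using that $\mathcal{P}=\mathcal{P}+C$, any $y\in\mathcal{P}$ admitting $c\in\Int C$ with $y-c\in\mathcal{P}$ must lie in $\Int\mathcal{P}$: choosing an open ball $B(c,\rho)\subseteq C$, the translate $B(y,\rho)=(y-c)+B(c,\rho)\subseteq\mathcal{P}+C=\mathcal{P}$ witnesses this. Hence $\bd\mathcal{P}\subseteq\wMin_C\mathcal{P}$ and in particular $y^v\in\wMin_C\mathcal{P}$. For $x^v$, primal feasibility with $z^v=0$ gives $\Gamma(x^v)\leq_C v$; if some $x\in\mathcal{X}$ satisfied $\Gamma(x)\in\Gamma(x^v)-\Int C$, then $v-\Gamma(x)\in C+\Int C\subseteq\Int C$ would place $\Gamma(x)\in(v-\Int C)\cap\mathcal{P}$, contradicting $v\in\wMin_C\mathcal{P}$; thus $x^v$ is a weak minimizer.

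In the main case (ii), \Cref{lem:vlemma}(a) guarantees $z^v\neq 0$, $w^v\neq 0$, and $\tilde{w}^v\neq 0$. The key step is to exploit the KKT structure implied by the strong duality in \Cref{prop:optsol}: the Lagrangian
\[
L(x,z)\coloneqq\|z\|+w^{v\mathsf{T}}(\Gamma(x)-z-v)+\lambda^v(\bar{w}^\mathsf{T}(v+z)-\gamma)=w^{v\mathsf{T}}\Gamma(x)+\bigl(\|z\|-\tilde{w}^{v\mathsf{T}}z\bigr)+\mathrm{const}
\]
separates in $x$ and $z$, so that its minimization over $\mathcal{X}\times\R^q$, which equals the dual value $\|z^v\|$ and is attained at $(x^v,z^v)$, forces $x^v\in\argmin_{x\in\mathcal{X}}w^{v\mathsf{T}}\Gamma(x)$. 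Since $w^v=\tilde{w}^v+\lambda^v\bar{w}\in C^+\setminus\{0\}$, \Cref{prop:jahn2009vector} immediately gives that $x^v$ is a weak minimizer of \eqref{eq:P}. Moreover, the two primal-feasibility-based nonpositive quantities $w^{v\mathsf{T}}(\Gamma(x^v)-z^v-v)\leq 0$ (because $w^v\in C^+$) and $\lambda^v(\bar{w}^\mathsf{T}(v+z^v)-\gamma)\leq 0$ (because $\lambda^v\geq 0$ and $y^v\in S(\gamma)$) must each vanish, as their sum equals $L(x^v,z^v)-\|z^v\|=0$; the vanishing of the first rewrites as the supporting identity $w^{v\mathsf{T}} y^v=w^{v\mathsf{T}}\Gamma(x^v)$.

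Combining this identity with $\mathcal{P}=\Gamma(\mathcal{X})+C$ and $w^v\in C^+$ yields $w^{v\mathsf{T}} y^v=w^{v\mathsf{T}}\Gamma(x^v)=\inf_{y\in\mathcal{P}}w^{v\mathsf{T}} y$, so $w^v$ supports $\mathcal{P}$ at $y^v$. If some $y\in\mathcal{P}$ satisfied $y\in y^v-\Int C$, the standard fact that $w^{v\mathsf{T}}c>0$ whenever $w^v\in C^+\setminus\{0\}$ and $c\in\Int C$ would give $w^{v\mathsf{T}} y<w^{v\mathsf{T}} y^v$, contradicting the supporting halfspace; hence $y^v\in\wMin_C\mathcal{P}$. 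The main obstacle is the Lagrangian bookkeeping in case (ii): cleanly isolating both the separability-based $\argmin$ identity and the complementary-slackness-based supporting identity, while keeping track of $w^v$ versus $\tilde{w}^v=w^v-\lambda^v\bar{w}$.
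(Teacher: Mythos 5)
Your proof is correct, but for the second claim it takes a genuinely different route from the paper's. The paper proves $y^v\in\wMin_C\mathcal{P}$ by a primal contradiction: assuming $y^v\in\Int\mathcal{P}$, it retracts $y^v$ toward $v$ by a small $\epsilon$ along $z^v/\norm{z^v}$ to obtain a feasible point $(\bar x,\bar z)$ of \eqref{eq:P2(v)} with $\norm{\bar z}<\norm{z^v}$, and most of its effort goes into verifying that the retracted point still satisfies the additional constraint $\bar w^{\mathsf{T}}(v+\bar z)\le\gamma$. You instead read off from strong duality and the separable Lagrangian both complementary slackness and the fact that $x^v$ minimizes $x\mapsto (w^v)^{\mathsf{T}}\Gamma(x)$ over $\mathcal{X}$, so that $w^v\in C^+\setminus\{0\}$ supports $\mathcal{P}$ at $y^v$, and weak minimality follows from $(w^v)^{\mathsf{T}}c>0$ for $c\in\Int C$. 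This is in effect a re-derivation, with $w^v$ in place of $\tilde w^v$, of the supporting-halfspace statement the paper proves separately in \Cref{prop:supp_halfspace}; it buys a cleaner argument with no perturbation and no verification of the $S(\gamma)$ constraint, at the price of treating the case $v\in\bd\mathcal{P}$ separately (there $w^v$ may vanish, and you correctly fall back on the elementary facts $\bd\mathcal{P}\subseteq\wMin_C\mathcal{P}$ and $C+\Int C\subseteq\Int C$; the paper's perturbation argument absorbs this case silently, since $z^v=0$ makes the contradiction immediate). For the first claim the two approaches agree in spirit: the paper defers to \cite{ararat2021norm}, whose argument is likewise built on the dual optimal solution and \Cref{prop:jahn2009vector}. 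Two cosmetic omissions you may want to make explicit: $y^v\in\mathcal{P}$ (from primal feasibility, $y^v\in\{\Gamma(x^v)\}+C$), which is needed both to speak of support at $y^v$ and in the definition of $\wMin_C\mathcal{P}$; and $\norm{\tilde w^v}_\ast\le 1$, which is what makes $\inf_z(\norm{z}-(\tilde w^v)^{\mathsf{T}}z)$ finite and hence legitimizes the separability step.
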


\begin{proof}
	By using similar arguments as in the proof of \cite[Proposition 4.6]{ararat2021norm}, $x^v$ is a weak minimizer of \eqref{eq:P}. For the second claim, first note that $y^v \in \mathcal{P}$ since $(x^v,z^v)$ is feasible for \eqref{eq:P2(v)}. To get a contradiction, assume that $y^v \notin \wMin_C \mathcal{P}$; hence, $y^v = v + z^v  \in \Int \mathcal{P}$ with $\norm{z^v} \neq 0$ since $v \notin \Int \mathcal{P}$ by our assumption. Then, there exists
	$0 < \epsilon \leq \norm{z^v}$ such that
	\begin{align}\notag
		v + z^v - \epsilon\frac{z^v}{\norm{z^v}} \in \mathcal{P},
	\end{align}
	which implies the existence of $\bar{x} \in \mathcal{X}$ with
	\begin{align}\notag
		v + (\norm{z^v} - \epsilon) \frac{z^v}{\norm{z^v}} \in \{\Gamma(\bar{x})\} + C.
	\end{align}
	Let $\bar{z} := (\norm{z^v} - \epsilon) \frac{z^v}{\norm{z^v}}$. We show that $\bar{w}^\mathsf{T} (v + \bar{z}) \leq \gamma$. Assume otherwise that
	\begin{align}\label{eq:y_wMin}
		\bar{w}^\mathsf{T} (v + \bar{z}) =	\bar{w}^\mathsf{T} \of{v + z^v - \epsilon\frac{z^v}{\norm{z^v}}} > \gamma.
	\end{align}
	Since $\bar{w}^\mathsf{T} (v + z^v) \leq \gamma$ by the feasibility of \eqref{eq:P2(v)},
	\begin{align}\notag
		- \epsilon\frac{\bar{w}^\mathsf{T} z^v}{\norm{z^v}} > \gamma - \bar{w}^\mathsf{T} (v + z^v) \geq \gamma - \gamma = 0.
	\end{align}
	This implies $\bar{w}^\mathsf{T} z^v < 0$. We also know that $\bar{w}^\mathsf{T} v \leq \gamma$ as $v \in S(\gamma)$. Then, from \eqref{eq:y_wMin}, we get
	\begin{align}\notag
		\bar{w}^\mathsf{T} z^v \of{1 - \frac{\epsilon}{\norm{z^v}}} > \gamma - \bar{w}^\mathsf{T} v \geq \gamma - \gamma = 0.
	\end{align}
	This implies, $1 - \frac{\epsilon}{\norm{z^v}} < 0$, that is, $\norm{z^v} < \epsilon$. This contradicts with the upper bound of $\epsilon$.
	
	Now, $(\bar{x}, \bar{z})$ is feasible for \eqref{eq:P2(v)}. But this gives $\norm{\bar{z}} < \norm{z^v}$, which is a contradiction to the optimality of $(x^v,z^v)$. 
\end{proof}

\begin{proposition}\label{prop:supp_halfspace}
	Suppose that $\tilde{w}^v \neq 0$. Let 
	$v \in S(\gamma)$ and $\Gamma(\mathcal{X}) \subseteq \Int S(\gamma)$. Then,
	\begin{align}\notag
		\mathcal{H}(\tilde{w}^v, y^v) = \{y \in \mathbb{R}^q \mid (\tilde{w}^v)^{\mathsf{T}}y \geq (\tilde{w}^v)^{\mathsf{T}} y^v \}
	\end{align}
	is a supporting halfspace of $\mathcal{P}\cap S(\gamma)$ at $y^v=v+z^v$.	
\end{proposition}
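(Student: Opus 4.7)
The plan is to verify directly the two defining properties of a supporting halfspace of $\mathcal{P}\cap S(\gamma)$ at $y^v=v+z^v$: namely, that $y^v$ belongs to $\mathcal{P}\cap S(\gamma)$, and that $(\tilde{w}^v)^\mathsf{T} y\geq(\tilde{w}^v)^\mathsf{T} y^v$ for every $y\in\mathcal{P}\cap S(\gamma)$. Membership is immediate from primal feasibility of $(x^v,z^v)$ for \eqref{eq:P2(v)}: the constraint $\Gamma(x^v)\leq_C v+z^v$ places $y^v$ in $\{\Gamma(x^v)\}+C\subseteq\mathcal{P}$, while $\bar{w}^\mathsf{T}(v+z^v)\leq\gamma$ places $y^v$ in $S(\gamma)$. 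The substantive content is the inequality.

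For this, I would exploit the saddle-point characterization of the optimal primal-dual pair $(x^v,z^v,w^v,\lambda^v)$ whose existence is provided by \Cref{prop:optsol}. Minimizing the Lagrangian $(x,z)\mapsto\norm{z}+(w^v)^\mathsf{T}(\Gamma(x)-z-v)+\lambda^v(\bar{w}^\mathsf{T}(v+z)-\gamma)+I_\mathcal{X}(x)$ separately in $z$ and in $x$ at $(x^v,z^v)$ yields two facts I will need: first, from the $z$-minimization, $\tilde{w}^v$ lies in the subdifferential of $\norm{\cdot}$ at $z^v$, which gives the Fenchel equality $(\tilde{w}^v)^\mathsf{T} z^v=\norm{z^v}$ (consistent with $\norm{\tilde{w}^v}_\ast\leq 1$ from dual feasibility); second, from the $x$-minimization, $x^v$ is an optimal solution of \eqref{eq:P1(w)} with weight $w^v\in C^+$, so $(w^v)^\mathsf{T}\Gamma(x^v)\leq(w^v)^\mathsf{T}\Gamma(x)$ for every $x\in\mathcal{X}$. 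Combining these with strong duality $\norm{z^v}=\phi(w^v,\lambda^v)$ and the splitting $w^v=\tilde{w}^v+\lambda^v\bar{w}$ will produce the key identity
\[
(w^v)^\mathsf{T}\Gamma(x^v)-\lambda^v\gamma \;=\; \norm{z^v}+(\tilde{w}^v)^\mathsf{T} v \;=\; (\tilde{w}^v)^\mathsf{T} y^v.
\]

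With these ingredients in hand, the inequality follows from a short chain of bounds. For arbitrary $y\in\mathcal{P}\cap S(\gamma)$, I would write $y=\Gamma(x)+c$ for some $x\in\mathcal{X}$ and $c\in C$ and expand
\[
(\tilde{w}^v)^\mathsf{T} y \;=\; (w^v)^\mathsf{T} y - \lambda^v \bar{w}^\mathsf{T} y \;\geq\; (w^v)^\mathsf{T}\Gamma(x)+(w^v)^\mathsf{T} c - \lambda^v\gamma \;\geq\; (w^v)^\mathsf{T}\Gamma(x^v) - \lambda^v\gamma,
\]
using in succession $\lambda^v\geq 0$ together with $\bar{w}^\mathsf{T} y\leq\gamma$, then $w^v\in C^+$ with $c\in C$, and finally the weighted-sum optimality of $x^v$; the key identity from the previous paragraph then matches the last quantity with $(\tilde{w}^v)^\mathsf{T} y^v$. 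The main bookkeeping task is to correctly assemble the Fenchel and strong-duality pieces into that identity; the hypothesis $\tilde{w}^v\neq 0$ is used only to ensure that $\mathcal{H}(\tilde{w}^v,y^v)$ is a genuine halfspace rather than all of $\R^q$.
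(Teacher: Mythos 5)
Your proof is correct and follows essentially the same route as the paper's: both derive $\mathcal{P}\cap S(\gamma)\subseteq\mathcal{H}(\tilde{w}^v,y^v)$ from the Lagrange dual \eqref{eq:D2(v)} via strong duality and obtain $y^v\in\mathcal{P}\cap S(\gamma)$ from primal feasibility of $(x^v,z^v)$. The only cosmetic difference is that the paper works with one-sided bounds throughout (it uses $\inf_{x\in\mathcal{X}}(w^v)^{\mathsf{T}}\Gamma(x)\leq(w^v)^{\mathsf{T}}\Gamma(x)$ for the $x$ representing $y$, and $\norm{z^v}\geq(\tilde{w}^v)^{\mathsf{T}}z^v$ from $\norm{\tilde{w}^v}_\ast\leq 1$), so it never needs the saddle-point equalities, complementary slackness, or the Fenchel identity that you invoke.
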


\begin{proof}
	Let us fix $y \in \mathcal{P} \cap S(\gamma)$, consider the problems \eqref{eq:P2(v)} and \eqref{eq:D2(v)}. Then,
	\begin{align}\notag
		0 \geq \inf_{x \in \mathcal{X}} (w^v)^{\mathsf{T}}(\Gamma(x) - y) + \lambda^v (\bar{w}^{\mathsf{T}} y - \gamma);
	\end{align}
	see the proof of \cite[Proposition 4.7]{ararat2021norm}. This gives
	\begin{align}\label{eq:4}
		(w^v)^{\mathsf{T}}y - \lambda^v \bar{w}^{\mathsf{T}} y \geq \inf_{x \in \mathcal{X}} (w^v)^{\mathsf{T}}\Gamma(x) - \lambda^v \gamma. 
	\end{align}
	Using the strong duality between \eqref{eq:P2(v)} and \eqref{eq:D2(v)}, we get
	\begin{align}\label{eq:5}
		\norm{z^v}  &= \inf_{x \in \mathcal{X}} (w^v)^{\mathsf{T}}(\Gamma(x) - v) + \lambda^v (\bar{w}^{\mathsf{T}} v - \gamma) \\ \notag
		&= \inf_{x \in \mathcal{X}} (w^v)^{\mathsf{T}}\Gamma(x) - \lambda^v \gamma - (w^v)^{\mathsf{T}} v + \lambda^v \bar{w}^{\mathsf{T}} v.
	\end{align}
	From \eqref{eq:4} and \eqref{eq:5}, we obtain
	\begin{align}\notag
		(w^v)^{\mathsf{T}}y - \lambda^v \bar{w}^{\mathsf{T}} y \geq \norm{z^v} + (w^v)^{\mathsf{T}}v - \lambda^v \bar{w}^{\mathsf{T}} v,
	\end{align}
	that is,
	\begin{align}\notag
		(\tilde{w}^v)^{\mathsf{T}} y \geq \norm{z^v} + (\tilde{w}^v)^{\mathsf{T}} v \geq (\tilde{w}^v)^\mathsf{T}z^v + (\tilde{w}^v)^{\mathsf{T}} v = (\tilde{w}^v)^{\mathsf{T}} y^v,
	\end{align}
	where the second inequality is a consequence of the definition of dual norm and the dual constraint $\norm{\tilde{w}^v}_* \leq 1$. Hence, $y\in \mathcal{H}(\tilde{w}^v, y^v)$. Since $y$ is arbitrary, this implies $\mathcal{P} \cap S(\gamma) \subseteq \mathcal{H}(\tilde{w}^v, y^v)$.
	
	Next, by \Cref{prop:zv0}, we have $y^v \in \mathcal{P}$; in particular, $y^v \in \mathcal{P} \cap S(\gamma)$ by the feasibility of \eqref{eq:P2(v)}. We have $y^v \in \bd \mathcal{H}(\tilde{w}^v, y^v)$ by the definition of $\mathcal{H}(\tilde{w}^v, y^v) $. Hence, $y^v \in \mathcal{P} \cap S(\gamma) \cap \bd \mathcal{H}(\tilde{w}^v, y^v)$, which completes the proof. 
\end{proof}

\section{The algorithm} \label{sec:alg2}

In this section, we propose an algorithm for solving \Cref{problem} whose finiteness will be proved in \Cref{sec:fin}, followed by its convergence rate in \Cref{sec:conv}. \Cref{alg1} is an outer approximation algorithm which has a similar structure as Algorithm 2 in \cite{ararat2021norm}. It consists of two phases. The first one is an initialization phase which computes a compact initial outer approximation $\bar{\mathcal{P}}^{\out}_0$ using the original norm-minimizing scalarization \eqref{eq:P(v)} and its dual \eqref{eq:D(v)}. In the second phase, the current outer approximation is refined in each iteration via a supporting halfspace that is calculated by solving the modified norm-minimizing scalarization \eqref{eq:P2(v)} and its dual \eqref{eq:D2(v)}. The algorithm works under the following assumption.

\begin{assumption} \label{assmp:C_poly}
	The ordering cone $C$ is polyhedral. 
\end{assumption}

\Cref{assmp:C_poly} ensures that the dual cone $C^+$ is also polyhedral. 
Letting $w^1, \ldots,$ $w^J$ be the generating vectors of $C^+$ with $J \in \N$, we have $C^+ = \conv \cone \{w^1,\ldots,w^J\}$. Note that $C^+$ is solid as $C$ is assumed to be pointed, see \cite[Propostion 2.4.3]{facchinei2007finite}. In particular, $J \geq q$. Without loss of generality, we assume that $\|w^j\|_\ast = 1$ for each $j\in \{1,\ldots, J\}$. 

The initialization phase starts by solving the scalarizations (WS$(w^j)$), $j\in \{1,\ldots,J\}$: an optimal solution $x^j$ of (WS$(w^j)$) exists by the assumptions of \Cref{problem}, and $x^j$ is a weak minimizer of \eqref{eq:P} by \Cref{prop:jahn2009vector}. This gives the set ${\mathcal{X}}_0 \coloneqq \{x^1,\ldots,x^J\}$ of weak minimizers. We define 
\begin{equation}\label{eq:P_0}
	\mathcal{P}^{\out}_0 \coloneqq \bigcap_{j=1}^J \{y \in \mathbb{R}^q \mid (w^j)^{\mathsf{T}}y \geq (w^j)^{\mathsf{T}} \Gamma(x^j) \}.
\end{equation}
Note that $\mathcal{P}^{\out}_0 \supseteq \mathcal{P}$, see \cite[Section 5]{ararat2021norm}. Then, $\mathcal{P}^{\out}_0$ is further intersected with the halfspace $S(\gamma)=\{ y\in\R^q \mid \bar{w}^{\mathsf{T}}y \leq \gamma\}$ (see \eqref{eq:Sdefn}), where $\bar{w}\in \Int C^+$, $\gamma\in\R$ are fixed such that
\begin{equation}\label{eq:gamma}
	\bar{w} \coloneqq \frac{\sum_{j = 1}^{J} w^j}{\norm{\sum_{j = 1}^{J} w^j}_\ast},\quad \gamma > \sup_{x \in \mathcal{X}} \bar{w}^{\mathsf{T}} \Gamma(x) + \max_{v \in \ext\mathcal{P}_0^{\out}} (\bar{w}^{\mathsf{T}} v - \beta)^+ + \delta^H(\mathcal{P}^{\out}_0, \mathcal{P}).
\end{equation}
Here, $\beta \geq \sup_{x\in\mathcal{X}}\bar{w}^{\mathsf{T}} \Gamma(x)$ is a constant and $a^+\coloneqq\max\{a,0\}$ for $a\in\R$. The calculation of $\gamma$ is discussed in \cite[Section 6]{ararat2021norm} in detail. \eqref{eq:gamma} ensures that $\Gamma(\mathcal{X})\subseteq \mathcal{P}\cap S(\gamma)$. Finally, we obtain the initial outer approximation of $\mathcal{P}\cap S(\gamma)$ as $\bar{\mathcal{P}}^\out_0 := \mathcal{P}^\out_0 \cap S(\gamma)$, which is compact by \cite[Lemma 6.3]{ararat2021norm}. 

Next, in the second phase of the algorithm, the set $\ext \bar{\mathcal{P}}^\out_k$ of all vertices of $\bar{\mathcal{P}}^\out_k$ is computed, $k\geq 0$. Then, for each $v \in \ext \bar{\mathcal{P}}^\out_k$, optimal solutions $(x^v, z^v)$ and $(w^v, \lambda^v)$ are obtained by solving the modified norm-minimizing scalarization \eqref{eq:P2(v)} and its dual \eqref{eq:D2(v)}, respectively. If $\|z^v\| \leq \epsilon$, where $\epsilon>0$ is a predetermined approximation error, then the set $\mathcal{X}_k$ of weak minimizers is updated with $x^v$, see \Cref{prop:zv0}. To find one of the farthest vertices to the set $\mathcal{P} \cap S(\gamma)$, we compute
\begin{equation}\label{eq:zmax}
	v^k \in \argmax \{\norm{z^v} \mid v \in \ext \bar{\mathcal{P}}^\out_k\}.
\end{equation}
If $\|{z^{v^{k}}}\| > \epsilon$, then the supporting halfspace of $\mathcal{P}$ at $y^{v^{k}}$, namely,
\begin{equation}\label{eq:Hk}
	\mathcal{H}_k\coloneqq \mathcal{H}(\tilde{w}^{v^{k}}, y^{v^{k}}) =\{y \in \mathbb{R}^q \mid (\tilde{w}^{v^{k}})^{\mathsf{T}}y \geq (\tilde{w}^{v^{k}})^{\mathsf{T}} y^{v^{k}} \}
\end{equation}
used to update the current outer approximation as $\bar{\mathcal{P}}^\out_{k+1} \coloneqq \bar{\mathcal{P}}^\out_{k} \cap \mathcal{H}_k$; see \Cref{prop:supp_halfspace}. Otherwise, the algorithm terminates.

\setcounter{algo}{1}
\begin{algorithm}
	\caption{Outer Approximation Algorithm for \eqref{eq:P}}
	\label{alg1}
	\algsetup{
		linenosize=\small,
		linenodelimiter=.
	}
	\begin{algorithmic}[1]
		\STATE Compute an optimal solution $x^j$ of (WS$(w^j)$) for each $j \in \{1,\ldots,J\}$;
		\STATE $k = 0, \bar{\mathcal{{X}}}_0 = \{x^1,\dots,x^J\}, \mathcal{V}^{\text{known}} = \emptyset, \mathcal{V}^{\text{known2}} = \emptyset$;
		\STATE Store an $H$-representation of $\mathcal{P}^{\out}_0$ according to \eqref{eq:P_0};
		\STATE Compute $\ext \mathcal{P}^\out_0$ from the $H$-representation of $\mathcal{P}^\out_0$;
		\STATE Compute $\gamma$ by \eqref{eq:gamma} using \cite[Remark 6.1]{ararat2021norm};
		\STATE $\bar{\mathcal{P}}^\out_{0} = \mathcal{P}^\out_0 \cap S(\gamma)$;
		\REPEAT
		\STATE Stop $\gets \TRUE$;
		\STATE Compute $\ext \bar{\mathcal{P}}^\out_k$ from the $H$-representation of $\bar{\mathcal{P}}^\out_k$;
		\FOR{$v \in \ext \bar{\mathcal{P}}^\out_k \setminus \mathcal{V}^{\text{known}}$} 
		\IF {$v \notin \mathcal{V}^{\text{known2}}$}
		\STATE Solve \eqref{eq:P2(v)} and \eqref{eq:D2(v)} to compute $(x^v,z^v)$ and $(w^v, \lambda^v)$;
		\ENDIF
		\IF{$\norm{z^v} \leq \epsilon$}
		\STATE $\bar{\mathcal{X}}_{k} \gets \bar{\mathcal{X}}_{k}\cup \{x^v\}$, $\mathcal{V}^{\text{known}} \gets \mathcal{V}^{\text{known}} \cup \{v\}$;
		\ELSE
		\STATE $\mathcal{V}^{\text{known2}} \gets \mathcal{V}^{\text{known2}} \cup \{v\}$;
		\ENDIF
		\ENDFOR
		\STATE Compute $v^k \in  \argmax \{\norm{z^v} \mid v \in \ext \bar{\mathcal{P}}^\out_k\}$;
		\IF{$\|{z^{v^{k}}}\| > \epsilon$}
		\STATE $\bar{\mathcal{P}}^\out_{k+1} = \bar{\mathcal{P}}^\out_k \cap \mathcal{H}_k$, $\bar{\mathcal{X}}_{k+1} = \bar{\mathcal{X}}_k$;
		\STATE $k \leftarrow k + 1$;
		\STATE Stop $\gets \FALSE$;
		\ENDIF
		\UNTIL{Stop}
		\RETURN		 
		$ \begin{cases} 
			\bar{\mathcal{{X}}}_k &: \text{A finite weak $\epsilon$-solution to \eqref{eq:P}}; \\
			\bar{\mathcal{P}}^\out_k &: \text{An outer approximation of~} \mathcal{P} \cap S.\\
		\end{cases}$
	\end{algorithmic}
\end{algorithm}

\begin{remark}\label{rem:Scontains} 
	By the definition of $S(\gamma)$, we have $\Gamma(\mathcal{X})\subseteq S(\gamma)$. Since $\bar{\mathcal{P}}^\out_0 \supseteq \mathcal{P} \cap S(\gamma)$ and $\bar{\mathcal{P}}^\out_{k+1} = \bar{\mathcal{P}}^\out_k\cap\mathcal{H}_k$, we have $\bar{\mathcal{P}}^\out_k \supseteq \mathcal{P} \cap S(\gamma) \supseteq \Gamma(\mathcal{X})$ for every $k\geq 0$. Then, using \cite[Remark 3.2]{ararat2021norm}, we obtain	$\mathcal{P} = \Gamma(\mathcal{X}) + C \subseteq \bar{\mathcal{P}}^\out_k + C$. Hence, $\bar{\mathcal{P}}^\out_k + C$ gives an outer approximation of $\mathcal{P}$. 
\end{remark}

\begin{remark}\label{rem:Pequal}
	We have $	\mathcal{P} = (\mathcal{P} \cap S(\gamma)) + C.$ Indeed, $\mathcal{P} = \Gamma(\mathcal{X}) + C \subseteq (\mathcal{P} \cap S(\gamma)) + C$ as $\Gamma(\mathcal{X})\subseteq S(\gamma)$. The other inclusion is by $(\mathcal{P} \cap S(\gamma)) + C \subseteq \mathcal{P} + C = \mathcal{P}$.
\end{remark}

\begin{lemma}\label{lem:vintP}
	For every $k\geq 0$ and $v\in \ext \bar{\mathcal{P}}^\out_k$, we have $v \notin \Int \mathcal{P}$.
\end{lemma}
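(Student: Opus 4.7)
The plan is to argue by contradiction: suppose there exists $v \in \ext \bar{\mathcal{P}}^\out_k$ with $v \in \Int \mathcal{P}$. The goal is to produce two distinct points $v^+, v^- \in \bar{\mathcal{P}}^\out_k$ with $v = \tfrac{1}{2}(v^+ + v^-)$, which contradicts $v \in \ext \bar{\mathcal{P}}^\out_k$.

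I would assemble three ingredients. First, from $v \in \Int \mathcal{P}$, pick $r > 0$ with $\mathbb{B}_r(v) \subseteq \mathcal{P}$. Second, from $v \in \bar{\mathcal{P}}^\out_k \subseteq S(\gamma)$, infer $\bar{w}^{\mathsf{T}} v \leq \gamma$. Third, invoke \Cref{rem:Scontains} to get the outer-approximation containment $\mathcal{P} \cap S(\gamma) \subseteq \bar{\mathcal{P}}^\out_k$.

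The construction is to perturb $v$ along the hyperplane through $v$ that is orthogonal to $\bar{w}$. Concretely, for any $u \in \R^q$ with $\bar{w}^{\mathsf{T}} u = 0$ and $\norm{u} \leq r$, the points $v \pm u$ lie in $\mathbb{B}_r(v) \subseteq \mathcal{P}$ and satisfy $\bar{w}^{\mathsf{T}}(v \pm u) = \bar{w}^{\mathsf{T}} v \leq \gamma$, so $v \pm u \in \mathcal{P} \cap S(\gamma) \subseteq \bar{\mathcal{P}}^\out_k$. Since $\bar{w} \neq 0$, the linear subspace $\{u \in \R^q \mid \bar{w}^{\mathsf{T}} u = 0\}$ has dimension $q - 1 \geq 1$ (in the nondegenerate CVOP setting $q \geq 2$), so I can pick a nonzero $u$ and rescale to $\norm{u} = r$. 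Then $v^+ := v + u$ and $v^- := v - u$ are distinct elements of $\bar{\mathcal{P}}^\out_k$ with midpoint $v$, yielding the desired contradiction.

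There is no substantive obstacle beyond one conceptual subtlety: the worrisome subcase is $\bar{w}^{\mathsf{T}} v = \gamma$, where $v$ lies on $\bd S(\gamma)$ and is no longer an interior point of $\mathcal{P} \cap S(\gamma)$; one might fear that this permits $v$ to remain a vertex of $\bar{\mathcal{P}}^\out_k$. The perturbation above handles this case uniformly with the easier case $\bar{w}^{\mathsf{T}} v < \gamma$, because the chosen direction $u$ is tangent to $\bd S(\gamma)$ and so keeps $v \pm u$ inside both $\mathcal{P}$ (by $\norm{u} \leq r$) and $S(\gamma)$ (by $\bar{w}^{\mathsf{T}} u = 0$) simultaneously.
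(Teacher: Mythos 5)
Your proof is correct, and it is a genuinely streamlined version of the paper's argument. The key construction is the same in both: exhibit $v$ as the midpoint of two distinct points of $\mathcal{P}\cap S(\gamma)\subseteq \bar{\mathcal{P}}^\out_k$ obtained by perturbing along a direction parallel to $\bd S(\gamma)$, contradicting extremality. The difference is in how you get there. The paper runs an induction on $k$ (treating $k=0$ separately via $\ext\bar{\mathcal{P}}^\out_0\subseteq\bd\mathcal{P}^\out_0$), locates $v$ on a supporting hyperplane $\bd\mathcal{H}_{\bar{k}}$ of $\mathcal{P}\cap S(\gamma)$, deduces from $v\notin\Int(\mathcal{P}\cap S(\gamma))$ that $v$ must lie on $\bd S(\gamma)$, and only then performs the tangential perturbation. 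You observe that the perturbation in a direction $u$ with $\bar{w}^{\mathsf{T}}u=0$ works regardless of whether $\bar{w}^{\mathsf{T}}v=\gamma$ or $\bar{w}^{\mathsf{T}}v<\gamma$, so the induction and the case analysis are unnecessary: all you need are $v\in\Int\mathcal{P}$, $v\in S(\gamma)$, and the containment $\mathcal{P}\cap S(\gamma)\subseteq\bar{\mathcal{P}}^\out_k$ from \Cref{rem:Scontains} (which is established independently of this lemma, so there is no circularity). Your explicit flagging of the requirement $q\geq 2$ for the tangent hyperplane to contain a nonzero direction is apt; the paper's proof implicitly relies on the same fact both in its base case and in its choice of $v'\in\bd S(\gamma)\setminus\{v\}$. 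What your route buys is brevity and uniformity; what the paper's route buys is essentially nothing extra here, so your version could replace it.
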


\begin{proof}
	Clearly, $\bd\mathcal{P}^\out_0 \subseteq \R^q \setminus \Int \mathcal{P} $ since $\mathcal{P}^\out_0 \supseteq \mathcal{P}$. Then, for every $v \in \ext\bar{\mathcal{P}}^\out_0 \subseteq \bd \mathcal{P}^\out_0$, we have $v \notin \Int \mathcal{P}$. 
	
	Assume that $k\geq 1$ is the first iteration number for which there exists $v\in \ext \bar{\mathcal{P}}^\out_k$ such that $v \in \Int \mathcal{P}$. Note that $\ext \bar{\mathcal{P}}^\out_k \subseteq S(\gamma)$ by the construction of the algorithm. Since $v$ is a vertex of $\bar{\mathcal{P}}^\out_k$, it must be true that $v\in\bd \mathcal{H}_{\bar{k}}$ for some $\bar{k}\leq k$. Moreover, $\bd \mathcal{H}_{\bar{k}}$ is a supporting hyperplane of the closed set $\mathcal{P} \cap S(\gamma)$. Hence,
	\begin{align}\notag 
		v \notin \Int (\mathcal{P} \cap S(\gamma)) = \Int \mathcal{P} \cap \Int S(\gamma).
	\end{align}
	Then, we must have $v\in \bd S(\gamma)$ since $\ext \bar{\mathcal{P}}^\out_k \subseteq S(\gamma)$. As $v \in \Int \mathcal{P}$, there exists $\delta > 0$ such that $\mathbb{B}_\delta(v) \subseteq \Int \mathcal{P}$. Clearly, $\mathbb{B}_\delta(v) \cap \bd S(\gamma) \neq \emptyset$. Let us fix $v' \in \bd S(\gamma)$, where $v' \neq v$, and define
	\begin{align}\notag 
		s \coloneqq \delta\frac{v - v'}{\norm{v - v'}}.
	\end{align}
	Then, $v \pm s$ is an affine combination of $v, v' \in \bd S(\gamma)$. Since $\bd S(\gamma)$ is an affine set, we get $v \pm s \in \bd S(\gamma)$. This implies that
	\begin{align}\notag 
		v \pm s \in \Int \mathcal{P} \cap \bd S(\gamma) \subseteq \mathcal{P} \cap S(\gamma) \subseteq \bar{\mathcal{P}}^\out_k.
	\end{align}
	This implies that $v$ can be written as a convex combination of $v \pm s \in \bar{\mathcal{P}}^\out_k$ since $v = \frac12 (v + s) + \frac12 (v - s)$. This is a contradiction to $v\in \ext\bar{\mathcal{P}}^\out_k$. Hence, $v \notin \Int \mathcal{P}$.
\end{proof}

\begin{theorem}\label{thm:alg2}
	Under \Cref{assmp:C_poly}, \Cref{alg1} works correctly: if the algorithm terminates, then it returns a finite weak $\epsilon$-solution to \eqref{eq:P}.
\end{theorem}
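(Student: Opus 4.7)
The proof must verify two things at termination: first, that $\bar{\mathcal{X}}_k$ consists of weak minimizers of \eqref{eq:P}; second, that the polyhedral inner-type approximation $\bar{\mathcal{P}}_k \coloneqq \conv \Gamma(\bar{\mathcal{X}}_k) + C$ satisfies $\mathcal{P} \subseteq \bar{\mathcal{P}}_k + \mathbb{B}_\epsilon(0)$. By \Cref{rem:Hausdorff}, this containment is equivalent to $\delta^H(\mathcal{P},\bar{\mathcal{P}}_k)\leq\epsilon$, as required by \Cref{defn:finite epsilon-solution}.

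The first property is straightforward. The initial points $x^1, \ldots, x^J$ are weak minimizers by \Cref{prop:jahn2009vector}. Any $x^v$ added during the main loop corresponds to a vertex $v \in \ext \bar{\mathcal{P}}^\out_k \subseteq S(\gamma)$ with $v \notin \Int \mathcal{P}$ by \Cref{lem:vintP}, so \Cref{prop:zv0} gives that $x^v$ is a weak minimizer.

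The second property is the main content of the proof. Upon termination, the choice of $v^k$ via \eqref{eq:zmax} as an argmax of $\norm{z^v}$ over $v \in \ext \bar{\mathcal{P}}^\out_k$, together with the termination criterion $\norm{z^{v^k}} \leq \epsilon$, forces $\norm{z^v} \leq \epsilon$ for every $v \in \ext \bar{\mathcal{P}}^\out_k$. A bookkeeping inspection of the sets $\mathcal{V}^{\mathrm{known}}$ and $\mathcal{V}^{\mathrm{known2}}$ shows that each such vertex has been processed in the current or a previous iteration, and since $\norm{z^v}\leq\epsilon$, the corresponding $x^v$ has been placed into $\bar{\mathcal{X}}_k$. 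For each such $v$, the feasibility constraint of \eqref{eq:P2(v)} gives $\Gamma(x^v) \leq_C v + z^v = y^v$, hence $y^v \in \{\Gamma(x^v)\} + C \subseteq \bar{\mathcal{P}}_k$, and therefore $v = y^v - z^v \in \bar{\mathcal{P}}_k + \mathbb{B}_\epsilon(0)$.

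Since $\bar{\mathcal{P}}^\out_k$ is a compact convex polytope, being obtained from the compact set $\bar{\mathcal{P}}^\out_0$ by successive intersections with halfspaces, it is the convex hull of its vertices; because $\bar{\mathcal{P}}_k + \mathbb{B}_\epsilon(0)$ is convex, the vertex-wise inclusion extends to $\bar{\mathcal{P}}^\out_k \subseteq \bar{\mathcal{P}}_k + \mathbb{B}_\epsilon(0)$. Combining this with $\mathcal{P} = (\mathcal{P}\cap S(\gamma)) + C \subseteq \bar{\mathcal{P}}^\out_k + C$ from \Cref{rem:Pequal} (noting also \Cref{rem:Scontains}) and the identity $\bar{\mathcal{P}}_k + C = \bar{\mathcal{P}}_k$, which follows from $C$ being a convex cone, yields $\mathcal{P} \subseteq \bar{\mathcal{P}}_k + \mathbb{B}_\epsilon(0)$. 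The main obstacle to watch carefully is the step that propagates $\epsilon$-closeness from vertices of the outer approximation to the entire polytope via convexity, in combination with the bookkeeping argument that guarantees $x^v \in \bar{\mathcal{X}}_k$ for every remaining vertex of $\bar{\mathcal{P}}^\out_k$ at termination.
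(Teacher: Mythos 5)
Your proposal is correct and follows essentially the same route as the paper: the weak-minimizer property of $\bar{\mathcal{X}}_k$ is obtained exactly as in the paper via \Cref{prop:jahn2009vector}, \Cref{lem:vintP} and \Cref{prop:zv0}, and the containment $\mathcal{P}\subseteq\conv\Gamma(\bar{\mathcal{X}}_{\hat k})+C+\mathbb{B}_\epsilon(0)$ is established by the same vertex-wise argument (feasibility of \eqref{eq:P2(v)} gives $y^v\in\{\Gamma(x^v)\}+C$, hence each terminal vertex lies within $\epsilon$ of $\bar{\mathcal{P}}_k$, and convexity plus $\mathcal{P}=(\mathcal{P}\cap S(\gamma))+C$ finishes it). The only difference is that you write out in full the containment step that the paper delegates to the proof of Theorem 6.6 in \cite{ararat2021norm}, which is a welcome addition rather than a deviation.
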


\begin{proof}
	Note that $\mathcal{\bar{X}}_0$ consists of weak minimizers and $\ext\bar{\mathcal{P}}^\out_0$ is a nonempty set of vertices, see the proof of \cite[Theorem 6.6]{ararat2021norm} for details. Let $k\geq 0$ and $v\in\ext\bar{\mathcal{P}}^\out_k$. Then, optimal solutions ($x^v$,$z^v$) and $(w^v, \lambda^v)$ to \eqref{eq:P2(v)} and \eqref{eq:D2(v)}, respectively, exist by \Cref{prop:optsol}. By \Cref{lem:vintP}, $v \notin \Int \mathcal{P}$. Hence, by \Cref{prop:zv0}, $x^v$ is a weak minimizer of \eqref{eq:P}.
	If $v^{k}\in \bd\mathcal{P}$, see line 20, then we have $z^{v^{k}} = 0$ by \Cref{lem:vlemma}. In this case, line 21 ensures that the outer approximation is not updated and the algorithm terminates. If $v^k \notin \mathcal{P}$, which implies $\tilde{w}^{v^{k}}\neq 0$ by \Cref{lem:vlemma}, then $\mathcal{H}_k$ given by \eqref{eq:Hk} is a supporting halfspace of $\mathcal{P} \cap S(\gamma)$ by \Cref{prop:supp_halfspace}. We have $\bar{\mathcal{P}}^\out_k \supseteq \mathcal{P} \cap S(\gamma)$ as $\bar{\mathcal{P}}^\out_0 \supseteq \mathcal{P} \cap S(\gamma)$; $\mathcal{H}_1, \ldots, \mathcal{H}_k$ are supporting halfspaces of $\mathcal{P}\cap S(\gamma)$; and $\bar{\mathcal{P}}^\out_{k} = \bar{\mathcal{P}}^\out_0\cap \mathcal{H}_1\cap\ldots\cap\mathcal{H}_k$. The latter also implies that $\ext\bar{\mathcal{P}}^\out_k\neq \emptyset$.
	
	Assume that the algorithm stops after $\hat{k}$ iterations. Consequently, $\bar{\mathcal{X}}_{\hat{k}}$ is a finite set of weak minimizers. Then, using similar arguments as in the proof of \cite[Theorem 6.6]{ararat2021norm}, one can show that \Cref{defn:finite epsilon-solution} holds, that is,
	\begin{align}\notag
		\conv \Gamma(\bar{\mathcal{X}}_{\hat{k}}) + C + \mathbb{B}_\epsilon(0) \supseteq \mathcal{P},
	\end{align}
	which finishes the proof of correctness.
\end{proof}

\section{Finiteness of the algorithm} \label{sec:fin}

In this section, we prove the finiteness of \Cref{alg1} by constructing a subset of fixed volume in each iteration. We show that the subsets are non-overlapping and also contained in a compact set, which implies a finite upper bound on the number of iterations of the algorithm. The result is based on the following lemma.

\begin{lemma} \label{lem:Bset}
	Suppose that \Cref{assmp:C_poly} holds. Let $v \notin \mathcal{P}$. Then, the following statements hold:
	\begin{enumerate}[(a)]
		\item It holds $\norm{z^v} = \of{\tilde{w}^v}^{\mathsf{T}}z^v$.
		\item It holds $\norm{\tilde{w}^v}_\ast=1$.
		\item If $\norm{z^v} \geq \epsilon$, then $\mathbb{B}_{\frac{\epsilon}{4}}(v) \cap \mathcal{H}(\tilde{w}^v, y^v,\epsilon) = \emptyset$, where $\mathcal{H}(\tilde{w}^v, y^v,\epsilon) \negthinspace=\negthinspace \{y \in \mathbb{R}^q \negthinspace\mid\negthinspace (\tilde{w}^v)^{\mathsf{T}}y \geq (\tilde{w}^v)^{\mathsf{T}} y^v - \frac{\epsilon}{2}\}$.
		\item It holds $\mathcal{H}(\tilde{w}^v, y^v)+ \mathbb{B}_{\frac{\epsilon}{2}}(0) \subseteq \mathcal{H}(\tilde{w}^v, y^v,\epsilon)$, where $\mathcal{H}(\tilde{w}^v, y^v)=\{y \in \mathbb{R}^q \mid (\tilde{w}^v)^{\mathsf{T}}y \geq (\tilde{w}^v)^{\mathsf{T}} y^v \}$.
	\end{enumerate}
\end{lemma}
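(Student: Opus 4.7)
The plan is to dispatch the four parts in order, using strong duality between \eqref{eq:P2(v)} and \eqref{eq:D2(v)} for (a) and (b), then combining these with H\"older's inequality for (c), and finally applying Lemma~\ref{lem:ShiftedH} directly for (d).

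For (a), I would start from the dual reformulation leading to \eqref{eq:pre_dualpr_eq} and re-examine the Lagrangian. Collecting the $z$-dependent terms, the inner minimization reads $\inf_{z\in\R^q}\bigl(\norm{z}-(\tilde w^v)^{\mathsf{T}}z\bigr)$, which equals $0$ if $\norm{\tilde w^v}_\ast\leq 1$ and $-\infty$ otherwise. Dual feasibility and strong duality (Proposition~\ref{prop:optsol}) force the infimum to be finite and to be attained at the primal optimum $z^v$, giving $\norm{z^v}=(\tilde w^v)^{\mathsf{T}}z^v$, which is (a). For (b), applying H\"older yields $\norm{z^v}=(\tilde w^v)^{\mathsf{T}}z^v\leq \norm{\tilde w^v}_\ast\norm{z^v}$. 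Since $v\notin\mathcal{P}$ (read in the standing context that $v\in S(\gamma)$, so that Lemma~\ref{lem:vlemma} applies) gives $z^v\neq 0$, I can divide by $\norm{z^v}>0$ to obtain $\norm{\tilde w^v}_\ast\geq 1$. Combined with the dual feasibility bound $\norm{\tilde w^v}_\ast\leq 1$ extracted in the proof of (a), equality follows.

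For (c), I would argue by contradiction: assume some $y\in\mathbb{B}_{\epsilon/4}(v)\cap\mathcal{H}(\tilde w^v,y^v,\epsilon)$. Using $y^v=v+z^v$ and (a), compute
\[
(\tilde w^v)^{\mathsf{T}}y^v-(\tilde w^v)^{\mathsf{T}}v=(\tilde w^v)^{\mathsf{T}}z^v=\norm{z^v}\geq \epsilon,
\]
so membership in the shifted halfspace forces $(\tilde w^v)^{\mathsf{T}}(y-v)\geq \norm{z^v}-\epsilon/2\geq \epsilon/2$. On the other hand, H\"older together with (b) bounds the same quantity by $\norm{\tilde w^v}_\ast\norm{y-v}\leq 1\cdot \epsilon/4=\epsilon/4$, contradicting $\epsilon>0$. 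The radius $\epsilon/4$ is tight enough to make this clash, which is the only delicate accounting in the proof. Part (d) is then an immediate specialization of Lemma~\ref{lem:ShiftedH} with $w=\tilde w^v$ and $y=y^v$: part (b) verifies the hypothesis $\norm{\tilde w^v}_\ast\leq 1$ of that lemma verbatim.

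The main obstacle is really (b): one must correctly recognize that the constraint $\norm{\tilde w^v}_\ast\leq 1$ of \eqref{eq:pre_dualpr_eq} arises precisely from requiring the $z$-infimum in the Lagrangian to be finite, and then leverage $z^v\neq 0$ to promote this inequality to equality. Everything else (parts (a), (c), (d)) is a direct computation once (b) is in hand.
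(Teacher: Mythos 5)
Your proof is correct and follows essentially the same route as the paper: parts (a) and (c) reproduce the strong-duality/H\"older arguments that the paper imports from \cite{ararat2021norm}, part (b) coincides with the paper's explicit chain $\norm{z^v}=(\tilde w^v)^{\mathsf{T}}z^v\leq\norm{\tilde w^v}_\ast\norm{z^v}\leq\norm{z^v}$ combined with $z^v\neq 0$, and (d) is the same direct invocation of \Cref{lem:ShiftedH}. Your explicit observation that $v\in S(\gamma)$ is needed so that \Cref{lem:vlemma} yields $z^v\neq 0$ matches the paper's implicit standing assumption in its own proof.
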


\begin{proof}
	Using similar arguments as in the proof of \cite[Lemma 7.1]{ararat2021norm}, (a) and (c) follow. Moreover, (d) follows from \Cref{lem:ShiftedH}. To see (b), note that
	\[
	\norm{z^v}=\of{\tilde{w}^v}^{\mathsf{T}}z^v\leq \norm{\tilde{w}^v}_\ast\norm{z^v}\leq\norm{z^v} ,
	\]
	which follows by (a), H\"{o}lder's inequality, and the feasibility of $\tilde{w}^v$ for the formulation in \eqref{eq:pre_dualpr_eq}. Hence, all terms are equal. Since $z^v\neq 0$ by \Cref{lem:vlemma}(i), we get $\norm{\tilde{w}^v}_\ast=1$. 
\end{proof}

\begin{theorem}\label{thm:finiteness}
	Suppose that \Cref{assmp:C_poly} holds. Then, \Cref{alg1} terminates after a finite number of iterations.
\end{theorem}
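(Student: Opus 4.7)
The plan is to bound the number of iterations by a volume-packing argument. Concretely, for each iteration $k$ at which the algorithm does not stop, I define the ball $B_k \coloneqq \mathbb{B}_{\epsilon/4}(v^k)$ of radius $\epsilon/4$ around the worst vertex $v^k$ chosen in \eqref{eq:zmax}. The two things I need to verify are (i) all these balls lie inside a common bounded set, and (ii) the balls are pairwise disjoint. Together with the fact that each ball has fixed positive Lebesgue volume proportional to $(\epsilon/4)^q$, this bounds the number of iterations by a finite constant depending only on the initial outer approximation and on $\epsilon$.

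For (i), observe that $v^k \in \ext \bar{\mathcal{P}}^\out_k \subseteq \bar{\mathcal{P}}^\out_0$ for every $k$, and $\bar{\mathcal{P}}^\out_0 = \mathcal{P}^\out_0 \cap S(\gamma)$ is compact by \cite[Lemma~6.3]{ararat2021norm}. Hence $B_k \subseteq \bar{\mathcal{P}}^\out_0 + \mathbb{B}_{\epsilon/4}(0)$, a bounded set whose volume is finite.

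For (ii), the key ingredients are the separation facts (c) and (d) of \Cref{lem:Bset}. Fix iteration indices $k < l$ that both reach the cutting step, so $\|z^{v^k}\|>\epsilon$ and, by \Cref{lem:vlemma}, $v^k\notin\mathcal{P}$. The update $\bar{\mathcal{P}}^\out_{k+1} = \bar{\mathcal{P}}^\out_k \cap \mathcal{H}_k$ gives $v^l \in \bar{\mathcal{P}}^\out_l \subseteq \bar{\mathcal{P}}^\out_{k+1} \subseteq \mathcal{H}(\tilde{w}^{v^k}, y^{v^k})$. Applying \Cref{lem:Bset}(d) then yields
\[
\mathbb{B}_{\epsilon/2}(v^l) \subseteq \mathcal{H}(\tilde{w}^{v^k}, y^{v^k}) + \mathbb{B}_{\epsilon/2}(0) \subseteq \mathcal{H}(\tilde{w}^{v^k}, y^{v^k},\epsilon).
\]
On the other hand, \Cref{lem:Bset}(c) guarantees $B_k \cap \mathcal{H}(\tilde{w}^{v^k}, y^{v^k},\epsilon) = \emptyset$, so $B_k \cap \mathbb{B}_{\epsilon/2}(v^l) = \emptyset$, and in particular $B_k \cap B_l = \emptyset$.

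Combining (i) and (ii), any $N$ iterations of the cutting step produce $N$ pairwise disjoint balls of volume $\operatorname{vol}(\mathbb{B}_{\epsilon/4}(0))$ packed inside $\bar{\mathcal{P}}^\out_0 + \mathbb{B}_{\epsilon/4}(0)$, so
\[
N \leq \frac{\operatorname{vol}\!\bigl(\bar{\mathcal{P}}^\out_0 + \mathbb{B}_{\epsilon/4}(0)\bigr)}{\operatorname{vol}\!\bigl(\mathbb{B}_{\epsilon/4}(0)\bigr)} < \infty,
\]
which proves termination. The main obstacle, and the whole reason the modified scalarization \eqref{eq:P2(v)} is introduced in place of \eqref{eq:P(v)}, is precisely step (i): without the additional constraint $\bar{w}^{\mathsf{T}}(v+z)\leq \gamma$ that forces the vertices $v^k$ to stay inside the compact slice $S(\gamma)$, the vertices could drift to infinity along recession directions of $\mathcal{P}$ and the packing argument would fail; all the remaining work has been prepared by \Cref{lem:vintP}, \Cref{prop:zv0}, and \Cref{lem:Bset}.
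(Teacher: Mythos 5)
Your proposal is correct and follows essentially the same route as the paper's proof: pairwise disjoint balls $\mathbb{B}_{\epsilon/4}(v^k)$ packed into the compact set $\bar{\mathcal{P}}^\out_0+\mathbb{B}_{\epsilon/2}(0)$, with disjointness obtained from \Cref{lem:Bset}(c) and (d) exactly as in the paper. The only (immaterial) difference is that you push the ball around $v^l$ into $\mathcal{H}(\tilde w^{v^k},y^{v^k})+\mathbb{B}_{\epsilon/2}(0)$ directly, whereas the paper routes the same inclusion through $\bar{\mathcal{P}}^\out_l+\mathbb{B}_{\epsilon/2}(0)$.
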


\begin{proof}
	First, note that $\ext \bar{\mathcal{P}}^\out_k$ consists of finitely many vertices for each $k \geq 0$. Next, we show the existence of some $K \geq 0$ such that $\|z^v\| \leq \epsilon$ holds for every $v\in\ext \bar{\mathcal{P}}^\out_K$. Assume otherwise, that is, for every $k \geq$ 0, we have $\|z^{v^k}\| > \epsilon$, where $v^k$ is as in \eqref{eq:zmax}.
	
	By \cite[Lemma 6.3]{ararat2021norm}, the set $\bar{\mathcal{P}}^\out_0$ is compact. The ball $\mathbb{B}_{\frac{\epsilon}{2}}(0)$ is also compact. Hence, $\bar{\mathcal{P}}^\out_0 + \mathbb{B}_{\frac{\epsilon}{2}}(0)$ is compact by \cite[Lemma~5.3]{guide2006infinite}. For each $k\geq 0$, since $v^k \in \bar{\mathcal{P}}^\out_k$, we have
	\begin{equation} \label{eq:Bk_in_Pk}
		\mathbb{B}_{\frac{\epsilon}{4}}(v^k) \subseteq \{v^k\} + \mathbb{B}_{\frac{\epsilon}{2}}(0) \subseteq \bar{\mathcal{P}}^\out_k + \mathbb{B}_{\frac{\epsilon}{2}}(0) \subseteq \bar{\mathcal{P}}^\out_0 + \mathbb{B}_{\frac{\epsilon}{2}}(0).
	\end{equation} 
	Next, we show that $\mathbb{B}_{\frac{\epsilon}{4}}(v^i)\cap \mathbb{B}_{\frac{\epsilon}{4}}(v^j) = \emptyset$ for every $j > i \geq 0$. Clearly, $\bar{\mathcal{P}}^\out_j \subseteq \bar{\mathcal{P}}^\out_{i+1}$. By \Cref{lem:Bset}(c), we have $\mathbb{B}_{\frac{\epsilon}{4}}(v^i) \cap \mathcal{H}(\tilde{w}^{v^i},y^{v^i},\epsilon)= \emptyset.$ Moreover, we have
	\begin{align}\notag 
		\bar{\mathcal{P}}^\out_j + \mathbb{B}_{\frac{\epsilon}{2}}(0)  \subseteq \bar{\mathcal{P}}^\out_{i+1} + \mathbb{B}_{\frac{\epsilon}{2}}(0)= (\bar{\mathcal{P}}^\out_{i} \cap \mathcal{H}_{i}) + \mathbb{B}_{\frac{\epsilon}{2}}(0)\subseteq \mathcal{H}_{i} +\mathbb{B}_{\frac{\epsilon}{2}}(0),
	\end{align}
	where $\mathcal{H}_i$ is the supporting halfspace of $\mathcal{P}\cap S(\gamma)$ at $y^i$ as obtained in \Cref{prop:supp_halfspace}. Using \Cref{lem:Bset}(d) with the above inclusion, we get
	\begin{align}\notag 
		\bar{\mathcal{P}}^\out_j + \mathbb{B}_{\frac{\epsilon}{2}}(0)  \subseteq \mathcal{H}_{i} + \mathbb{B}_{\frac{\epsilon}{2}}(0)\subseteq \mathcal{H}(\tilde{w}^{v^i},y^{v^i},\epsilon).
	\end{align}
	This implies $\mathbb{B}_{\frac{\epsilon}{4}}(v^i) \cap (\bar{\mathcal{P}}^\out_{j} + \mathbb{B}_{\frac{\epsilon}{2}}(0) = \emptyset$. From \eqref{eq:Bk_in_Pk}, $\mathbb{B}_{\frac{\epsilon}{4}}(v^j) \subseteq \bar{\mathcal{P}}^\out_j + \mathbb{B}_{\frac{\epsilon}{2}}(0)$. Hence, $\mathbb{B}_{\frac{\epsilon}{4}}(v^i) \cap \mathbb{B}_{\frac{\epsilon}{4}}(v^j) = \emptyset$. Note that the sets $\mathbb{B}_{\frac{\epsilon}{4}}(v^k)$, $k\geq 0$, have the same volume which is strictly positive. With \eqref{eq:Bk_in_Pk}, these imply that the compact set $\bar{\mathcal{P}}^\out_0 + \mathbb{B}_{\frac{\epsilon}{2}}(0)$ contains an infinite number of disjoint subsets of identical and nonzero volume, a contradiction.
\end{proof}

\section{Convergence rate of the algorithm} \label{sec:conv}

In order to study the convergence rate of \Cref{alg1}, we aim to use the results of Kamenev \cite{kamenev1992class} and Lotov et al. \cite[Chapter 8]{lotov2013interactive}, which originally hold for convex compact bodies.

For the theoretical analysis in this section and in \Cref{sec:conv_l2}, we ignore the stopping criteria in \Cref{alg1} and assume that it runs indefinitely while updating the current outer approximation in each iteration. This can be done by ignoring lines 21 and 25 of the algorithm. 

\begin{remark}\label{cor:limit}
	Under \Cref{assmp:C_poly}, for the above modification of \Cref{alg1}, it holds
	\begin{align}\notag 
		\lim_{k\to \infty} \delta^H(\bar{\mathcal{P}}^\inn_k, \mathcal{P}) = \lim_{k\to \infty} \delta^H(\bar{\mathcal{P}}^\out_k + C, \mathcal{P}) = 0,
	\end{align} 
	where, for each $k\geq 0$, $\bar{\mathcal{P}}^\inn_{k} \coloneqq \conv \Gamma(\bar{\mathcal{X}}_{k}) + C$ and the sets $\bar{\mathcal{X}}_{k}, \bar{\mathcal{P}}^\out_k$ are as described in \Cref{alg1}. The proof follows the same lines as \cite[Corollary 7.4]{ararat2021norm} combined with \Cref{thm:finiteness}.
\end{remark}

In the following, we recall a definition from \cite[Chapter 8]{lotov2013interactive}. Note that the notations used in the original definition are replaced with the ones in this paper to remain consistent. To use the convergence result, \cite[Theorem 8.6]{lotov2013interactive}, our algorithm has to produce the type of sequence of outer approximating polytopes provided in \Cref{def:LotovH} below. 

\begin{definition} \cite[Definition~8.3]{lotov2013interactive}\label{def:LotovH} Consider a nonempty convex compact set $A \subseteq\R^q$ and a sequence $(A_k)_{k\geq 0}$ of polytopes in $\R^q$. Assume that $A_0 = \bigcap_{i=1}^I \mathcal{H}(\omega^i,A)$, where $I \in \mathbb{N}$ and $\omega^i \in\R^q\setminus\{0\}$ for $i\in\{1,\ldots, I\}$. We say that $(A_k)_{k\geq 0}$ is \emph{generated by a cutting method} if, for every $k\geq 0$, it holds $A_k \supseteq A$ and there exists a supporting halfspace $H_k \subseteq \R^q$ of $A$ such that $A_{k+1} = A_k \cap H_k$. In this case, $(A_k)_{k\geq 0}$ is called an \emph{$H(r,A)$-sequence of cutting} for a given constant $r > 0$ if, for every $k \geq 0$, it holds
	\begin{align}\notag
		\delta^H(A_k, A_{k+1}) \geq r \delta^H(A_k, A).
	\end{align}
\end{definition}

Note that, in each iteration $k\geq 0$ of the algorithm, we choose a farthest vertex, denoted as $v^k$, to the upper image of the current outer approximation and generate a halfspace $\mathcal{H}_k$, given by \eqref{eq:Hk}, to update the current outer approximation. Using this update structure, we show in \Cref{prop:H_seq_eq}  that 
the Hausdorff distance between any two consecutive outer approximations, $\bar{\mathcal{P}}^\out_k$ and $\bar{\mathcal{P}}^\out_{k+1}$, is equal to the Hausdorff distance between the former outer approximation $\bar{\mathcal{P}}^\out_k$ and $\mathcal{P} \cap S(\gamma)$.

The next lemma is a simple observation that will be used in \Cref{prop:H_seq_eq}. 

\begin{lemma} \label{lem:Bsetzv}
	Suppose that \Cref{assmp:C_poly} holds. Let $v\in S(\gamma)\setminus \mathcal{P}$. Then, $\Int \mathbb{B}_{\norm{z^v}}(v) \cap \mathcal{H}(\tilde{w}^v, y^v)  = \emptyset$, where the halfspace $\mathcal{H}(\tilde{w}^v, y^v)$ is defined by \Cref{prop:supp_halfspace}. 
\end{lemma}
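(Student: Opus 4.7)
The plan is to unpack what membership in the interior of the ball and in the halfspace mean, and then show that a single Hölder-type estimate rules out their simultaneous occurrence. The key inputs from the paper are Lemma~\ref{lem:Bset}(a), which gives $\|z^v\|=(\tilde{w}^v)^{\mathsf{T}}z^v$, and Lemma~\ref{lem:Bset}(b), which gives $\|\tilde{w}^v\|_\ast=1$; both are applicable because the hypothesis $v\in S(\gamma)\setminus\mathcal{P}$ guarantees that $v\notin\mathcal{P}$. These let us convert an inner product against $\tilde{w}^v$ into a norm on the ``$z^v$-direction'' while controlling perturbations in an arbitrary direction by $\|\cdot\|$.

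The core computation I would carry out is the following. Fix any $y\in\Int\mathbb{B}_{\|z^v\|}(v)$, i.e.\ $\|y-v\|<\|z^v\|$. Then, using $y^v=v+z^v$ and Lemma~\ref{lem:Bset}(a),
\[
(\tilde{w}^v)^{\mathsf{T}}(y^v-y)=(\tilde{w}^v)^{\mathsf{T}}z^v+(\tilde{w}^v)^{\mathsf{T}}(v-y)=\|z^v\|+(\tilde{w}^v)^{\mathsf{T}}(v-y).
\]
By Hölder's inequality and Lemma~\ref{lem:Bset}(b),
\[
(\tilde{w}^v)^{\mathsf{T}}(v-y)\geq -\|\tilde{w}^v\|_\ast\|v-y\|=-\|v-y\|>-\|z^v\|,
\]
so $(\tilde{w}^v)^{\mathsf{T}}(y^v-y)>0$, i.e.\ $(\tilde{w}^v)^{\mathsf{T}}y<(\tilde{w}^v)^{\mathsf{T}}y^v$. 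By the definition of $\mathcal{H}(\tilde{w}^v,y^v)$ recalled in Proposition~\ref{prop:supp_halfspace}, this says $y\notin\mathcal{H}(\tilde{w}^v,y^v)$. Since $y$ was arbitrary in the open ball, the intersection is empty.

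There is no substantive obstacle here; the only subtlety is making sure that Lemma~\ref{lem:Bset} is applicable, which requires $v\notin\mathcal{P}$ (guaranteed by $v\in S(\gamma)\setminus\mathcal{P}$) and in particular that $z^v\neq 0$ and $\tilde{w}^v\neq 0$ via Lemma~\ref{lem:vlemma}(a), so that both sides of the strict inequality $\|v-y\|<\|z^v\|$ are meaningful and the normalization $\|\tilde{w}^v\|_\ast=1$ can be invoked. The statement is essentially a geometric restatement of the fact that $y^v$ is the closest point of the halfspace boundary (in the $\tilde{w}^v$-direction) to $v$ at $\|\cdot\|$-distance $\|z^v\|$, and the proof is just one line of Hölder plus the two facts from Lemma~\ref{lem:Bset}.
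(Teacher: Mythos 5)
Your proof is correct, and it is essentially the argument the paper has in mind: the paper simply defers the computation to the proof of \cite[Lemma 7.1]{ararat2021norm}, whereas you spell out the H\"older estimate using Lemma~\ref{lem:Bset}(a),(b), which is exactly the right way to fill in that reference. The applicability checks you note ($v\notin\mathcal{P}$, hence $z^v\neq 0$ and $\tilde{w}^v\neq 0$ by Lemma~\ref{lem:vlemma}(a)) match the first line of the paper's own proof.
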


\begin{proof}
	First, note that {$v \in S(\gamma)\setminus\mathcal{P}$} implies $\norm{z^v} > 0$ and $\tilde{w}^v\neq 0$ by \Cref{lem:vlemma}(a). Hence, $\Int\mathbb{B}_{\norm{z^{v}}}(v)\neq \emptyset$ and $\mathcal{H}(\tilde{w}^v, y^v)$ is well-defined. Using similar arguments as in the proof of \cite[Lemma 7.1]{ararat2021norm}, the assertion of the lemma follows.
\end{proof}

\begin{theorem}\label{prop:H_seq_eq}
	Suppose that \Cref{assmp:C_poly} holds and consider the sequence of outer approximating polytopes $(\bar{\mathcal{P}}^\out_k)_{k \geq 0}$ produced by \Cref{alg1}. Then, for every $k\geq0$, we have
	\[
	\delta^H(\bar{\mathcal{P}}^\out_k, \bar{\mathcal{P}}^\out_{k+1}) = \delta^H(\bar{\mathcal{P}}^\out_k, \mathcal{P} \cap S(\gamma)).
	\]
\end{theorem}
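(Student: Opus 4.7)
The plan is to show that both Hausdorff distances appearing in the identity coincide with $\norm{z^{v^k}}$. The starting point is \Cref{rem:dist}: for every $v \in S(\gamma)$, we have $\norm{z^v} = d(v, \mathcal{P}\cap S(\gamma))$, and this applies in particular to every $v \in \bar{\mathcal{P}}^\out_k$ since $\bar{\mathcal{P}}^\out_k \subseteq S(\gamma)$ by construction.

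First, I would evaluate the Hausdorff distance on the right-hand side. Using $\mathcal{P}\cap S(\gamma) \subseteq \bar{\mathcal{P}}^\out_k$, the Hausdorff distance reduces to the one-sided supremum
\[
\delta^H(\bar{\mathcal{P}}^\out_k, \mathcal{P}\cap S(\gamma)) = \sup_{v \in \bar{\mathcal{P}}^\out_k} d(v, \mathcal{P}\cap S(\gamma)) = \sup_{v \in \bar{\mathcal{P}}^\out_k} \norm{z^v}.
\]
Since $\mathcal{P}\cap S(\gamma)$ is convex, $v \mapsto d(v, \mathcal{P}\cap S(\gamma))$ is convex and continuous, so its supremum over the compact polytope $\bar{\mathcal{P}}^\out_k$ is attained at an extreme point. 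By the defining property of $v^k$ in \eqref{eq:zmax}, this maximum equals $\norm{z^{v^k}}$.

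For the left-hand side, the inclusion $\bar{\mathcal{P}}^\out_{k+1} \subseteq \bar{\mathcal{P}}^\out_k$ reduces $\delta^H(\bar{\mathcal{P}}^\out_k, \bar{\mathcal{P}}^\out_{k+1})$ to $\sup_{v \in \bar{\mathcal{P}}^\out_k} d(v, \bar{\mathcal{P}}^\out_{k+1})$. The upper bound $\delta^H(\bar{\mathcal{P}}^\out_k, \bar{\mathcal{P}}^\out_{k+1}) \leq \norm{z^{v^k}}$ follows from the pointwise inequality $d(v, \bar{\mathcal{P}}^\out_{k+1}) \leq d(v, \mathcal{P}\cap S(\gamma))$, which is a consequence of $\mathcal{P}\cap S(\gamma) \subseteq \bar{\mathcal{P}}^\out_{k+1}$, together with the computation from the previous paragraph. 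For the matching lower bound, the case $\norm{z^{v^k}} = 0$ is trivial; otherwise, from $\bar{\mathcal{P}}^\out_{k+1} \subseteq \mathcal{H}_k$, I would apply the closed-form for the distance from a point to a halfspace in a general norm,
\[
d(v^k, \mathcal{H}_k) = \frac{\of{(\tilde{w}^{v^k})^{\mathsf{T}}(y^{v^k} - v^k)}^+}{\norm{\tilde{w}^{v^k}}_\ast}.
\]
Substituting $y^{v^k} = v^k + z^{v^k}$ and invoking \Cref{lem:Bset}(a) to get $(\tilde{w}^{v^k})^{\mathsf{T}} z^{v^k} = \norm{z^{v^k}}$ and \Cref{lem:Bset}(b) to get $\norm{\tilde{w}^{v^k}}_\ast = 1$, I obtain $d(v^k, \mathcal{H}_k) = \norm{z^{v^k}}$. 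Hence $\delta^H(\bar{\mathcal{P}}^\out_k, \bar{\mathcal{P}}^\out_{k+1}) \geq d(v^k, \bar{\mathcal{P}}^\out_{k+1}) \geq d(v^k, \mathcal{H}_k) = \norm{z^{v^k}}$, closing the chain.

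The main technical step is the identity $d(v^k, \mathcal{H}_k) = \norm{z^{v^k}}$ in the arbitrary-norm setting, which depends crucially on both duality identities packaged in \Cref{lem:Bset}; fortunately these are already available. A minor but essential ingredient is the classical Bauer-type fact that a continuous convex function attains its supremum over a compact polytope at an extreme point, which is what allows the supremum over $\bar{\mathcal{P}}^\out_k$ to be replaced by the maximum over $\ext \bar{\mathcal{P}}^\out_k$ in the first step.
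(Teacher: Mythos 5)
Your proposal is correct and takes essentially the same route as the paper's proof: both reduce the two Hausdorff distances to $\norm{z^{v^k}}$ by restricting to vertices, identifying $\norm{z^v}=d(v,\mathcal{P}\cap S(\gamma))$, and using the dual-norm identities of \Cref{lem:Bset}(a),(b) together with H\"older's inequality to bound the distance from $v^k$ to $\mathcal{H}_k$ from below. The only cosmetic difference is that you package the lower bound as the closed-form point-to-halfspace distance formula (and dispatch the $\norm{z^{v^k}}=0$ case via the upper bound), whereas the paper derives the same inequality inline and handles the case $\ext\bar{\mathcal{P}}^\out_k\subseteq\mathcal{P}$ separately.
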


\begin{proof}
	Let $k\geq 0$. By construction, we have $\bar{\mathcal{P}}^\out_{k+1} \subseteq \bar{\mathcal{P}}^\out_k$. By \cite[Lemma 5.3]{ararat2021norm}, we get
	\begin{align}\notag 
		\delta^H(\bar{\mathcal{P}}^\out_k, \bar{\mathcal{P}}^\out_{k+1})
		= \max_{v \in \ext \bar{\mathcal{P}}^\out_k} \ d(y, \bar{\mathcal{P}}^\out_{k+1}).
	\end{align}
	First, we consider the case $\ext \bar{\mathcal{P}}^\out_k \subseteq \mathcal{P}$. Since $\bar{\mathcal{P}}^\out_k$ is a polytope, we have $\bar{\mathcal{P}}^\out_k \subseteq \mathcal{P}$ by the convexity of $\mathcal{P}$. Then, from $\bar{\mathcal{P}}^\out_k \subseteq \bar{\mathcal{P}}^\out_{0} = \mathcal{P}^\out_0 \cap S(\gamma) \subseteq S(\gamma)$, we get $\bar{\mathcal{P}}^\out_k \subseteq \mathcal{P} \cap S(\gamma)$. The reverse inclusion is trivial by construction so that $\bar{\mathcal{P}}^\out_k = \mathcal{P} \cap S(\gamma)$. Then, $\bar{\mathcal{P}}^\out_{k+1} = \bar{\mathcal{P}}^\out_k$ and $\delta^H(\bar{\mathcal{P}}^\out_k, \bar{\mathcal{P}}^\out_{k+1}) = 0$. Hence, the equality is trivial. 
	
	Next, let us assume that there exists some $v\in \ext\bar{\mathcal{P}}^\out_k$ such that $v \notin \mathcal{P}$. Without loss of generality, we may assume that $v=v^k$; recall that $v^k$ is a farthest vertex to $\mathcal{P} \cap S$; see \eqref{eq:zmax}. Then, $ \Int \mathbb{B}_{\|z^{v^k}\|}(v^k) \cap \mathcal{H}_k = \emptyset$ by \Cref{lem:Bsetzv}. Hence, $v^k \notin \bar{\mathcal{P}}^\out_k \cap \mathcal{H}_k = \bar{\mathcal{P}}^\out_{k+1}$. This implies $\delta^H(\bar{\mathcal{P}}^\out_k, \bar{\mathcal{P}}^\out_{k+1}) \geq \|z^{v^k}\| > 0$, {where the second inequality is due to} $v^k \notin \mathcal{P}$; see \Cref{lem:vlemma}. By \cite[Lemma 5.3]{ararat2021norm}, $\delta^H(\bar{\mathcal{P}}^\out_k, \mathcal{P} \cap S(\gamma))$ is attained at a vertex of $\bar{\mathcal{P}}^\out_k$ {so that}
	\begin{align}\notag 
		\delta^H(\bar{\mathcal{P}}^\out_k, \mathcal{P} \cap S(\gamma))
		= \max_{v \in \ext \bar{\mathcal{P}}^\out_k} \ d(v, \mathcal{P} \cap S(\gamma)) 
		= \max_{v \in \ext \bar{\mathcal{P}}^\out_k} \ \norm{z^v} 
		= \|z^{v^{k}}\|.
	\end{align}
	Let $\bar{y} \in \mathcal{H}_k$ be arbitrary. By the definition of $\mathcal{H}_k$ in \eqref{eq:Hk} and \Cref{lem:Bset}(a), we have $(\tilde{w}^{v^{k}})^{\mathsf{T}}\bar{y} \geq (\tilde{w}^{v^{k}})^{\mathsf{T}}(v^{k}+z^{v^{k}}) = (\tilde{w}^{v^{k}})^{\mathsf{T}} v^{k} + \|z^{v^{k}}\|$, that is,
	\begin{align}\label{eq:gennorm_3}
		(\tilde{w}^{v^{k}})^{\mathsf{T}}(\bar{y}-v^{k}) \geq \|z^{v^{k}}\|.
	\end{align} 
	
	On the other hand, by H\"older's inequality and \eqref{eq:pre_dualpr_eq}, we have
	\begin{align}\label{eq:gennorm_4}
		\abs{(\tilde{w}^{v^{k}})^{\mathsf{T}}(\bar{y}-v^{k})} \leq \|(\tilde{w}^{v^{k}})\|_{\ast}\|\bar{y}-v^{k}\| \leq \|\bar{y}-v^{k}\|.
	\end{align}
	Then, using \eqref{eq:gennorm_3} and \eqref{eq:gennorm_4}, we obtain $\|\bar{y}-v^{k}\| \geq \abs{(\tilde{w}^{v^{k}})^{\mathsf{T}}(\bar{y}-v^{k})} \geq \|z^{v^{k}}\|$. We get
	\begin{align}\notag 
		\delta^H(\bar{\mathcal{P}}^\out_k, \mathcal{P} \cap S(\gamma))
		= \|z^{v^{k}}\| & \leq \min_{y \in \mathcal{H}_k} \|y - v^{k}\| \\ \notag
		& \leq \min_{y \in \bar{\mathcal{P}}^\out_k \cap \mathcal{H}_k} \|y - {v^{k}}\| \\ \notag
		& = d(v^{k}, \bar{\mathcal{P}}^\out_k \cap \mathcal{H}_k) \\ \notag
		& \leq \max_{v \in \ext\bar{\mathcal{P}}^\out_k} \ d(v, \bar{\mathcal{P}}^\out_k \cap \mathcal{H}_k) \\ \notag
		& = \max_{v \in \ext\bar{\mathcal{P}}^\out_k} \ d(v, \bar{\mathcal{P}}^\out_{k+1}) 
		= \delta^H(\bar{\mathcal{P}}^\out_k, \bar{\mathcal{P}}^\out_{k+1}),
	\end{align}
	where the last equality follows from the fact that $\delta^H(\bar{\mathcal{P}}^\out_k, \bar{\mathcal{P}}^\out_{k+1})$ is attained at a vertex of $\bar{\mathcal{P}}^\out_k$; see \cite[Lemma 5.3]{ararat2021norm}. Hence, $\delta^H(\bar{\mathcal{P}}^\out_k, \bar{\mathcal{P}}^\out_{k+1}) \geq \delta^H(\bar{\mathcal{P}}^\out_k, \mathcal{P} \cap S(\gamma))$. Next, since $\bar{\mathcal{P}}^{\out}_k \supseteq \bar{\mathcal{P}}^{\out}_{k+1} \supseteq \mathcal{P} \cap S(\gamma)$, we have $\delta^H(\bar{\mathcal{P}}^\out_k, \bar{\mathcal{P}}^\out_{k+1}) \leq \delta^H(\bar{\mathcal{P}}^\out_k, \mathcal{P} \cap S(\gamma))$. Hence, the equality {in the theorem} follows.
\end{proof}

From now on, we use the following notation:
\begin{align}\label{eq:defA}
	\mathcal{A} := \mathcal{P} \cap S(\gamma); \qquad 
	\mathcal{A}_k := \bar{\mathcal{P}}^\out_k, \quad
	k \geq 0.
\end{align}
The next corollary verifies that these sets form an instance of \Cref{def:LotovH}.

\begin{corollary}\label{cor:H_seq}
	Suppose that \Cref{assmp:C_poly} holds. Then, the sequence of outer approximating polytopes $(\mathcal{A}_k)_{k \geq 0}$ is an $H(1,\mathcal{A})$-sequence of cutting, that is, $\delta^H(\mathcal{A}_k, \mathcal{A}_{k+1}) \geq \delta^H(\mathcal{A}_k, \mathcal{A})$ holds for every $k\geq 0$.
\end{corollary}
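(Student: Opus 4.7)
The plan is to recognize that \Cref{cor:H_seq} is essentially immediate from \Cref{prop:H_seq_eq}, which already yields the equality $\delta^H(\mathcal{A}_k,\mathcal{A}_{k+1})=\delta^H(\mathcal{A}_k,\mathcal{A})$ (and hence the desired $H(1,\mathcal{A})$-inequality). The real task is to verify that $(\mathcal{A}_k)_{k\geq 0}$ fits into the framework of \Cref{def:LotovH}: (i) $\mathcal{A}_0$ admits a representation $\bigcap_{i=1}^{I}\mathcal{H}(\omega^i,\mathcal{A})$, and (ii) the sequence is generated by a cutting method with $\mathcal{A}_k \supseteq \mathcal{A}$ and each $\mathcal{A}_{k+1}=\mathcal{A}_k\cap H_k$ for some supporting halfspace $H_k$ of $\mathcal{A}$.

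For (i), by \Cref{alg1} we have $\mathcal{A}_0=\mathcal{P}^\out_0\cap S(\gamma)$. I would match each halfspace in \eqref{eq:P_0} to the form $\mathcal{H}(w^j,\mathcal{A})$: because $x^j$ solves (WS$(w^j)$) with $w^j\in C^+$, a standard computation using $\mathcal{P}=\Gamma(\mathcal{X})+C$ gives $\inf_{y\in\mathcal{P}}(w^j)^{\mathsf{T}}y=(w^j)^{\mathsf{T}}\Gamma(x^j)$; since $\Gamma(x^j)\in\Gamma(\mathcal{X})\subseteq\mathcal{A}\subseteq\mathcal{P}$, the same value equals $\inf_{y\in\mathcal{A}}(w^j)^{\mathsf{T}}y$. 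For $S(\gamma)$, I would show $\sup_{y\in\mathcal{A}}\bar{w}^{\mathsf{T}}y=\gamma$ by choosing any $\Gamma(x)\in\mathcal{A}$ and translating along some $c\in C$ with $\bar{w}^{\mathsf{T}}c>0$ (which exists since $\bar{w}\in\Int C^+$ and $C\neq\{0\}$) until its $\bar{w}$-value reaches $\gamma$; then $S(\gamma)=\mathcal{H}(-\bar{w},\mathcal{A})$. Setting $\omega^j:=w^j$ for $j\leq J$ and $\omega^{J+1}:=-\bar{w}$ yields $\mathcal{A}_0=\bigcap_{i=1}^{J+1}\mathcal{H}(\omega^i,\mathcal{A})$.

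For (ii), the inclusion $\mathcal{A}_k\supseteq\mathcal{A}$ follows by induction from $\mathcal{A}_0\supseteq\mathcal{A}$ (\Cref{rem:Scontains}) and the recursion $\mathcal{A}_{k+1}=\mathcal{A}_k\cap\mathcal{H}_k$ in \Cref{alg1}. Each $\mathcal{H}_k$ is a supporting halfspace of $\mathcal{A}=\mathcal{P}\cap S(\gamma)$ by \Cref{prop:supp_halfspace}, using that $v^k\in\ext\mathcal{A}_k\subseteq S(\gamma)$ and that $\tilde{w}^{v^k}\neq 0$ whenever $v^k\notin\mathcal{P}$ (by \Cref{lem:vlemma}(a)). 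The degenerate case $v^k\in\mathcal{P}$ is handled exactly as in the proof of \Cref{prop:H_seq_eq}: then every vertex of $\mathcal{A}_k$ lies in $\mathcal{P}$, so $\mathcal{A}_k\subseteq\mathcal{P}\cap S(\gamma)=\mathcal{A}$, forcing $\mathcal{A}_k=\mathcal{A}$; any supporting halfspace of $\mathcal{A}$ may be taken as $H_k$ and both sides of the inequality vanish.

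With (i) and (ii) verified, the conclusion is immediate from \Cref{prop:H_seq_eq}: for every $k\geq 0$, $\delta^H(\mathcal{A}_k,\mathcal{A}_{k+1})=\delta^H(\mathcal{A}_k,\mathcal{A})\geq 1\cdot\delta^H(\mathcal{A}_k,\mathcal{A})$, which is precisely the $H(1,\mathcal{A})$-sequence condition. There is no genuine obstacle here; the only mildly subtle point is carefully matching the defining halfspaces of $\mathcal{A}_0$ to the form $\mathcal{H}(\omega^i,\mathcal{A})$ demanded by \Cref{def:LotovH}, while the inequality itself is a free consequence of the equality already proved in \Cref{prop:H_seq_eq}.
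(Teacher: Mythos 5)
Your proposal is correct and follows essentially the same route as the paper: verify that $(\mathcal{A}_k)_{k\geq 0}$ satisfies the conditions of \Cref{def:LotovH} (initial polytope as an intersection of supporting halfspaces of $\mathcal{A}$, each update via a supporting halfspace from \Cref{prop:supp_halfspace}) and then invoke the equality of \Cref{prop:H_seq_eq}. You simply spell out details the paper leaves implicit, such as identifying $S(\gamma)$ with $\mathcal{H}(-\bar{w},\mathcal{A})$ and handling the degenerate case $v^k\in\mathcal{P}$.
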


\begin{proof}
	Let $k\geq 0$. Clearly, $\mathcal{A}$ and $\mathcal{A}_k$ satisfy the required conditions for the sets in \Cref{def:LotovH}, since $\mathcal{A} \subseteq \mathcal{A}_k$ is compact by \cite[Lemma 6.3]{ararat2021norm}; $\mathcal{A}^0$ is defined as intersection of supporting halfspaces of $\mathcal{A}$, see lines 3 and 6 of \Cref{alg1}; and $\mathcal{A}_k$ has the required property, see line 22 of \Cref{alg1} and \Cref{prop:supp_halfspace}.
	Moreover, by the definitions of $\mathcal{A}$ and $\mathcal{A}_k$, and \Cref{prop:H_seq_eq}, we have $\delta^H(\mathcal{A}_k, \mathcal{A}_{k+1}) = \delta^H(\mathcal{A}_k, \mathcal{A})$.
\end{proof}

We restate an important theorem from \cite{lotov2013interactive} that will be useful in proving the convergence of $(\mathcal{A}_k)_{k \geq 0}$.

\begin{theorem}\cite[Theorem~8.5]{lotov2013interactive} \label{thm:lotovlimit} \label{thm:Lotovlimit}
	Let $A\subseteq \R^q$ be a nonempty convex compact set and $r>0$. For an $H(r,A)$-sequence $(A_k)_{k \geq 0}$, we have
	\begin{align}\notag
		\lim_{k\to \infty} \delta^H(A_k, A) = 0.
	\end{align}
\end{theorem}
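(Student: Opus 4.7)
The plan is to prove the convergence via the nested intersection $A_\infty := \bigcap_{k \geq 0} A_k$. Since each $A_k$ is a polytope (in particular compact and convex) with $A_{k+1} \subseteq A_k$ and $A \subseteq A_k$, the set $A_\infty$ is a nonempty compact convex set sandwiched between $A$ and $A_0$. I would argue in three steps: (i) $\delta^H(A_k, A_\infty) \to 0$ from nested compactness alone, (ii) $\delta^H(A_k, A_{k+1}) \to 0$ by the triangle inequality applied through $A_\infty$, and (iii) the defining $H(r,A)$-inequality then forces $\delta^H(A_k, A) \to 0$.

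For step (i), since $A_\infty \subseteq A_k$, we have $\delta^H(A_k, A_\infty) = \sup_{y \in A_k} d(y, A_\infty)$. Suppose for contradiction that there exist $\epsilon > 0$, an increasing sequence $(k_j)_{j \in \N}$, and points $y_{k_j} \in A_{k_j}$ with $d(y_{k_j}, A_\infty) \geq \epsilon$. All such points lie in the compact set $A_0$, so a further subsequence converges to some $y^\ast$. For any fixed $i \in \N$, we have $y_{k_j} \in A_{k_j} \subseteq A_i$ for all sufficiently large $j$, and closedness of $A_i$ yields $y^\ast \in A_i$ for every $i$; hence $y^\ast \in A_\infty$ and $d(y^\ast, A_\infty) = 0$, which contradicts $d(y_{k_j}, A_\infty) \geq \epsilon$ combined with the continuity of $d(\cdot, A_\infty)$. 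Thus $\delta^H(A_k, A_\infty) \to 0$. Step (ii) follows at once from $\delta^H(A_k, A_{k+1}) \leq \delta^H(A_k, A_\infty) + \delta^H(A_\infty, A_{k+1}) \to 0$, and step (iii) is the one-line deduction $\delta^H(A_k, A) \leq r^{-1} \delta^H(A_k, A_{k+1}) \to 0$.

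The only genuinely subtle step is (i): the statement that a decreasing sequence of nonempty compact sets converges in the Hausdorff metric to its intersection. This hinges essentially on sequential compactness of $A_0$ and would fail without boundedness, which is why the definition of a cutting-generated sequence requires the $A_k$ to be polytopes rather than general polyhedra. Note that the proof does not rely on convexity of the $A_k$ beyond giving $A_\infty$ a pleasant structure, nor does it exploit the specific geometry of the supporting halfspaces used in the cuts; the entire geometric content of the cutting construction is compressed into the single quantitative inequality $\delta^H(A_k, A_{k+1}) \geq r \delta^H(A_k, A)$.
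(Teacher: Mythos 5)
Your argument is correct. Note, however, that the paper does not prove this statement at all: it is quoted verbatim from \cite{lotov2013interactive} (Theorem~8.5) and used as a black box, so there is no in-paper proof to compare against. Your self-contained derivation is sound: each $A_k$ is a polytope, hence compact, and the nesting $A \subseteq A_{k+1} \subseteq A_k \subseteq A_0$ makes $A_\infty = \bigcap_k A_k$ a nonempty compact set; the sequential-compactness argument for $\delta^H(A_k, A_\infty) \to 0$ is the standard fact that a decreasing sequence of nonempty compact sets Hausdorff-converges to its intersection; and the final two steps are immediate (for step (ii) you could even skip the triangle inequality, since $A_\infty \subseteq A_{k+1}$ gives $\delta^H(A_k, A_{k+1}) \le \delta^H(A_k, A_\infty)$ directly). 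The one reference-style proof I would contrast this with is the packing argument implicit in \cite{kamenev1992class}: if $\delta^H(A_k,A)$ did not tend to $0$, the cutting inequality would produce points $y_k \in A_k$ with $d(y_k, A_{k+1})$ bounded below, hence an infinite uniformly separated family inside the compact set $A_0$, a contradiction. Both routes compress all geometric content of the cuts into the single inequality $\delta^H(A_k, A_{k+1}) \ge r\,\delta^H(A_k, A)$, exactly as you observe, so your proof is a legitimate and arguably cleaner substitute for the citation.
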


Next, we relate the Hausdorff distance between each approximation produced by \Cref{alg1} and the upper image to the one between their compact versions in \eqref{eq:defA}.

\begin{lemma} \label{lem:PkAk}
	Suppose that \Cref{assmp:C_poly} holds. Then, for every $k\geq 0$, we have
	\begin{align}\notag 
		\delta^H(\bar{\mathcal{P}}_k^\out + C, \mathcal{P}) \leq \delta^H(\mathcal{A}_k, \mathcal{A}). 
	\end{align}
\end{lemma}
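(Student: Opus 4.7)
The plan is to use the alternative characterization of Hausdorff distance via $\epsilon$-fattenings, combined with \Cref{rem:Pequal}, which states that $\mathcal{P}=(\mathcal{P}\cap S(\gamma))+C=\mathcal{A}+C$, and \Cref{rem:Scontains}, which yields $\bar{\mathcal{P}}^\out_k\supseteq\mathcal{P}\cap S(\gamma)=\mathcal{A}$ and hence $\bar{\mathcal{P}}^\out_k+C\supseteq\mathcal{P}$. The intuition is that Minkowski-adding the same cone $C$ to both arguments of a Hausdorff distance cannot increase that distance, since the cone can be absorbed on both sides of the two defining inclusions.

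More concretely, I would fix an arbitrary $\epsilon>\delta^H(\mathcal{A}_k,\mathcal{A})$. By the characterization of Hausdorff distance given after \eqref{eq:H} in \Cref{sec:Prelim}, we have the two inclusions $\mathcal{A}_k\subseteq\mathcal{A}+\mathbb{B}_\epsilon(0)$ and $\mathcal{A}\subseteq\mathcal{A}_k+\mathbb{B}_\epsilon(0)$. Adding $C$ to both sides of each inclusion yields
\[
\mathcal{A}_k+C\subseteq \mathcal{A}+C+\mathbb{B}_\epsilon(0)=\mathcal{P}+\mathbb{B}_\epsilon(0),
\]
\[
\mathcal{P}=\mathcal{A}+C\subseteq \mathcal{A}_k+C+\mathbb{B}_\epsilon(0)=\bar{\mathcal{P}}^\out_k+C+\mathbb{B}_\epsilon(0),
\]
where the first line also uses \Cref{rem:Pequal}. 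Appealing to the same characterization of Hausdorff distance in the reverse direction gives $\delta^H(\bar{\mathcal{P}}^\out_k+C,\mathcal{P})\leq \epsilon$. Letting $\epsilon\downarrow\delta^H(\mathcal{A}_k,\mathcal{A})$ then gives the desired inequality.

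There is no real obstacle here; the argument is a direct and routine manipulation of the $\epsilon$-fattening characterization of Hausdorff distance, relying only on the already-established identity $\mathcal{P}=\mathcal{A}+C$ from \Cref{rem:Pequal}. The only point to be mindful of is to use the strict inequality $\epsilon>\delta^H(\mathcal{A}_k,\mathcal{A})$ and then pass to the limit, so that one stays within the inclusion form used in the preliminaries without assuming the infimum is attained.
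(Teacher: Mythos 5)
Your proof is correct, but it takes a genuinely different route from the paper's. The paper argues via attainment at vertices: it invokes \cite[Lemma 5.3]{ararat2021norm} to write $\delta^H(\mathcal{A}_k,\mathcal{A})=\max_{v\in\ext\mathcal{A}_k}d(v,\mathcal{A})$, then uses \Cref{lem:PconeP} (that $\ext(\mathcal{A}_k+C)\subseteq\ext\mathcal{A}_k$) together with $d(v,\mathcal{A})\geq d(v,\mathcal{A}+C)$ to bound $\delta^H(\mathcal{A}_k+C,\mathcal{A}+C)$ from above, and finally identifies $\mathcal{A}+C$ with $\mathcal{P}$ via \Cref{rem:Pequal}. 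You instead prove the general monotonicity fact $\delta^H(A+C,B+C)\leq\delta^H(A,B)$ directly from the $\epsilon$-fattening characterization, which needs only that Minkowski addition preserves inclusions and that $\mathbb{B}_\epsilon(0)+C=C+\mathbb{B}_\epsilon(0)$ commute inside the sums. Your argument is more elementary and more general: it does not use polyhedrality or compactness of $\mathcal{A}_k$, convexity, or the extreme-point machinery, and your handling of the infimum (taking $\epsilon>\delta^H(\mathcal{A}_k,\mathcal{A})$ strictly and passing to the limit) is careful and correct. What the paper's approach buys is consistency with the vertex-based computations it reuses throughout \Cref{sec:conv} (the same Lemma 5.3 appears in \Cref{prop:H_seq_eq}), but for this particular inequality your route is cleaner.
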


\begin{proof}
	Let $k\geq 0$. By \cite[Lemma 5.3]{ararat2021norm}, $\delta^H(\mathcal{A}_k, \mathcal{A})$ is attained at a vertex of $\mathcal{A}_k$. Hence,
	\begin{align}\notag 
		\delta^H(\mathcal{A}_k, \mathcal{A}) = \max_{v \in \ext \mathcal{A}_k} \ d(v, \mathcal{A}).
	\end{align}
	By \Cref{lem:PconeP}, we have $\ext (\mathcal{A}_k+C)\subseteq \ext \mathcal{A}_k$. Hence,
	\begin{align}\notag 
		\delta^H(\mathcal{A}_k, \mathcal{A}) & = \max_{v \in \ext \mathcal{A}_k} \ d(v, \mathcal{A}) \\ \notag
		& \geq \max_{v \in \ext \mathcal{A}_k} \ d(v, \mathcal{A}+C) \\ \notag
		& \geq \max_{v \in\ext (\mathcal{A}_k+C)} \ d(v, \mathcal{A}+C) \\ \notag
		& = \delta^H(\mathcal{A}_k + C, \mathcal{A}+C) \\ \notag
		& = \delta^H(\bar{\mathcal{P}}_k^\out + C, (\mathcal{P} \cap S(\gamma)) + C),
	\end{align}
	where the penultimate equality is a consequence of \Cref{rem:Scontains}, \Cref{rem:Pequal}, and \cite[Lemma 5.3]{ararat2021norm}. Moreover, by \Cref{rem:Pequal}, we have $\mathcal{P} = (\mathcal{P} \cap S(\gamma)) + C$. Hence, $\delta^H(\mathcal{A}_k, \mathcal{A}) \geq \delta^H(\bar{\mathcal{P}}_k^\out + C, \mathcal{P})$.
\end{proof}

\begin{remark}
	By \Cref{cor:H_seq}, $(\mathcal{A}_k)_{k \geq 0}$ is an $H(1,\mathcal{A})$-sequence of polytopes. Then, \Cref{thm:lotovlimit} and \Cref{lem:PkAk} together imply the following result, which is already discussed in \Cref{cor:limit}:
	\begin{align}\notag
		\lim_{k\to \infty} \delta^H(\bar{\mathcal{P}}_k^\out + C, \mathcal{P}) = \lim_{k\to \infty} \delta^H(\mathcal{A}_k, \mathcal{A}) = 0.
	\end{align}	
\end{remark}

We restate an important theorem from \cite{lotov2013interactive} on convergence rates. Its proof can be found in \cite{kamenev1992class}.

\begin{theorem}\cite[Theorem~8.6]{lotov2013interactive} \label{thm:LotovH}
	Let $A\subseteq \R^q$ be a nonempty convex compact set and $r>0$. Let $(A_k)_{k\geq 0}$ be an $H(r,A)$-sequence of cutting. Then, for every $\epsilon \in (0,1)$, there exists $N\geq 0$ such that
	such that
	\begin{align}\notag
		\delta^H(A_k, A) \leq (1 + \epsilon) \lambda(r,A) k^{\frac{1}{1-q}},
	\end{align}
	holds for every $k \geq N$. Here, $\lambda(r,A)$ depends on the topological properties of $A$ and can be found in \cite[Theorem 2]{kamenev1992class}.
\end{theorem}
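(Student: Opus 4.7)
The plan is to establish a bound on $\rho_k := \delta^H(A_k, A)$ via a packing argument in a neighborhood of $\partial A$. Note that by the $H(r,A)$ property together with $A \subseteq A_{k+1} \subseteq A_k$, the sequence $(\rho_k)_{k \geq 0}$ is nonincreasing, and in each iteration the supporting cut $H_k$ removes a piece of $A_k$ whose Hausdorff distance from $A_{k+1}$ is at least $r\rho_k$.

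First, I would translate the $H(r,A)$ inequality into a ball-inclusion statement: there exists a witness point $p_k \in A_k \setminus A_{k+1}$ with $d(p_k, A_{k+1}) \geq r\rho_k$, and, by exploiting the fact that $A_{k+1} = A_k \cap H_k$ with $H_k$ supporting the convex body $A$, one can show that a ball $\mathbb{B}_{\alpha r \rho_k}(p'_k)$ of radius proportional to $r\rho_k$ (with $\alpha>0$ a dimension-dependent constant and $p'_k$ a carefully chosen interior point of $A_k \setminus A_{k+1}$) is contained in $A_k \setminus A_{k+1}$. Since the differences $A_k \setminus A_{k+1}$ are pairwise disjoint across $k$, so are the balls $B_0, B_1, B_2, \ldots$.

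Second, to obtain the sharp rate $O(k^{1/(1-q)})$, a naive volume comparison against a thick shell around $A$ would yield only $O(k^{-1/q})$. Instead, I would use a surface-area (projection) argument: the metric projection onto the convex body $A$ maps each $B_k$ to a piece of $\partial A$ whose $(q-1)$-dimensional measure is at least proportional to $(r\rho_k)^{q-1}$, and the images for different $k$ remain essentially disjoint since each ball sits in a thin shell around $\partial A$. Bounding the total projected measure by the surface area of $A$ gives an inequality of the form
\begin{equation*}
\sum_{j=0}^{k-1} c_{q-1} (r \rho_j)^{q-1} \leq C \cdot \text{(surface area of } A\text{)},
\end{equation*}
where $c_{q-1}, C > 0$ depend only on $q$. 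Using the monotonicity $\rho_j \geq \rho_k$ for $j \leq k$, this rearranges to $k (r\rho_k)^{q-1} \leq C'$, i.e., $\rho_k \leq \lambda(r,A) k^{1/(1-q)}$, with $\lambda(r,A)$ of the form given in \cite[Theorem 2]{kamenev1992class}. The factor $(1+\epsilon)$ and threshold $N$ in the statement arise because the projected images are only approximately disjoint at finite $k$, so an asymptotic analysis is needed to absorb lower-order terms.

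The main obstacle is making the projection step fully rigorous when $A$ is merely a convex compact body (not necessarily strictly convex or smooth): one must control both the near-disjointness of the projected images and the precise dimension-dependent constants entering $\lambda(r,A)$. This is handled most cleanly through the support function of $A$ and a careful partition of $\partial A$ into approximately spherical caps, and for the explicit constant I would defer to Kamenev's original argument in \cite{kamenev1992class}.
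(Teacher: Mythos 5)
This statement is quoted verbatim from the literature: the paper offers no proof of it at all, stating only that ``its proof can be found in \cite{kamenev1992class}.'' So your sketch is being measured against Kamenev's original argument (a version of which the paper does reproduce, for the Euclidean case, in its Section 7). Your overall architecture --- turn the $H(r,A)$ inequality into a packing statement, count the packing against a $(q-1)$-dimensional quantity, and rearrange $k(r\rho_k)^{q-1}\leq C'$ into $\rho_k\leq\lambda(r,A)k^{1/(1-q)}$ --- is the right shape, and your final counting inequality is exactly the one that appears in the real proof.

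However, the specific mechanism you propose for getting a $(q-1)$-dimensional count is flawed. You claim that the metric projection onto $A$ maps each ball $B_k\subseteq A_k\setminus A_{k+1}$ to a subset of $\bd A$ of $(q-1)$-measure at least proportional to $(r\rho_k)^{q-1}$, with the images ``essentially disjoint.'' Neither claim survives scrutiny when $A$ is merely convex compact. The nearest-point projection onto a convex body is nonexpansive but can collapse measure entirely: if $A$ is a polytope and $B_k$ lies inside the normal cone of a vertex $v$ of $A$, the whole ball projects to the single point $v$, so the projected measure is zero, not $\gtrsim(r\rho_k)^{q-1}$. Disjointness also fails: two disjoint balls stacked along the same outward normal project onto overlapping (indeed identical) caps. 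Your remark that the images are ``only approximately disjoint at finite $k$'' does not repair this, because the failure is not a lower-order effect that vanishes asymptotically --- it is structural at non-smooth boundary points. (There is also a secondary issue in your first step: the witness point $p_k$ realizing $d(p_k,A_{k+1})\geq r\rho_k$ is typically a vertex of $A_k$, so no ball centered at it lies in $A_k$; producing an interior ball of radius $\alpha r\rho_k$ requires inradius/asphericity control that you do not supply.)

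The standard fix, which is what Kamenev does and what the paper itself imitates in Lemmas 7.8--7.12 and the proof of Theorem 7.3, is to avoid projecting removed volume altogether. Instead one records, for each cut, the touching point $y^k\in\bd A$ of the supporting hyperplane $H_k$ together with its unit normal $w^k$, and shows that the shifted points $z^k:=y^k-\eta w^k$ all lie exactly on $\bd(A+\mathbb{B}_\eta(0))$ and are pairwise separated by a distance proportional to $r\rho_k$ (this is where the $H(r,A)$ inequality and convexity of $A$ enter, via an elementary two-hyperplane estimate of the type in Lemma 7.5). One then invokes a cap-counting bound on the boundary of the \emph{inflated} body (Lemma 7.12) to get $k\lesssim(R/(r\rho_k))^{q-1}$. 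This sidesteps both of the defects above, because the packing points are genuinely distinct boundary points of a single convex surface rather than projections of full-dimensional regions. If you replace your projection step with this shifted-support-point packing, the rest of your outline goes through.
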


We conclude this section with an application of the previous theorem in our setting.

\begin{corollary}
	Suppose that \Cref{assmp:C_poly} holds. Then, the approximation error obtained through the iterations of \Cref{alg1} decreases by the order of $\mathcal{O}(k^{1/(1-q)})$.
\end{corollary}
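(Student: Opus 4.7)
The proof should be a short combination of the pieces already assembled in the section, with essentially no new technical content. The plan is to chain together \Cref{cor:H_seq}, \Cref{thm:LotovH}, and \Cref{lem:PkAk} and interpret the result as the claimed approximation error bound.

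First, I would recall that \Cref{cor:H_seq} shows the sequence $(\mathcal{A}_k)_{k\geq 0}$ defined in \eqref{eq:defA} is an $H(1,\mathcal{A})$-sequence of cutting for the nonempty convex compact set $\mathcal{A} = \mathcal{P}\cap S(\gamma)$ (compactness is by \cite[Lemma 6.3]{ararat2021norm} and $\mathcal{P}\cap S(\gamma) \neq \emptyset$ by \Cref{rem:Scontains}). Hence the hypotheses of \Cref{thm:LotovH} are met with $r=1$. Applying that theorem with, say, any fixed $\epsilon\in(0,1)$ yields the existence of $N\geq 0$ and a constant $\bar{\lambda}:=(1+\epsilon)\lambda(1,\mathcal{A})$ such that
\begin{equation*}
\delta^H(\mathcal{A}_k, \mathcal{A}) \leq \bar{\lambda}\, k^{1/(1-q)} \quad \text{for every } k\geq N.
\end{equation*}

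Next, I would invoke \Cref{lem:PkAk}, which bounds the relevant approximation error by the compact Hausdorff distance, giving
\begin{equation*}
\delta^H(\bar{\mathcal{P}}^\out_k + C, \mathcal{P}) \leq \delta^H(\mathcal{A}_k, \mathcal{A}) \leq \bar{\lambda}\, k^{1/(1-q)}
\end{equation*}
for all $k\geq N$. Since $1/(1-q)<0$ when $q\geq 2$, this is precisely the $\mathcal{O}(k^{1/(1-q)})$ rate claimed.

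There is essentially no obstacle here: all the nontrivial work was done earlier when (i) proving that consecutive Hausdorff distances equal the Hausdorff distance to the target set (\Cref{prop:H_seq_eq}), which upgrades the sequence to an $H(1,\mathcal{A})$-sequence, and (ii) establishing the monotonicity of Hausdorff distances under adding the cone $C$ (\Cref{lem:PkAk}). The only minor point worth stating explicitly is that the approximation error in question is $\delta^H(\bar{\mathcal{P}}^\out_k+C,\mathcal{P})$, as highlighted in \Cref{rem:Scontains}, so that \Cref{lem:PkAk} is the right bridge between the compact setting of \Cref{thm:LotovH} and the unbounded upper image $\mathcal{P}$.
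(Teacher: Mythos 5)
Your proposal is correct and follows exactly the same route as the paper's own proof: \Cref{cor:H_seq} gives the $H(1,\mathcal{A})$-sequence property, \Cref{thm:LotovH} gives the $k^{1/(1-q)}$ bound on $\delta^H(\mathcal{A}_k,\mathcal{A})$, and \Cref{lem:PkAk} transfers this to $\delta^H(\bar{\mathcal{P}}^\out_k+C,\mathcal{P})$. No gaps; the additional remarks on compactness and on which quantity constitutes the approximation error are consistent with the paper.
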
	

\begin{proof}
	By \Cref{cor:H_seq}, the sequence of outer approximating polytopes $(\mathcal{A}_k)_{k \geq 0}$ is an $H(1,\mathcal{A})$-sequence of cutting. Then, by \Cref{lem:PkAk} and \Cref{thm:LotovH}, we get
	\begin{align}\notag 
		\delta^H(\bar{\mathcal{P}}_k^\out + C, \mathcal{P}) \leq \delta^H(\mathcal{A}_k, \mathcal{A}) \leq \lambda(r,\mathcal{A}) k^{\frac{1}{1-q}}
	\end{align}
	for all sufficiently large $k$. Hence, the result follows.
\end{proof}

\section{Improved convergence rate under Euclidean norm}\label{sec:conv_l2}

In this section, we find an improved {estimate of} convergence rate {for} \Cref{alg1} when the $\ell_2$-norm is used in the scalarizations. {The statement of \Cref{thm:Lotov} below is similar to those of \cite[Theorem~8.14]{lotov2013interactive} and \cite[Corollary~1]{kamenev2002conjugate}. However, our statement is valid for the outer approximating polytopes produced by \Cref{alg1} instead of the polytopes described in these references. We need the following assumption for the improved convergence rate.} 

\begin{assumption}\label{assump:norm}
	{$\norm{\cdot}$ is the $\ell_2$-norm, that is, $\norm{z}=\sqrt{z^{\mathsf{T}}z}$ for every $z\in\R^q$.}
\end{assumption}

The next theorem is the main result of this section.

\begin{theorem}\label{thm:Lotov} 
	Suppose that Assumptions~\ref{assmp:C_poly}, \ref{assump:norm} hold and consider the sequence of outer approximating polytopes $(\bar{\mathcal{P}}^\out_k)_{k\geq 0}$ produced by \Cref{alg1}.	
	Then, for every $\epsilon\in(0,1)$, there exists $N\geq 0$
	such that
	\begin{align}\notag
		\delta^H(\bar{\mathcal{P}}^\out_{k-1}, \mathcal{P} \cap S(\gamma)) \leq (1 + \epsilon) \bar{\lambda}(\mathcal{P} \cap S(\gamma)) k^{\frac{2}{1-q}}
	\end{align}
	holds for every $k\geq N$. Here, $\bar{\lambda}(\mathcal{P}\cap S(\gamma))$ is a constant that depends on the topological properties of $\mathcal{P}\cap S(\gamma)$ and can be seen in \Cref{notation:constants}. 
	
	\begin{notation}\label{notation:constants} Let $A \subseteq \R^q$ be a nonempty convex compact set. 
		\begin{enumerate}[(a)]
			\item $R(A)$ is the radius of the smallest ball circumscribed around $A$, $r(A)$ is the radius of the largest ball inscribed in $A$ and $\omega(A)\coloneqq \frac{R(A)}{r(A)}$ is the asphericity of $A$. 
			\item The hypervolume of the unit ball $\mathbb{B}_1(0) \subseteq \R^q$ is denoted by $\pi_q$ and $\bar{\lambda}(A)\coloneqq 16R(A)\of{\frac{q \pi_q}{\pi_{q-1}}}^{\frac{2}{q-1}}$. 
		\end{enumerate} 
	\end{notation}
\end{theorem}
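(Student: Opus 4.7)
The plan is to recognize that, under the $\ell_2$-norm assumption, \Cref{alg1} realizes precisely Kamenev's H-method of polyhedral approximation applied to the compact body $\mathcal{A}\coloneqq\mathcal{P}\cap S(\gamma)$, and then to adapt the quantitative convergence estimate for that method (\cite[Theorem~8.14]{lotov2013interactive}, \cite[Corollary~1]{kamenev2002conjugate}) to the outer-approximating polytopes $\mathcal{A}_k=\bar{\mathcal{P}}^\out_k$ produced by \Cref{alg1}. The first step is the structural identification of each cut: from \Cref{lem:Bset}(a)-(b), for any vertex $v=v^k\notin\mathcal{P}$ we have $\norm{z^v}_2=(\tilde w^v)^{\mathsf T} z^v$ and $\norm{\tilde w^v}_2=1$, so the equality case of the Cauchy-Schwarz inequality forces $\tilde w^v=z^v/\norm{z^v}_2$. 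Hence $y^{v^k}$ is the Euclidean projection of $v^k$ onto $\mathcal{A}$, and $\bd\mathcal{H}_k$ is the supporting hyperplane of $\mathcal{A}$ at $y^{v^k}$ orthogonal to $v^k-y^{v^k}$; this is exactly the cutting rule of the H-method.

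Setting $\Delta_k\coloneqq\delta^H(\mathcal{A}_k,\mathcal{A})=\norm{z^{v^k}}_2$ (by the proof of \Cref{prop:H_seq_eq}), the heart of the argument is a surface-area-covering estimate on $\bd\mathcal{A}$. For each step $k$, let $D_k$ denote the $(q-1)$-dimensional cross-section of the cap $\mathcal{A}_k\setminus\mathcal{A}_{k+1}$ by the hyperplane parallel to $\bd\mathcal{H}_k$ at depth $\Delta_k/2$. The key geometric lemma to establish is that $D_k$ contains a Euclidean $(q-1)$-ball of radius at least $c_q\sqrt{R(\mathcal{A})\Delta_k}$ for a dimensional constant $c_q$, obtained by a secant-plane argument inside the circumscribed ball of $\mathcal{A}$. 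Projecting $D_k$ onto $\bd\mathcal{A}$ along the direction $\tilde w^{v^k}$ produces a surface patch on $\bd\mathcal{A}$ of $(q-1)$-area at least a dimensional constant times $(R(\mathcal{A})\Delta_k)^{(q-1)/2}$. The crucial combinatorial input is that these patches are pairwise essentially disjoint: any overlap at step $j$ with a previously generated patch would contradict the maximality of $v^j$ as a farthest vertex in \eqref{eq:zmax}. Hence their total $(q-1)$-area is bounded above by the surface area of $\mathcal{A}$, which is at most $q\pi_q R(\mathcal{A})^{q-1}$.

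Summing these per-step area contributions gives an inequality of the form
\[
\sum_{j=1}^{k}c_q^{\,q-1}\pi_{q-1}\bigl(R(\mathcal{A})\Delta_{j-1}\bigr)^{(q-1)/2}\;\leq\;q\pi_q\, R(\mathcal{A})^{q-1},
\]
and since $\Delta_j$ is nonincreasing in $j$ (from $\mathcal{A}_{j+1}\subseteq\mathcal{A}_j$ together with the farthest-vertex rule), the left-hand side is at least $k$ times its smallest term, yielding $k\Delta_{k-1}^{(q-1)/2}\leq \mathrm{const}$. Rearranging and tracking constants so that they agree with the explicit form of $\bar\lambda(\mathcal{A})$ in \Cref{notation:constants} produces $\Delta_{k-1}\leq\bar\lambda(\mathcal{A})\,k^{2/(1-q)}$ for all sufficiently large $k$; the factor $(1+\epsilon)$ in the theorem accommodates the lower-order corrections arising when the cap/secant estimate is not tight, and it is the reason one must restrict to $k\geq N$ depending on $\epsilon$.

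The main obstacle is the two geometric estimates underlying the previous paragraph: the pairwise essential disjointness of the boundary patches, and the lower bound on the cross-sectional radius. In \cite{kamenev1992class,lotov2013interactive} the body being approximated is assumed strictly convex (and often smooth), which makes both estimates immediate via a local Taylor expansion of the support function at the contact point. Here $\mathcal{A}=\mathcal{P}\cap S(\gamma)$ is merely convex and compact and indeed has a flat face on $\bd S(\gamma)$, so neither property is automatic. The disjointness must be argued directly from the maximality of $v^j$ coupled with the perpendicularity of the cut established in paragraph one, and the radius bound must rely on the existence of the inscribed and circumscribed balls of radii $r(\mathcal{A})$ and $R(\mathcal{A})$ rather than on pointwise curvature. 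This is exactly the ``from scratch'' analysis flagged in the introduction; once it is in place, the summation and constant-matching deliver the claimed rate.
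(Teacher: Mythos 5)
Your setup is right — the identification $\tilde w^{v}=z^{v}/\norm{z^{v}}$ via the equality case of Cauchy--Schwarz, the fact that $\delta^H(\bar{\mathcal{P}}^\out_k,\mathcal{A})=\norm{z^{v^k}}$, and the final summation $k\,\Delta_{k-1}^{(q-1)/2}\leq\mathrm{const}$ are all consistent with what is needed. But the two geometric claims that carry the whole argument are not established, and as stated they are false for the body at hand. Both claims are area estimates on $\bd\mathcal{A}$ itself, where $\mathcal{A}=\mathcal{P}\cap S(\gamma)$: (i) that the cross-section of the cap $\mathcal{A}_k\setminus\mathcal{A}_{k+1}$ at depth $\Delta_k/2$ contains a $(q-1)$-ball of radius $c_q\sqrt{R(\mathcal{A})\Delta_k}$, and (ii) that the projected patches on $\bd\mathcal{A}$ are essentially disjoint. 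Claim (i) does not follow from a secant-plane argument in the circumscribed ball: that ball contains $\mathcal{A}$, so its caps give no lower bound on the lateral extent of the cap of $\mathcal{A}_k$, and near a corner of $\mathcal{A}$ the supporting hyperplanes forming the edges of $\mathcal{A}_k$ can touch $\mathcal{A}$ at distance only $O(\Delta_k)$ from $v^k$, making the cap's mid-cross-section of radius $O(\Delta_k)\ll\sqrt{R\Delta_k}$. Claim (ii) fails for the same reason: at a non-smooth boundary point $p$ of $\mathcal{A}$ the normal cone is full-dimensional, so infinitely many vertices $v^k$ project onto the same contact point $p$ and generate distinct cuts through $p$; their patches on $\bd\mathcal{A}$ all cluster at $p$ and overlap, and the proposed justification (maximality of $v^j$ in \eqref{eq:zmax}) plays no role in preventing this. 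This is not a corner case one can wave away: $\mathcal{A}=\mathcal{P}\cap S(\gamma)$ always has a flat face on $\bd S(\gamma)$ and hence a non-smooth rim, which is exactly why the strict-convexity hypothesis of \cite{kamenev1992class,lotov2013interactive} cannot be invoked.

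The paper's proof avoids both problems by never measuring area on $\bd\mathcal{A}$. It fixes $\eta=R(\mathcal{A})$ and attaches to each cut the single point $y^{v^k}-\eta\tilde w^{v^k}$, which lies on $\bd\of{\mathcal{A}+\mathbb{B}_\eta(0)}$ (\Cref{rem:Zkbd}); the inflation by $\mathbb{B}_\eta(0)$ smooths the corners, so two cuts touching at the same corner with different normals still produce shifted points separated by $\approx\eta\norm{w-w'}$. The two-point estimate of \Cref{lem:lemma1} combined with \Cref{lem:lemma2} shows these points form the base of a $\tfrac12(1-\epsilon)\sqrt{\eta\Delta_{k-1}}$-packing (\Cref{lem:lemma3}); the crucial input there is not the farthest-vertex rule but the fact that $v^{k-1}$ satisfies every previously generated cut while violating its own by exactly $\Delta_{k-1}$. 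A cardinality bound for packings on the boundary of a convex compact set (\Cref{lem:lemma4}, which needs no smoothness) together with the linear growth $\abs{\mathcal{Z}_k}=J+1+k$ (\Cref{lem:cardZk}) then yields $k\lesssim\of{R/\sqrt{\eta\Delta_{k-1}}}^{q-1}$, i.e.\ the claimed rate. In short: replace your area-covering of caps by a point-packing of inflated contact points; without that (or an equivalent device handling the non-smoothness of $\mathcal{P}\cap S(\gamma)$), the proposal does not go through.
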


We present five lemmas to prepare for the proof of \Cref{thm:Lotov}. While two of these lemmas are directly paraphrased from \cite{lotov2013interactive}, we provide full proofs for the remaining three.

\begin{lemma} \cite[Lemma 8.15]{lotov2013interactive} \label{lem:lemma1}
	Suppose that \Cref{assump:norm} holds. Let $y, y^{\prime} \in \R^q$ and $w, w^{\prime} \in \mathbb{S}^{q-1}$ be such that $w^{\mathsf{T}}w^\prime > 0$, $y^{\prime} \in \mathcal{H}(w, y)$, and $y \in \mathcal{H}(w^{\prime}, y^{\prime})$. Then, for every $\eta>0$, we have
	\begin{align}\notag
		d(y, \bd \mathcal{H}(w^{\prime}, y^{\prime})) \leq \frac{\norm{(y - \eta w) - (y^{\prime} - \eta w^{\prime})}^2}{\eta}.
	\end{align}
\end{lemma}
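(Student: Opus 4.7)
My plan is to reduce the bound to a short algebraic calculation and then verify non-negativity of a handful of explicit terms, using crucially that $\|\cdot\|$ is the Euclidean norm. First, I would observe that, since $w' \in \mathbb{S}^{q-1}$ and $y \in \mathcal{H}(w', y')$, the point-to-hyperplane distance formula gives
\[
d(y, \bd \mathcal{H}(w', y')) = (w')^{\mathsf{T}}(y - y') \geq 0,
\]
so the claim is equivalent to
\[
\eta\,(w')^{\mathsf{T}}(y - y') \leq \|(y - y') - \eta(w - w')\|^2.
\]

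Next, I would expand the squared norm on the right, use $\|w\|^2 = \|w'\|^2 = 1$ to write $\|w-w'\|^2 = 2(1 - w^{\mathsf{T}} w')$, and then move the left-hand side over. The inequality should collapse to the assertion that a sum of four explicit quantities is non-negative: $\|y-y'\|^2$; a positive multiple of $w^{\mathsf{T}}(y' - y)$, which is $\geq 0$ because $y' \in \mathcal{H}(w, y)$; a positive multiple of $(w')^{\mathsf{T}}(y - y')$, which is $\geq 0$ because $y \in \mathcal{H}(w', y')$; and $2\eta^2(1 - w^{\mathsf{T}} w')$, which is $\geq 0$ by Cauchy--Schwarz. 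Summing these gives the result.

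The proof is essentially a one-line expansion with no real obstacle; the only mildly delicate point is to carefully track signs during the expansion so that the four resulting terms are manifestly non-negative and can be recognized as consequences of the two halfspace inclusions. I note in passing that the stated hypothesis $w^{\mathsf{T}} w' > 0$ is stronger than what the estimate itself requires (only $w^{\mathsf{T}} w' \leq 1$ is used), so it is presumably included to guarantee the geometric configuration under which this lemma will be invoked in the subsequent proof of \Cref{thm:Lotov}.
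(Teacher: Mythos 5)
Your proof is correct. Note that the paper itself offers no argument for this lemma: it is one of the two lemmas explicitly ``directly paraphrased'' from Lotov et al.\ (Lemma~8.15 there), so there is nothing in the text to compare against; your computation supplies a self-contained verification. Checking the algebra: with $\|w'\|=1$ the distance is $d(y,\bd\mathcal{H}(w',y'))=(w')^{\mathsf{T}}(y-y')\ge 0$, and expanding
\begin{align}\notag
\|(y-y')-\eta(w-w')\|^2-\eta\,(w')^{\mathsf{T}}(y-y')
= \|y-y'\|^2+2\eta\,w^{\mathsf{T}}(y'-y)+\eta\,(w')^{\mathsf{T}}(y-y')+2\eta^2\bigl(1-w^{\mathsf{T}}w'\bigr),
\end{align}
where the four terms are nonnegative by, respectively, positivity of the norm, $y'\in\mathcal{H}(w,y)$, $y\in\mathcal{H}(w',y')$, and Cauchy--Schwarz. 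Your side remark is also accurate: the hypothesis $w^{\mathsf{T}}w'>0$ is not needed for this inequality (only $w^{\mathsf{T}}w'\le 1$ enters), and indeed in the paper's \Cref{lem:lemma2} the case $w^{\mathsf{T}}w'\le 0$ is dispatched separately with the $\eta\sqrt{2}$ bound, so \Cref{lem:lemma1} is only ever invoked under $w^{\mathsf{T}}w'>0$.
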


The next lemma is an analogue of \cite[Lemma~8.17]{lotov2013interactive} in our setting.

\begin{lemma} \label{lem:lemma2}
	Suppose that Assumptions~\ref{assmp:C_poly}, \ref{assump:norm} hold.	Let $v\in S(\gamma)\setminus \mathcal{P}$ and set $y\coloneqq y^v$, $w\coloneqq \tilde{w}^v$. Let $w^\prime \in \mathbb{S}^{q-1}\cap C^+$, $y^\prime\in \mathcal{P}\cap S(\gamma)$ be such that $(w^\prime)^{\mathsf{T}}y^\prime=\inf_{z\in \mathcal{P}\cap S(\gamma)} (w^\prime)^{\mathsf{T}}z$. Assume that $v \in \mathcal{H}(w^\prime, \mathcal{P} \cap S(\gamma))$. Then, for every $\eta>0$, we have
	\begin{align}\notag
		\norm{(y - \eta w) - (y^\prime - \eta w^\prime)} \geq \min \{\eta \sqrt{2}, \sqrt{\eta h} -  h\},
	\end{align}
	where $h \coloneqq w^\mathsf{T} (y -v)$.
\end{lemma}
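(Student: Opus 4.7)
The plan is to split on the sign of $\tau := w^{\mathsf{T}} w'$: when $\tau \leq 0$, the term $\eta^2(2 - 2\tau)$ alone delivers the $\eta\sqrt{2}$ bound; when $\tau > 0$, a more careful expansion combined with three scalar inequalities yields the $\sqrt{\eta h} - h$ bound.

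First I would record a geometric reformulation specific to the Euclidean norm. \Cref{lem:Bset}(a,b), together with the Cauchy--Schwarz inequality, forces $\|w\| = 1$ and $w = z^v/\|z^v\|$, so that $y - v = hw$, equivalently $v = y - hw$. I would then derive three scalar inequalities: (i) $\alpha := w^{\mathsf{T}}(y' - y) \geq 0$, because $\mathcal{H}(w, y)$ is a supporting halfspace of $\mathcal{P}\cap S(\gamma)$ at $y$ by \Cref{prop:supp_halfspace} and $y' \in \mathcal{P}\cap S(\gamma)$; (ii) $\beta := (w')^{\mathsf{T}}(y - y') \geq 0$, because $y'$ minimises $(w')^{\mathsf{T}}(\cdot)$ on $\mathcal{P}\cap S(\gamma)$ and $y = y^v \in \mathcal{P}\cap S(\gamma)$ by \Cref{prop:zv0} together with feasibility of \eqref{eq:P2(v)}; and (iii) $\beta \geq h\tau$, obtained by substituting $v = y - hw$ in the hypothesis $(w')^{\mathsf{T}} v \geq (w')^{\mathsf{T}} y'$.

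Next I would expand, using $\|w\| = \|w'\| = 1$,
\[
\|(y - \eta w) - (y' - \eta w')\|^2 = \|y - y'\|^2 + 2\eta\alpha + 2\eta\beta + \eta^2(2 - 2\tau).
\]
In the case $\tau \leq 0$, the final term alone gives $\|(y - \eta w) - (y' - \eta w')\|^2 \geq 2\eta^2$, hence $\|(y - \eta w) - (y' - \eta w')\| \geq \eta\sqrt{2}$, matching the first entry of the min. In the case $\tau > 0$, I would drop $\alpha \geq 0$ and apply Cauchy--Schwarz $\|y - y'\|^2 \geq \beta^2 \geq h^2\tau^2$ together with $\beta \geq h\tau$ to obtain
\[
\|(y - \eta w) - (y' - \eta w')\|^2 \geq h^2\tau^2 + 2\eta h\tau + 2\eta^2(1 - \tau) =: g(\tau).
\]
If $\eta \leq h$, then $\sqrt{\eta h} - h \leq 0$ and the claim is trivial. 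If $\eta > h$, I would minimise $g$ over $(0,1]$: the unconstrained critical point is $\tau^\ast = \eta(\eta - h)/h^2$, and, depending on whether $\tau^\ast \leq 1$ or $\tau^\ast > 1$, the minimum is $g(\tau^\ast) = \eta^2\bigl(2 - ((\eta - h)/h)^2\bigr)$ or $g(1) = h^2 + 2\eta h$.

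The main obstacle is the algebraic verification that in either sub-case the minimum dominates $(\sqrt{\eta h} - h)^2$. The boundary case is immediate from $g(1) - (\sqrt{\eta h} - h)^2 = \eta h + 2h\sqrt{\eta h} \geq 0$. The interior case reduces, after the substitution $r := \eta/h \in (1, (1+\sqrt{5})/2]$, to the inequality $r^2\bigl(2 - (r - 1)^2\bigr) \geq (\sqrt{r} - 1)^2$; this is rather loose but must be checked uniformly in $r$, either by elementary calculus or by factoring $(\sqrt{r} - 1)$ from both sides after clearing square roots.
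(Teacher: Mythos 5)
Your proof is correct, and in the main case it takes a genuinely different route from the paper. The setup is the same: both arguments use \Cref{lem:Bset}(a),(b) with the Euclidean structure to get $\|w\|=1$ and $v=y-hw$, and both dispose of the case $w^{\mathsf{T}}w'\leq 0$ by the identical expansion of $\|(y-\eta w)-(y'-\eta w')\|^2$ and the sign conditions $w^{\mathsf{T}}(y'-y)\geq 0$, $(w')^{\mathsf{T}}(y-y')\geq 0$. For $w^{\mathsf{T}}w'>0$, however, the paper inserts the point $v$ via the triangle inequality and invokes the auxiliary geometric \Cref{lem:lemma1} (Lotov's Lemma 8.15) applied to the pair $(v,w)$, $(y',w')$, which yields $\|(v-\eta w)-(y'-\eta w')\|\geq\sqrt{\eta h}$ directly, then subtracts $\|v-y\|\leq h$. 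You instead stay with the quadratic expansion, reduce everything to the three scalar inequalities $\alpha\geq 0$, $\beta\geq 0$, $\beta\geq h\tau$ (the last being exactly where the hypothesis $v\in\mathcal{H}(w',\mathcal{P}\cap S(\gamma))$ enters), bound $\|y-y'\|^2\geq\beta^2\geq h^2\tau^2$ by Cauchy--Schwarz, and minimize the convex quadratic $g(\tau)$ over $(0,1]$. The resulting case analysis ($\tau^\ast\leq 1$ versus $\tau^\ast>1$, i.e.\ $r=\eta/h$ below or above the golden ratio) closes correctly: $g(1)-(\sqrt{\eta h}-h)^2=\eta h+2h\sqrt{\eta h}\geq 0$, and the interior inequality $r^2\bigl(2-(r-1)^2\bigr)\geq(\sqrt{r}-1)^2$ holds with large margin since the left side exceeds $1$ and the right side is below $0.08$ on $\bigl(1,(1+\sqrt{5})/2\bigr]$. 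What each approach buys: the paper's version is shorter and leans on an imported lemma, which it also needs elsewhere; yours is self-contained (it makes \Cref{lem:lemma1} unnecessary for this step) at the cost of an elementary but nontrivial one-variable optimization at the end. One presentational remark: you should state explicitly that $h=\|z^v\|>0$ (from \Cref{lem:vlemma}(a)) before dividing by $h$ in the substitution $r=\eta/h$.
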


\begin{proof}
	Note that $\norm{w} = 1$ by \Cref{lem:Bset}(b). First, let us suppose that $w^\mathsf{T} w' \leq 0$. By \Cref{prop:supp_halfspace}, we have $w^\mathsf{T} ( y' - y) \geq 0$; by the definition of $y^\prime$, we have  $(w')^\mathsf{T} (y - y') \geq 0$. Hence, 
	\begin{align}\notag
		\lVert (y - \eta w) - (y' - \eta w')\rVert^2
		&= \norm{y - y'}^2 + \eta^2 \norm{w' - w}^2 + 2 \eta w^\mathsf{T} (y' - y) + 2 \eta (w')^\mathsf{T} (y - y')\\ \notag 
		&\geq \eta^2 \norm{w' - w}^2 \\ \notag
		& = \eta^2 \norm{w'}^2 + \eta^2 \norm{w}^2 - 2 \eta^2 w^\mathsf{T} w' \\ \notag
		& \geq \eta^2 \norm{w'}^2 + \eta^2 \norm{w}^2  = 2\eta^2
	\end{align}
	so that $\norm{(y - \eta w) - (y' - \eta w')} \geq \eta \sqrt{2}$.
	
	Next, suppose that $w^\mathsf{T} w' > 0$. By triangle inequality, we have
	\begin{equation}\label{eq:tri}
		\norm{(y - \eta w) - (y' - \eta w')}
		\geq \norm{(v - \eta w) - (y' - \eta w')} - \norm{(v - \eta w) - (y - \eta w)},
	\end{equation}
	and we control each term on the right separately. For the first term in \eqref{eq:tri}, note that $y' \in \mathcal{P}\cap S(\gamma)\subseteq \mathcal{H}(w, \mathcal{P} \cap S(\gamma))= \mathcal{H}(w, y)$. Moreover, by \Cref{lem:Bset}(a), we have $\mathcal{H}(w,y)\subseteq \mathcal{H}(w,v)$. Hence, $y^\prime \in \mathcal{H}(w,v)$. On the other hand, since $v \in S(\gamma)\setminus \mathcal{P}$, \Cref{lem:Bsetzv} ensures that $v \notin \mathcal{H}(w,y)$. By our assumption, we have
	\begin{align}\notag
		v \in \mathcal{H}(w', \mathcal{P} \cap S(\gamma)) = \mathcal{H}(w', y').
	\end{align}
	Then, by \Cref{lem:lemma1}, we get
	\[
	\norm{(v - \eta w) - (y' - \eta w')}^2
	\geq \eta d(y', \bd \mathcal{H}(w, v)).
	\]
	Since $\bd \mathcal{H}(w,v)=\{z\in\R^q\mid w^\mathsf{T}z= w^\mathsf{T}v\}$ and $w\in \mathbb{S}^{q-1}$, by elementary geometry, we have
	\[
	d(y', \bd \mathcal{H}(w, v)) = \abs{(w)^\mathsf{T}(y' - v)} \geq w^\mathsf{T}(y'-y) + w^\mathsf{T}y - w^\mathsf{T}v \geq w^\mathsf{T}y - w^\mathsf{T}v = h
	\]
	so that $\norm{(v - \eta w) - (y' - \eta w')}\geq \sqrt{\eta h}$. For the second term in \eqref{eq:tri}, by \Cref{lem:Bset} (a), we have
	\[
	\norm{(v - \eta w) - (y - \eta w)}
	= \norm{v-y}  = \norm{\tilde{w}^v} w^{\mathsf{T}}(y-v)= \norm{\tilde{w}^v} h \leq h 
	\]
	Hence, $\norm{(y - \eta w) - (y' - \eta w')}\geq \sqrt{\eta h} - h$ by \eqref{eq:tri}.
\end{proof}

Let us fix some $\eta > 0$. Similar to the construction \cite[Chapter 8, p.~253]{lotov2013interactive}, we define a new sequence $(\mathcal{Z}_k)_{k\geq 0}$ as follows:
\begin{enumerate}[(i)]
	\item We set $\mathcal{U}_0 \coloneqq \{(w^j,\Gamma(x^j)) \mid j \in\{1,\ldots,J\}\} \cup \{ (-\bar{w},\bar{y})\}$, where $\{(x^j,w^j)\mid j\in\{1,\ldots,J\}\}$ is as in \eqref{eq:P_0}, $\bar{w}$ is defined in \eqref{eq:gamma}, and $\bar{y}\in \mathcal{P}\cap S(\gamma)$ is such that $\bar{w}^\mathsf{T}\bar{y} = \gamma$. The existence of $\bar{y}$ is guaranteed since the supremum is attained (hence finite) by the compactness of $\mathcal{X}$ and the continuity of $x \mapsto \bar{w}^{\mathsf{T}}\Gamma(x)$. Let us define
	\[
	\mathcal{Z}_0 \coloneqq \{y-\eta w\mid (w,y)\in\mathcal{U}_0\}=\{\Gamma(x^1) - \eta w^1, \dots , \Gamma(x^J) - \eta w^J, \bar{y} + \eta \bar{w} \},
	\]
	\item For each $k\geq 0$, if $v^k \notin \mathcal{P}$, then we define 
	\begin{align}\label{eq:Zk}
		\mathcal{U}_{k+1}&\coloneqq \{(w^{v^k},y^{v^k})\}\cup\mathcal{U}_k,\\ \notag \mathcal{Z}_{k+1} &\coloneqq \{y-\eta w\mid (w,y)\in\mathcal{U}_{k+1}\}= \{y^{v^k} - \eta \tilde{w}^{v^k}
		\} \cup \mathcal{Z}_k.
	\end{align}
	\item If $v^{\bar{k}} \in \mathcal{P}$ for some $\bar{k}\geq 0$, then  by the vertex selection rule in \Cref{alg1} (line 20), the current outer approximation is the same as $\mathcal{P}\cap S(\gamma)$. In this case, we set $\mathcal{Z}_{k} = \mathcal{Z}_{\bar{k}}$ for all $k \geq \bar{k}$.
	(Note that this case is realized only if $\mathcal{P}\cap S(\gamma)$ is a polyhedral set.) 
\end{enumerate}

\begin{remark}\label{rem:Zkbd}
	For each $k\geq 0$, we have $\mathcal{Z}_k \subseteq \bd (\mathcal{P} \cap S(\gamma) +  \mathbb{B}_\eta(0))$. Let $k\geq0$ and $(w,y)\in \mathcal{U}_k$. Note that we have %
	\begin{align}\notag
		\mathcal{P} \cap S(\gamma) +  \mathbb{B}_\eta(0) \subseteq \mathcal{H}(w,y)+  \mathbb{B}_\eta(0).
	\end{align}
	Moreover, as $\norm{w}=1$, $\mathcal{H}(w,y)+  \mathbb{B}_\eta(0) \subseteq \mathcal{H}(w,y,2\eta)$ holds by \Cref{lem:ShiftedH}. Thus, we obtain $\mathcal{P} \cap S(\gamma) + \mathbb{B}_\eta(0)
	\subseteq \mathcal{H}(w,y,2\eta).$ Clearly, $y - \eta w \in \mathcal{P} \cap S(\gamma) + \mathbb{B}_\eta(0)$. Moreover, we have $y - \eta w \in \bd \mathcal{H}(w,y,2\eta)$ since $		w^\mathsf{T} (y - \eta w)= w^\mathsf{T} y - \eta$.
	These imply that $y - \eta w \in \bd(\mathcal{P}\cap S(\gamma)+\mathbb{B}_\eta(0))$. 
\end{remark}

The next lemma shows that the cardinality of the sequence $(\mathcal{Z}_k)_{k\geq 0}$ grows linearly.

\begin{lemma}\label{lem:cardZk}
	Suppose that Assumptions~\ref{assmp:C_poly}, \ref{assump:norm} hold. Suppose also that $\mathcal{P}\cap S(\gamma)$ is a non-polyhedral set. Then, $\abs{\mathcal{Z}_k}=J+1+k$ for each $k\geq 0$.
\end{lemma}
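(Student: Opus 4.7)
My plan is to reduce $|\mathcal{Z}_k|=J+1+k$ to two claims that I prove by induction on $k$: (a) $|\mathcal{U}_k|=J+1+k$ and the first (normal) components appearing in $\mathcal{U}_k$ are pairwise distinct; and (b) the map $(w,y)\mapsto y-\eta w$ is injective on $\mathcal{U}_k$. Note that in the $\ell_2$ setting every normal $w$ appearing in $\mathcal{U}_k$ is a unit vector: the $w^j$ are unit by the normalization in \Cref{sec:alg2}, $\|\bar{w}\|=1$ by \eqref{eq:gamma}, and $\|\tilde{w}^{v^k}\|=1$ by \Cref{lem:Bset}(b) (which applies since $v^k\notin\mathcal{P}$, as established below).

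For (a), the base case is immediate: the generators $w^1,\ldots,w^J$ of $C^+$ are distinct, while $-\bar{w}\notin C^+$ (since $\bar{w}\in\Int C^+$ and $C^+$ is pointed) differs from all of them. For the inductive step I check two things. First, non-polyhedrality of $\mathcal{P}\cap S(\gamma)$ forces $v^k\notin\mathcal{P}$ at every iteration; otherwise $\|z^{v^k}\|=0$ by \Cref{lem:vlemma}, maximality \eqref{eq:zmax} gives $\|z^v\|=0$ for every $v\in\ext\bar{\mathcal{P}}^{\out}_k$, and \Cref{lem:vlemma} yields $\ext\bar{\mathcal{P}}^{\out}_k\subseteq\mathcal{P}\cap S(\gamma)$; since the compact polytope $\bar{\mathcal{P}}^{\out}_k$ is the convex hull of its vertices and contains $\mathcal{P}\cap S(\gamma)$ by \Cref{rem:Scontains}, we would obtain $\bar{\mathcal{P}}^{\out}_k=\mathcal{P}\cap S(\gamma)$, contradicting non-polyhedrality. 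Second, $\tilde{w}^{v^k}$ differs from every first component $w$ recorded in $\mathcal{U}_k$: if not, then both $\mathcal{H}(\tilde{w}^{v^k},y^{v^k})$ (supporting $\mathcal{P}\cap S(\gamma)$ by \Cref{prop:supp_halfspace}) and $\mathcal{H}(w,y)$ (supporting $\mathcal{P}\cap S(\gamma)$ by construction of $\mathcal{U}_k$) share the same inward unit normal, so $\mathcal{H}(w,y)=\mathcal{H}(\tilde{w}^{v^k},y^{v^k})$; but then $v^k\in\bar{\mathcal{P}}^{\out}_k\subseteq\mathcal{H}(w,y)=\mathcal{H}(\tilde{w}^{v^k},y^{v^k})$, contradicting \Cref{lem:Bsetzv} (the center $v^k$ of the nonempty open ball $\Int\mathbb{B}_{\|z^{v^k}\|}(v^k)$ cannot lie in a set disjoint from that ball).

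For (b), I invoke the smoothness of $\mathcal{P}\cap S(\gamma)+\mathbb{B}_\eta(0)$. Since this body equals $\{p:d(p,\mathcal{P}\cap S(\gamma))\leq\eta\}$, each boundary point $p$ has a unique Euclidean projection $\pi(p)\in\mathcal{P}\cap S(\gamma)$ with $\|p-\pi(p)\|=\eta$, and $(p-\pi(p))/\eta$ is the unique outward unit normal at $p$ (a short Cauchy--Schwarz argument combined with the variational inequality characterising Euclidean projections onto convex sets). If $y-\eta w=y'-\eta w'$ for two pairs in $\mathcal{U}_k$, then \Cref{rem:Zkbd} places the common point on $\bd(\mathcal{P}\cap S(\gamma)+\mathbb{B}_\eta(0))$, while $\bd\mathcal{H}(w,y,2\eta)$ and $\bd\mathcal{H}(w',y',2\eta)$ are supporting hyperplanes there with outward unit normals $-w$ and $-w'$; smoothness forces $w=w'$ and hence $y=y'$. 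Combining (a) and (b) gives $|\mathcal{Z}_k|=|\mathcal{U}_k|=J+1+k$.

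The main obstacle is the second bullet of (a): ruling out that $\tilde{w}^{v^k}$ coincides with a normal previously stored in $\mathcal{U}_k$. The key is to exploit $\|w\|=\|\tilde{w}^{v^k}\|=1$ to collapse equality of normals into equality of the two supporting halfspaces of $\mathcal{P}\cap S(\gamma)$, and then to derive the required strict separation of $v^k$ from $\mathcal{H}(\tilde{w}^{v^k},y^{v^k})$ via \Cref{lem:Bsetzv}.
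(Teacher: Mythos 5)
Your proof is correct, and it reaches the conclusion by a genuinely different decomposition than the paper. The paper runs a single induction-plus-contradiction directly on $\mathcal{Z}_k$: if the new point $y^{v^{\bar k}}-\eta\tilde w^{v^{\bar k}}$ coincided with an earlier $y-\eta w$, a nearest-point/strict-convexity computation (showing $d(a,\mathcal{H}_{\bar k})=\eta$ is attained uniquely at $y^{v^{\bar k}}$, while $\norm{a-y}=\eta$ with $y\in\mathcal{H}_{\bar k}$) recovers $y^{v^{\bar k}}=y$ and $\tilde w^{v^{\bar k}}=w$, so that $\mathcal{H}_{\bar k}$ is one of the halfspaces already defining $\bar{\mathcal{P}}^{\out}_{\bar k}$ and the approximation fails to shrink, contradicting \Cref{cor:H_seq}. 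You instead split the claim into (a) pairwise distinctness of the normals stored in $\mathcal{U}_k$, obtained from the fact that a supporting halfspace of $\mathcal{P}\cap S(\gamma)$ is determined by its unit inward normal together with \Cref{lem:Bsetzv} (which strictly separates $v^k$ from its own cut), and (b) injectivity of $(w,y)\mapsto y-\eta w$ via smoothness of $\mathcal{P}\cap S(\gamma)+\mathbb{B}_\eta(0)$. Both arguments ultimately rest on the same Euclidean fact --- uniqueness of the nearest point, equivalently of the unit normal of the $\eta$-fattened body --- but your contradiction is drawn from \Cref{lem:Bsetzv} rather than from the Hausdorff-distance decrease of \Cref{cor:H_seq}, and your route has the small bonus of actually proving $\abs{\mathcal{Z}_0}=J+1$ (distinctness of the initial $J+1$ points), which the paper only asserts ``by construction.'' All the auxiliary facts you invoke ($v^k\notin\mathcal{P}$ under non-polyhedrality, $\norm{\tilde w^{v^k}}=1$ from \Cref{lem:Bset}(b), the normal characterization from \Cref{rem:Zkbd}) are used legitimately.
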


\begin{proof}
	Note that since $\mathcal{P}\cap S(\gamma)$ is a non-polyhedral set, $\delta^H(\bar{\mathcal{P}}^\out_k, \mathcal{P}\cap S(\gamma)) > 0$ holds for all $k\geq 0$. This implies that for all $k\geq 0$, (1) $v^k \notin \mathcal{P}$ by the vertex selection rule of \Cref{alg1} and (2) $\bar{\mathcal{P}}^\out_{k+1} \neq \bar{\mathcal{P}}^\out_k$ since, by \Cref{cor:H_seq}, we have
	\begin{align}\notag
		\delta^H(\bar{\mathcal{P}}^\out_{k+1}, \bar{\mathcal{P}}^\out_k) \geq  \delta^H(\bar{\mathcal{P}}^\out_k, \mathcal{P}\cap S(\gamma))>0.
	\end{align}
	Now, by construction, $\abs{\mathcal{Z}_0}= J+1$ and $\abs{\mathcal{Z}_k}\leq J+1+k$ for each $k\geq 0$. To get a contradiction, let $\bar{k}\geq 0$ be the smallest index at which we have $\abs{\mathcal{Z}_{\bar{k}+1}}<J+1+\bar{k}+1$. 
	Then, there exists $(w,y) \in \mathcal{U}_{\bar{k}}$ such that 
	\begin{align}\label{eq:ykwk}
		a \coloneqq y^{v^{\bar{k}}}-\eta \tilde{w}^{v^{\bar{k}}} = y -\eta w.
	\end{align}
	We claim that $y^{v^{\bar{k}}}=y$. To prove this, let us note that, for an arbitrary $b\in\bd \mathcal{H}_{\bar{k}}$, we can calculate the distance from $a$ to the hyperplane $\bd\mathcal{H}_{\bar{k}}$ as
	\[
	d(a,\mathcal{H}_{\bar{k}}) = \frac{\abs{({\tilde{w}}^{v^{\bar{k}}})^{\mathsf{T}} (a - b)}}{\|{\tilde{w}}^{v^{\bar{k}}}\|}=\abs{({\tilde{w}}^{v^{\bar{k}}})^{\mathsf{T}} (a - b)},
	\]
	where we use \Cref{lem:Bset}(b). In particular, taking $b= y^{v^{\bar{k}}}\in \bd \mathcal{H}_{\bar{k}}$ gives
	\[
	d(a,\mathcal{H}_{\bar{k}}) = \abs{(\tilde{w}^{v^{\bar{k}}})^\mathsf{T} (a - y^{v^{\bar{k}}})} = \eta \abs{(\tilde{w}^{v^{\bar{k}}})^\mathsf{T} \tilde{w}^{v^{\bar{k}}}} = \eta \|\tilde{w}^{v^{\bar{k}}}\|^2 = \eta.
	\]
	The above distance is attained at $y^{v^{\bar{k}}}$ since $\|a-y^{v^{\bar{k}}}\|=\eta\|\tilde{w}^{v^{\bar{k}}}\|^2=\eta$. Moreover, due to the strict convexity of the $\ell_2$-norm, we have $\|a-y^\prime\|>\|a-y^{v^{\bar{k}}}\|=\eta$ for every $y^\prime\in\mathcal{H}_{\bar{k}}$ with $y^\prime\neq y^{v^{\bar{k}}}$. In particular, having $y^{v^{\bar{k}}}\neq y$ would yield $y\in\mathcal{P}\cap S(\gamma)\subseteq \mathcal{H}_{\bar{k}}$ so that $\norm{a-y}>\eta$. However, we also have $\norm{a-y}=\eta \norm{w}=\eta$. Hence, we must have $y^{v^{\bar{k}}}=y$. Moreover, by \eqref{eq:ykwk}, we also have $\tilde{w}^{v^{\bar{k}}}=w$ and $\mathcal{H}_{\bar{k}}=\mathcal{H}(w,y)$. Note that by the structure of \Cref{alg1}, we have 
	\[
	\bar{\mathcal{P}}^\out_{\bar{k}} = \bigcap_{(w,y) \in \mathcal{U}_{\bar{k}}} \mathcal{H}(w,y). 
	\]
	Then, $\bar{\mathcal{P}}^\out_{{\bar{k}}+1}  = \bar{\mathcal{P}}^\out_{{\bar{k}}} \cap \mathcal{H}_{\bar{k}}= \bar{\mathcal{P}}^\out_{{\bar{k}}} $, which is a contradiction. 
	Hence, $\abs{\mathcal{Z}_k}=J+1+k$ for each $k\geq 0$.
\end{proof}

Before proceeding further, we recall the following definition, which is critical in proving the convergence rate of the algorithm in \Cref{thm:Lotov}.

\begin{definition}\label{defn:packing}\cite[Chapter 8]{lotov2013interactive}
	A set $Z\subseteq \R^q$ is called \emph{the base of an $\epsilon$-packing} if $\norm{y-z}\geq 2\epsilon$ for every $y,z\in Z$.
\end{definition}

The following lemma is originally proved for an $H_1$-sequence of outer approximating polytopes, see \cite[Definition 8.4 and Lemma 8.19]{lotov2013interactive}. Here, we show that the lemma still holds for the sequence of outer approximating polytopes generated by \Cref{alg1} which, in general, may not be an $H_1$-sequence.

\begin{lemma} \label{lem:lemma3}
	Suppose that Assumptions~\ref{assmp:C_poly}, \ref{assump:norm} hold. Suppose that $0<\epsilon<1$. Then, there exists $N\geq 0$ such that, for each $k\geq N$, the set $\mathcal{Z}_k$ is the base of an $\varepsilon_k^N(\epsilon)$-packing, where
	\begin{align}\label{eq:epsilonkN}
		\varepsilon_k^N( \epsilon) \coloneqq \frac12 \min \cb{\eta \sqrt{2}, 2 \epsilon_N, (1 - \epsilon) \sqrt{\eta \delta^H(\bar{\mathcal{P}}^\out_{k-1}, \mathcal{P} \cap S(\gamma))}}
	\end{align}
	and $\epsilon_N \coloneqq \frac12\min \cb{\norm{y-z} \mid y,z \in \mathcal{Z}_N, y \neq z}$.
\end{lemma}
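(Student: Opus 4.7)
The plan is to split the distinct pairs of elements in $\mathcal{Z}_k$ into two kinds and bound their Euclidean distances separately, invoking \Cref{lem:lemma2} for pairs that involve a freshly added point. If $\mathcal{P}\cap S(\gamma)$ is polyhedral, then $\mathcal{Z}_k$ stabilizes after finitely many steps and the assertion is trivial, so I may assume $\mathcal{P}\cap S(\gamma)$ is non-polyhedral; by the argument in the proof of \Cref{lem:cardZk}, this gives $v^j\notin\mathcal{P}$ for every $j\geq 0$. Writing $h_j:=\delta^H(\bar{\mathcal{P}}^\out_j,\mathcal{P}\cap S(\gamma))$, the sequence $(h_j)_{j\geq 0}$ is non-increasing since $\bar{\mathcal{P}}^\out_{j+1}\subseteq\bar{\mathcal{P}}^\out_j$, and $h_j\to 0$ as $j\to\infty$ by \Cref{cor:limit}. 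I would therefore choose $N\geq 1$ so that $h_j\leq \tfrac{1}{4}\eta\epsilon^2$ for every $j\geq N$, which guarantees $1-\sqrt{h_j/\eta}\geq 1-\epsilon/2$.

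Fix $k\geq N$ and two distinct points $y_1-\eta w_1,\,y_2-\eta w_2\in \mathcal{Z}_k$ with $(w_1,y_1),(w_2,y_2)\in\mathcal{U}_k$. If both pairs lie in $\mathcal{U}_N$, then $\norm{(y_1-\eta w_1)-(y_2-\eta w_2)}\geq 2\epsilon_N$ by the very definition of $\epsilon_N$. Otherwise, at least one pair, say $(w_1,y_1)$, equals $(\tilde{w}^{v^j},y^{v^j})$ for some $N\leq j\leq k-1$; if both points are newly added, I take $j$ to be the larger of the two indices so that $(w_2,y_2)\in\mathcal{U}_j$. I then apply \Cref{lem:lemma2} with $v=v^j$, $y=y^{v^j}$, $w=\tilde{w}^{v^j}$, $y'=y_2$, and $w'=w_2$. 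Its hypotheses are verified case by case: $v^j\in S(\gamma)\setminus\mathcal{P}$ by the reduction above; $\norm{w_2}=1$ by the chosen normalizations and \Cref{lem:Bset}(b); $y_2\in\mathcal{P}\cap S(\gamma)$ with $w_2^{\mathsf{T}}y_2=\inf_{z\in\mathcal{P}\cap S(\gamma)}w_2^{\mathsf{T}}z$ (optimality of $\Gamma(x^i)$ in \eqref{eq:P1(w)} for the initial generators, the choice of $\bar{y}$ for the pair $(-\bar{w},\bar{y})$, and \Cref{prop:supp_halfspace} for iterated additions); and $v^j\in\mathcal{H}(w_2,\mathcal{P}\cap S(\gamma))$ because $v^j\in\bar{\mathcal{P}}^\out_j\subseteq\bigcap_{(w,y)\in\mathcal{U}_j}\mathcal{H}(w,y)$.

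\Cref{lem:lemma2} then yields the lower bound $\min\{\eta\sqrt{2},\sqrt{\eta h_j}-h_j\}$, where
\[
h_j=(\tilde{w}^{v^j})^{\mathsf{T}}(y^{v^j}-v^j)=\norm{z^{v^j}}=\delta^H(\bar{\mathcal{P}}^\out_j,\mathcal{P}\cap S(\gamma))
\]
by \Cref{lem:Bset}(a) and the maximality \eqref{eq:zmax} of $v^j$. Combining $1-\sqrt{h_j/\eta}\geq 1-\epsilon/2$ with $\sqrt{\eta h_j}\geq\sqrt{\eta h_{k-1}}$ (which holds by monotonicity of $(h_j)$), I obtain
\[
\sqrt{\eta h_j}-h_j=\sqrt{\eta h_j}\bigl(1-\sqrt{h_j/\eta}\bigr)\geq(1-\epsilon/2)\sqrt{\eta h_{k-1}}\geq(1-\epsilon)\sqrt{\eta h_{k-1}}.
\]
Putting the two cases together, the minimum distance between distinct elements of $\mathcal{Z}_k$ is at least $\min\{2\epsilon_N,\eta\sqrt{2},(1-\epsilon)\sqrt{\eta h_{k-1}}\}=2\varepsilon_k^N(\epsilon)$, which is exactly the required packing property.

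The main subtlety lies in the subcase $(w_2,y_2)=(\tilde{w}^{v^i},y^{v^i})$ with $i<j$: the statement of \Cref{lem:lemma2} requires $w'\in C^+$, whereas the optimal dual variables $\tilde{w}^{v^i}$ are generally outside $C^+$. I would address this either by noting that the proof of \Cref{lem:lemma2} uses only $\norm{w'}=1$ together with the fact that $\mathcal{H}(w',y')$ supports $\mathcal{P}\cap S(\gamma)$ at $y'$---both of which are guaranteed by \Cref{lem:Bset}(b) and \Cref{prop:supp_halfspace} regardless of $C^+$-membership---or by first recording a slightly weakened version of \Cref{lem:lemma2} as a preparatory step.
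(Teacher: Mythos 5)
Your proof is correct and follows essentially the same route as the paper's: choose $N$ so that $\delta^H(\bar{\mathcal{P}}^\out_{k-1},\mathcal{P}\cap S(\gamma))\leq\eta\epsilon^2$ for $k> N$ (your constant $\tfrac14\eta\epsilon^2$ works equally well), bound distances within an old pair by $2\epsilon_N$, and bound the distance from a newly added point $y^{v^j}-\eta\tilde{w}^{v^j}$ to any earlier point via \Cref{lem:lemma2} together with the identity $h_j=\|z^{v^j}\|=\delta^H(\bar{\mathcal{P}}^\out_j,\mathcal{P}\cap S(\gamma))$ and monotonicity of the Hausdorff distances; the paper merely packages the same pair analysis as an induction on $k$. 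Your closing remark about the hypothesis $w'\in C^+$ in \Cref{lem:lemma2} is also well taken: the paper applies that lemma to pairs $(w',y')\in\mathcal{U}_{k-1}$ that include $(-\bar{w},\bar{y})$ and the vectors $\tilde{w}^{v^i}$, which need not lie in $C^+$, and your proposed fix (only $\norm{w'}=1$ and the supporting property of $\mathcal{H}(w',y')$ at $y'$ are actually used in its proof) is exactly what is needed.
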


\begin{proof}
	Let us fix $N\geq 0$ to be determined later. We argue by induction on $k\geq N$. By definitions of $\epsilon_N$ and $\varepsilon_N^N( \epsilon)$, we have $\norm{y-z} \geq 2\epsilon_N \geq 2\varepsilon_N^N( \epsilon)$ for every $y,z \in \mathcal{Z}_N$. Hence, the assertion is trivial for $k=N$.
	
	Assume that $\mathcal{Z}_{k-1}$ is the base of an $\varepsilon_{k-1}^N( \epsilon)$-packing for some $k\geq N+1$, that is,
	\begin{align}\notag
		\norm{y-z} \geq 2 \varepsilon_{k-1}^N( \epsilon)
	\end{align}
	for every $y,z\in\mathcal{Z}_{k-1}$. Moreover, since $\delta^H(\bar{\mathcal{P}}^\out_{k-2}, \mathcal{P} \cap S(\gamma)) \geq \delta^H(\bar{\mathcal{P}}^\out_{k-1}, \mathcal{P} \cap S(\gamma))$, by \eqref{eq:epsilonkN}, we have
	\begin{align}\notag
		\varepsilon_{k-1}^N( \epsilon) \geq \varepsilon_k^N( \epsilon).
	\end{align}
	Hence, $\mathcal{Z}_{k-1}$ is also the base of an $\varepsilon_k^N( \epsilon)$-packing.	
	
	For convenience, let us define $b^{k-1} \coloneqq y^{v^{k-1}} - \eta \tilde{w}^{v^{k-1}} = v^{k-1} + z^{v^{k-1}} - \eta \tilde{w}^{v^{k-1}}$. Since $\mathcal{Z}_k=\{b^{k-1}\}\cup\mathcal{Z}_{k-1}$, to show that $\mathcal{Z}_k$ is the base of an $\varepsilon_k^N( \epsilon)$-packing, it is enough to verify that
	\begin{align}\notag
		\norm{b^{k-1}- b} \geq 2\varepsilon_k^N( \epsilon)
	\end{align}
	for every $b \in \mathcal{Z}_{k-1}$. Let us fix $b\in\mathcal{Z}_{k-1}$. We may write $b = y' - \eta w'$, where $(w^\prime,y^\prime)\in \mathcal{U}_{k-1}$.  
	
	We know that $v^{k-1} \in S(\gamma)\setminus \mathcal{P}$ and $v^{k-1} \in \mathcal{H}(w^\prime,y^\prime) = \mathcal{H}(w^\prime,\mathcal{P} \cap S(\gamma))$. Also, $y' \in \bd \mathcal{H}(w', y')$ and $y^\prime\in \mathcal{P} \cap S(\gamma)$ by definition. Then, by \Cref{lem:lemma2}, we have
	\begin{align}\label{eq:normb}
		\|b^{k-1} - b\| = \|(y^{v^{k-1}} - \eta \tilde{w}^{v^{k-1}}) - (y' - \eta w')\| \geq \min \cb{\eta \sqrt{2}, \ \sqrt{\eta h^{k-1}} - h^{k-1}},
	\end{align}
	where $h^{k-1} \coloneqq (\tilde{w}^{v^{k-1}})^\mathsf{T} (y^{v^{k-1}} - v^{k-1})$. Note that
	\begin{align}\notag
		h^{k-1} &= (\tilde{w}^{v^{k-1}})^\mathsf{T} (y^{v^{k-1}} - v^{k-1}) =(\tilde{w}^{v^{k-1}})^\mathsf{T} z^{v^{k-1}}
		=\|z^{v^{k-1}}\|
		=\delta^H(\bar{\mathcal{P}}^\out_{k-1}, \mathcal{P} \cap S(\gamma)),
	\end{align}
	
	We choose $N$ so that $\delta^H(\bar{\mathcal{P}}^\out_N, \mathcal{P} \cap S(\gamma)) \leq \eta \epsilon^2.$ In particular, for each $k\geq N+1$, we have $\delta^H(\bar{\mathcal{P}}^\out_{k-1}, \mathcal{P} \cap S(\gamma)) \leq \eta \epsilon^2$, which implies
	\begin{align}\notag
		\sqrt{\frac{\eta \epsilon^2}{\delta^H(\bar{\mathcal{P}}^\out_{k-1}, \mathcal{P} \cap S(\gamma))}} \geq 1.  
	\end{align}
	Then, 
	\begin{align}\notag
		\sqrt{\eta h^{k-1}} - &  h^{k-1}
		= \sqrt{\eta \delta^H(\bar{\mathcal{P}}^\out_{k-1}, \mathcal{P} \cap S(\gamma))}  -  \delta^H(\bar{\mathcal{P}}^\out_{k-1}, \mathcal{P} \cap S(\gamma)) \\ \notag
		& \geq \sqrt{\eta \delta^H(\bar{\mathcal{P}}^\out_{k-1}, \mathcal{P} \cap S(\gamma))}  - \sqrt{\frac{\eta \epsilon^2}{\delta^H(\bar{\mathcal{P}}^\out_{k-1}, \mathcal{P} \cap S(\gamma))}} \ \delta^H(\bar{\mathcal{P}}^\out_{k-1}, \mathcal{P} \cap S(\gamma)) \\ \notag
		& = (1 - \epsilon) \sqrt{\eta \delta^H(\bar{\mathcal{P}}^\out_{k-1}, \mathcal{P} \cap S(\gamma))}.
	\end{align}
	Hence, from \eqref{eq:normb}, we have
	\begin{align}\notag
		\norm{b^{k-1} - b} &\geq \min \cb{\eta \sqrt{2}, \ \sqrt{\eta h^{k-1}} -  h^{k-1}} \\ \notag
		& \geq \min \cb{\eta \sqrt{2}, \ (1 - \epsilon) \sqrt{\eta \delta^H(\bar{\mathcal{P}}^\out_{k-1}, \mathcal{P} \cap S(\gamma))}} \\ \notag
		& \geq \min \cb{\eta \sqrt{2}, 2 \epsilon_N, (1 - \epsilon) \sqrt{\eta \delta^H(\bar{\mathcal{P}}^\out_{k-1}, \mathcal{P} \cap S(\gamma))}}  = 2\varepsilon_k^N( \epsilon).\notag 
	\end{align}
\end{proof}

\begin{lemma} \cite[Lemma 8.20]{lotov2013interactive} \label{lem:lemma4}
	Let $A\subseteq\R^q$ be a nonempty convex compact set. Suppose that $0<\epsilon<R(A)$ and let $Z\subseteq \bd A$ be the base of an $\epsilon$-packing. Then,
	\[
	\abs{Z}\leq N_1 (\epsilon, R(A)) \coloneqq \frac{q \pi_q}{\pi_{q-1}} \of{1 + \frac{R(A)^2}{\epsilon^2}}^{\frac{q-1}{2}}.
	\]
\end{lemma}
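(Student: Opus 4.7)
The plan is to combine \Cref{lem:cardZk}, \Cref{lem:lemma3}, and \Cref{lem:lemma4} into a packing argument on the auxiliary sequence $(\mathcal{Z}_k)_{k\geq 0}$ constructed before \Cref{lem:cardZk}. First, I would dispose of the trivial case in which $\mathcal{P}\cap S(\gamma)$ is polyhedral: the vertex-selection rule together with the reasoning in the proof of \Cref{lem:cardZk} forces $v^{\bar k}\in\mathcal{P}$ for some $\bar k\geq 0$, and from that iteration onward $\bar{\mathcal{P}}^{\out}_k=\mathcal{P}\cap S(\gamma)$, so the left-hand side vanishes for all large $k$ and the bound is trivial.

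For the main non-polyhedral case, fix an auxiliary tolerance $\delta\in(0,1)$, to be chosen later in terms of the target $\epsilon$, and a parameter $\eta>0$ to be optimized at the end. By \Cref{lem:lemma3} and \Cref{cor:limit}, I can choose $N\geq 0$ large enough so that, for every $k\geq N+1$, the set $\mathcal{Z}_k$ is the base of an $\varepsilon_k^N(\delta)$-packing and the minimum in \eqref{eq:epsilonkN} is attained by its third term, i.e., $\varepsilon_k^N(\delta) = \tfrac{1-\delta}{2}\sqrt{\eta\,\delta^H(\bar{\mathcal{P}}^{\out}_{k-1},\mathcal{P}\cap S(\gamma))}$. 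By \Cref{rem:Zkbd}, $\mathcal{Z}_k\subseteq\bd(\mathcal{P}\cap S(\gamma)+\mathbb{B}_\eta(0))$, and the elementary inequality $R(\mathcal{P}\cap S(\gamma)+\mathbb{B}_\eta(0))\leq R(\mathcal{P}\cap S(\gamma))+\eta$ holds. Applying \Cref{lem:lemma4} to $A=\mathcal{P}\cap S(\gamma)+\mathbb{B}_\eta(0)$ and combining with the exact count $|\mathcal{Z}_k|=J+1+k$ from \Cref{lem:cardZk} gives
\[
(J+1+k)^{\frac{2}{q-1}}\leq \left(\frac{q\pi_q}{\pi_{q-1}}\right)^{\frac{2}{q-1}}\left(1 + \frac{(R(\mathcal{P}\cap S(\gamma))+\eta)^2}{(\varepsilon_k^N(\delta))^2}\right).
\]
For $k$ large enough, the $1$ is negligible compared with the ratio (up to a factor $1+\delta$); substituting the formula for $\varepsilon_k^N(\delta)$ and rearranging yields
\[
\delta^H(\bar{\mathcal{P}}^{\out}_{k-1},\mathcal{P}\cap S(\gamma))\leq \frac{4(1+\delta)(R(\mathcal{P}\cap S(\gamma))+\eta)^2}{(1-\delta)^2\,\eta}\left(\frac{q\pi_q}{\pi_{q-1}}\right)^{\frac{2}{q-1}} k^{\frac{2}{1-q}}.
\]

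The final step is an elementary optimization: setting $R\coloneqq R(\mathcal{P}\cap S(\gamma))$, the function $\eta\mapsto (R+\eta)^2/\eta$ is minimized at $\eta=R$, with minimum $4R$. Choosing $\eta=R(\mathcal{P}\cap S(\gamma))$ therefore reduces the prefactor to $\frac{1+\delta}{(1-\delta)^2}\,\bar{\lambda}(\mathcal{P}\cap S(\gamma))$. Since $(1+\delta)/(1-\delta)^2\to 1$ as $\delta\to 0$, I pick $\delta$ small enough that $(1+\delta)/(1-\delta)^2\leq 1+\epsilon$, which finishes the argument. The main obstacle is the careful bookkeeping of several small parameters: one must simultaneously (i) choose $N$ large enough so that the third term of $\varepsilon_k^N(\delta)$ in \eqref{eq:epsilonkN} is the minimizer (this uses the convergence $\delta^H(\bar{\mathcal{P}}^{\out}_{k-1},\mathcal{P}\cap S(\gamma))\to 0$ from \Cref{cor:limit}), (ii) ensure the ratio $(R+\eta)^2/(\varepsilon_k^N(\delta))^2$ dominates the $1$ in the \Cref{lem:lemma4} bound up to a factor $1+\delta$, and (iii) tie the auxiliary tolerance $\delta$ to the target $\epsilon$ only after fixing $\eta=R$, so that no circular dependence between $\delta$, $\eta$, and $N$ arises.
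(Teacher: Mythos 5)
There is a genuine gap here, and it is a fundamental one: you have not proved the statement in question. \Cref{lem:lemma4} is a purely geometric packing bound about an arbitrary nonempty convex compact set $A\subseteq\R^q$ --- it asserts that any subset $Z$ of $\bd A$ whose points are pairwise at distance at least $2\epsilon$ satisfies $\abs{Z}\leq \frac{q\pi_q}{\pi_{q-1}}\of{1+R(A)^2/\epsilon^2}^{(q-1)/2}$. Your argument never addresses this claim. What you wrote is instead a reconstruction of the proof of \Cref{thm:Lotov}, the convergence-rate theorem: you build the auxiliary sequence $(\mathcal{Z}_k)_{k\geq 0}$, invoke \Cref{lem:lemma3} and \Cref{lem:cardZk}, choose $\eta=R(\mathcal{P}\cap S(\gamma))$, and tune an auxiliary tolerance so the prefactor collapses to $(1+\epsilon)\bar{\lambda}(\mathcal{P}\cap S(\gamma))$. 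Worse, your argument explicitly \emph{applies} \Cref{lem:lemma4} to $A=\mathcal{P}\cap S(\gamma)+\mathbb{B}_\eta(0)$ as a key ingredient, so as a proof of \Cref{lem:lemma4} it is circular: the statement to be established is assumed as a black box. The conclusion you reach, a decay estimate for $\delta^H(\bar{\mathcal{P}}^{\out}_{k-1},\mathcal{P}\cap S(\gamma))$, is not even of the same type as the conclusion of the lemma, which is a cardinality bound independent of the algorithm, the upper image, and the sequence $(\bar{\mathcal{P}}^{\out}_k)_{k\geq 0}$.

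For the record, the paper itself does not prove \Cref{lem:lemma4} either: it is quoted directly from \cite[Lemma 8.20]{lotov2013interactive}, and the hypervolume constants in \Cref{notation:constants} are set up precisely so the citation can be used verbatim. A genuine blind proof would have to supply the geometric argument from that reference: since $Z\subseteq\bd A$ is the base of an $\epsilon$-packing, the balls $\mathbb{B}_\epsilon(z)$, $z\in Z$, are pairwise disjoint, and convexity of $A$ lets one lower-bound the $(q-1)$-dimensional measure of each boundary piece $\mathbb{B}_\epsilon(z)\cap\bd A$ (the distortion factor $\of{1+R(A)^2/\epsilon^2}^{(q-1)/2}$ arises from controlling how steeply $\bd A$ can tilt inside the circumscribed ball of radius $R(A)$), while the total surface measure of $\bd A$ is at most that of the circumscribed sphere, namely $q\pi_q R(A)^{q-1}$; dividing the two yields the stated bound. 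It is worth noting that, read as a proof of \Cref{thm:Lotov}, your parameter bookkeeping is essentially sound and mirrors the paper's own proof of that theorem (your choice minimizing $(R+\eta)^2/\eta$ at $\eta=R$ reproduces the factor $16R$ in $\bar{\lambda}$) --- but that is a different statement from the one you were asked to prove.
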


Using the above lemmas, we are ready to prove \Cref{thm:Lotov}.

\begin{proof}[\textbf{Proof of \Cref{thm:Lotov}}]
	If $\mathcal{P} \cap S(\gamma)$ is a polytope, then there exists $K'\in\N$ such that $\delta^H(\bar{\mathcal{P}}^\out_k, \mathcal{P} \cap S(\gamma))=0$ for every $k \geq K'$, and the assertion of the theorem holds trivially. For the rest of the proof, we assume that $\mathcal{P} \cap S(\gamma)$ is a non-polyhedral set.
	
	Let $\epsilon^\prime\coloneqq \frac12(1-\frac{1}{\sqrt{1+\epsilon}})\in (0,1)$ and $\eta\coloneqq R(\mathcal{P} \cap S(\gamma))>0$. By \Cref{lem:lemma3}, there exists $N^\prime\in \N$ such that, for every $k \geq N'$, the set $\mathcal{Z}_k$ is the base of an $\varepsilon_k^{N'}( \epsilon')$-packing, where
	\begin{align}\notag
		& \varepsilon_k^{N'}( \epsilon') = \min \cb{  \frac{\eta\sqrt{2}}{2},  \epsilon_{N'}, \tau_k},\\ \notag &\epsilon_{N^\prime} = \frac12 \min \cb{\norm{y-z} \mid y,z \in \mathcal{Z}_{N^\prime}, y \neq z},\\ \notag 
		&\tau_k \coloneqq \frac{(1 - \epsilon') \sqrt{\eta \delta^H(\bar{\mathcal{P}}^\out_{k-1}, \mathcal{P} \cap S(\gamma))}}{2}.\notag 
	\end{align}
	Moreover, by \Cref{cor:H_seq}, $(\bar{\mathcal{P}}^\out_k)_{k\geq 0}$ is an $H(1,\mathcal{P} \cap S(\gamma))$-sequence of cutting. Hence, by \Cref{thm:Lotovlimit}, there exists $N'' \geq N'$ such that
	\begin{equation}\label{eq:deltabeta}
		\delta^H(\bar{\mathcal{P}}^\out_{N''-1}, \mathcal{P} \cap S(\gamma)) \leq \frac{4}{\eta} \of{\frac{\varepsilon_{N'}^{N'}( \epsilon')}{1 - \epsilon'}}^2,
	\end{equation}
	where $\varepsilon_{N'}^{N'}( \epsilon')>0$ holds as $\mathcal{P}\cap S(\gamma)$ is not a polyhedral set. Hence, we get
	\begin{align}\notag
		\tau_{N^{\prime\prime}}=\frac{(1 - \epsilon') \sqrt{\eta \delta^H(\bar{\mathcal{P}}^\out_{N''-1}, \mathcal{P} \cap S(\gamma))}}{2} \leq \varepsilon_{N'}^{N'}( \epsilon') \leq  \min \cb{\frac{\eta \sqrt{2}}{2},  \epsilon_{N'}}.
	\end{align}
	Observe that for every $k \geq N''$, we have $\tau_{k}\leq \tau_{N^{\prime\prime}}$; hence
	\begin{align}\notag
		\varepsilon_k^{N'}( \epsilon') = \min \cb{  \frac{\eta\sqrt{2}}{2},  \epsilon_{N'}, \tau_k}= \tau_{k};
	\end{align}
	in particular, $\mathcal{Z}_k$ is the base of a $\tau_k$-packing. Similar to \eqref{eq:deltabeta}, using  \Cref{thm:Lotovlimit}, we may find $ N^{\prime\prime\prime}\in\N$ such that, for every $k \geq N^{\prime\prime\prime}$,
	\[
	\delta^H(\bar{\mathcal{P}}^\out_{k-1}, \mathcal{P} \cap S(\gamma)) \leq \frac{4 (R(\mathcal{P} \cap S(\gamma)+ \mathbb{B}_{\eta}(0)))^2}{(1-\epsilon^\prime)^2\eta},
	\]
	which is equivalent to 
	\[
	\tau_k \leq R(\mathcal{P} \cap S(\gamma) + \mathbb{B}_{\eta}(0)).
	\]
	By \Cref{rem:Zkbd}, $\mathcal{Z}_k \subseteq \bd (\mathcal{P} \cap S(\gamma) +  \mathbb{B}_{\eta}(0))$. Then, from \Cref{lem:lemma4}, for every $k \geq \max \{N'', N^{\prime\prime\prime}\}$, we get
	\begin{align}\notag
		\abs{\mathcal{Z}_k} \leq N_1 (\tau_k, R(\mathcal{P} \cap S(\gamma)+  \mathbb{B}_{\eta}(0))).
	\end{align}
	Moreover, by definition of $R(\cdot)$ and by the choice of $\eta$, we have 
	\begin{align}\notag
		N_1 (\tau_k, R(\mathcal{P} \cap S(\gamma) + \mathbb{B}_{\eta}(0)))  &= N_1 (\tau_k, R(\mathcal{P} \cap S(\gamma)) + \eta) \\ \notag
		&= \frac{q \pi_q}{\pi_{q-1}} \cb{1 + \of{\frac{2R(\mathcal{P} \cap S(\gamma))}{\tau_k}}^2}^{\frac{q-1}{2}}. \notag
	\end{align}
	Since $\mathcal{P} \cap S(\gamma)$ is a non-polyhedral set, by \Cref{lem:cardZk}, for every $k\geq 0$, we have
	\begin{align}\notag
		\abs{\mathcal{Z}_k} = J + 1 + k.
	\end{align}
	Hence, for every $k\geq \max\{N^{\prime\prime}, N^{\prime\prime\prime}\}$, we have
	\begin{align}\notag
		k  \leq \abs{\mathcal{Z}_k} \leq N_1 (\tau_k, 2R(\mathcal{P} \cap S(\gamma)))=\frac{q \pi_q}{\pi_{q-1}} \cb{1 + \frac{16R(\mathcal{P} \cap S(\gamma))}{(1 - \epsilon')^2 \delta^H(\bar{\mathcal{P}}^\out_{k-1}, \mathcal{P} \cap S(\gamma))}}^{\frac{q-1}{2}},
	\end{align}
	which implies that
	\begin{equation}\label{eq:deltamiddleterm}
		\delta^{H}(\bar{\mathcal{P}}^{\out}_{k-1},\mathcal{P}\cap S(\gamma))\leq \frac{16R(\mathcal{P}\cap S(\gamma))}{(1-\epsilon^\prime)^2\sqb{\of{\frac{q\pi_q}{k\pi_{q-1}}}^{\frac{2}{1-q}}-1}}.
	\end{equation}

	Next, we will show that 
	\begin{equation}\label{eq:g0}
		\frac{16R(\mathcal{P}\cap S)}{(1-\epsilon^\prime)^2\sqb{\of{\frac{q\pi_q}{k\pi_{q-1}}}^{\frac{2}{1-q}}-1}}\leq (1+\epsilon)16R(\mathcal{P}\cap S(\gamma))\of{\frac{q\pi_q}{k\pi_{q-1}}}^{\frac{2}{q-1}}
	\end{equation} 
	holds for sufficiently large $k$. Note that \eqref{eq:g0} holds if \begin{equation}\label{eq:g}
		g(k)\coloneqq \frac{1}{1-\of{\frac{q\pi_q}{k\pi_{q-1}}}^{\frac{2}{q-1}}}-1\leq (1+\epsilon)(1-\epsilon^\prime)^2-1
	\end{equation}
	holds. Here, $g$ is a decreasing function on $\N$ with $\lim_{k\rightarrow\infty}g(k)=0$. Moreover, by the choice of $\epsilon^\prime$, we have $(1+\epsilon)(1-\epsilon^\prime)^2-1 > 0$. Then, there exists $N^{\prime\prime\prime\prime}\in \N$ such that \eqref{eq:g}, hence \eqref{eq:g0}, hold for all $k \geq N^{\prime\prime\prime\prime}$.
	
	From \eqref{eq:deltamiddleterm} and \eqref{eq:g0}, we obtain that 
	\[
	\delta^{H}(\bar{\mathcal{P}}^{\out}_{k-1},\mathcal{P}\cap S(\gamma))\leq (1+\epsilon)16R(\mathcal{P}\cap S(\gamma))\of{\frac{q\pi_q}{k\pi_{q-1}}}^{\frac{2}{q-1}},
	\]
	holds for $k\geq \max\{N^{\prime\prime},N^{\prime\prime\prime},N^{\prime\prime\prime\prime}\}$, as desired.
\end{proof}
By \Cref{thm:Lotov}, we prove that the approximation error obtained through the iterations of \Cref{alg1}, when the Euclidean norm is used in the scalarization, decreases by the order of $\mathcal{O}(k^{2/1-q})$.

\section{Examples and computational results}
\label{sec:experiments}

In this section, we consider some numerical examples to observe the performance of \Cref{alg1}, which is implemented using MATLAB R2018a along with CVX, a package to solve convex programs \cite{cvx, gb08}, and \emph{bensolve tools} \cite{lohne2017vector} to solve the scalarization and vertex enumeration problems in each iteration, respectively. The tests are performed using a 3.6 GHz Intel Core i7 computer with a 64 GB RAM. 

We consider three examples:
\begin{enumerate}
	\item Let $q\in \{2,3,4\}$, $e=(1,\ldots,1)^{\mathsf{T}}\in \R^q$ and  $C=\R_+^q$.
	\begin{align} 
		\text{minimize~~} & x \text{~~with respect to} \leq_{C} \notag \\
		\text{subject to~~} &	\norm{x - e}_2 \leq 1, \:\: x\in\R^q. \notag
	\end{align}
	\item Let $a^1=(1,1)^\mathsf{T},a^2=(2,3)^\mathsf{T}, a^3=(4,2)^\mathsf{T}$. 
	\begin{align}
		\text{minimize~~} & (\norm{x-a^1}_2^2,\norm{x-a^2}_2^2,\norm{x-a^3}_2^2)^\mathsf{T}
		\text{~~with respect to} \leq_{\R_+^3} \notag  \\
		\text{subject to~~} & x_1 + 2x_2 \leq 10, \:\: 0 \leq x_1 \leq 10, \:\: 0 \leq x_2 \leq 4, \:\: x\in\R^2. \notag
	\end{align}
	\item Let $b^1=(0,10,-120), b^2=(80, -448, 80), b^3=(-448,80,80)$.
	\begin{align}
		\text{minimize~~} & ( \norm{x}_2^2+b^1x,  \norm{x}_2^2+b^2x, \norm{x}_2^2+b^3x)^\mathsf{T}
		\text{~~with respect to} \leq_{\R_+^3} \notag  \\
		\text{subject to~~} & \norm{x}_2^2 \leq 100, \:\: 0 \leq x_i \leq 10 \text{~for~} i\in{\{1,2,3\}}, \:\: x\in\R^3. \notag
	\end{align}
\end{enumerate}

Example 1 is a standard illustrative example with a linear objective function, see \cite{ehrgott2011approximation,lohne2014primal}. In Example 2, the objective function is nonlinear while the constraints are linear; in Example 3, nonlinear terms appear both in the objective function and constraints \cite[Examples 5.8, 5.10]{ehrgott2011approximation}, \cite{miettinen2006experiments}. These examples have also been used recently in \cite{ararat2021norm} to test the performances of similar CVOP algorithms.   

We run \Cref{alg1} for these examples where we use the Euclidean norm within the scalarizations. To observe the convergence behavior, we measure the actual Hausdorff distance between the outer approximation and upper image at each iteration. In Figures \ref{fig:1} and \ref{fig:2}, we plot $\log \delta^H (\bar{\mathcal{P}}^\out_{k},\mathcal{P})$ versus $\log k$, we compare the linear regression of this graph with the graph of $\log (c k^{\frac{2}{1-q}})$, where $c \in \R_+$ is selected such that the slopes of the lines can be observed easily.\footnote{The slopes of the regression lines are indicated in the figures.} Except for Example 1 with $q=2$, the estimated rate of the decrease in the actual Hausdorff distance is better than the theoretical bound. Note that in Example 1 the image of the feasible region is the unit ball centered at $e \in \R^q$. This implies that the vertices of the outer approximations obtained through the iterations of the algorithm are distributed symmetrically with respect to the upper image, and many vertices have the same Euclidean distance to the upper image. Moreover, the number of vertices increase exponentially through the iterations. Hence, the actual Hausdorff distance remains constant for many iterations, which explains the step-function like behavior observed in Figure 1, especially for $q=2$.

\begin{figure}\label{fig:1}
	\caption{Example 1 with $q = 2, \epsilon = 10^{-5}$ (top left), $q = 3, \epsilon = 0.01$ (top right) and $q = 4, \epsilon = 0.0496$ (bottom).}
	\scalebox{0.48}{\begin{tikzpicture}
			\begin{axis}[
				xmin = 0, xmax = 6,
				ymin = -14, ymax = 2,
				xtick distance = 1,
				ytick distance = 2,
				grid = both,
				minor tick num = 1,
				major grid style = {lightgray!50},
				minor grid style = {lightgray!10},
				width = \textwidth,
				height = 0.75\textwidth,
				xlabel = {log $k$},
				ylabel = {log $\delta^H$},
				legend cell align = {left},
				legend pos = north east
				]
				
				\addplot[
				ultra thick,
				red,
				] file[skip first] {2d.dat};
				
				\addplot[
				domain = 0:5.5,
				thick,
				blue,
				] {-2*x + ln(1.5)};
				
				\addplot [very thick, black] table [
				x=x,
				y={create col/linear regression={y=y}}]
				{2d.dat}
				coordinate [pos=0.25] (A)
				coordinate [pos=0.4] (B)
				;
				\xdef\slope{\pgfplotstableregressiona}
				\draw (A) -| (B)
				node [pos=0.75,anchor=west]
				{\pgfmathprintnumber{\slope}};

				\legend{
					log $\delta^H$,
					log (1.5$k^{-2}$),
					linear regression		
				}
				
			\end{axis}
			
		\end{tikzpicture}
	} \scalebox{0.48}{\begin{tikzpicture}
			\begin{axis}[
				xmin = 0, xmax = 5,
				ymin = -6, ymax = 2,
				xtick distance = 1,
				ytick distance = 1,
				grid = both,
				minor tick num = 1,
				major grid style = {lightgray!50},
				minor grid style = {lightgray!10},
				width = \textwidth,
				height = 0.75\textwidth,
				xlabel = {log $k$},
				ylabel = {log $\delta^H$},
				legend cell align = {left},
				legend pos = north east
				]
				
				\addplot[
				ultra thick,
				red,
				] file[skip first] {3d.dat};
				
				\addplot[
				domain = 0:4.5,
				thick,
				blue,
				] {-x + ln(2.5)};
				
				\addplot [very thick, black] table [
				x=x,
				y={create col/linear regression={y=y}}]
				{3d.dat}
				coordinate [pos=0.25] (A)
				coordinate [pos=0.4] (B)
				;
				\xdef\slope{\pgfplotstableregressiona}
				\draw (A) -| (B)
				node [pos=0.75,anchor=west]
				{\pgfmathprintnumber{\slope}};

				\legend{
					log $\delta^H$,
					log (2.5$k^{-1}$),
					linear regression		
				}
				
			\end{axis}
			
		\end{tikzpicture}
	} \\ \centering \scalebox{0.48}{\begin{tikzpicture}
			\begin{axis}[
				xmin = 0, xmax = 5,
				ymin = -3.5, ymax = 2,
				xtick distance = 0.5,
				ytick distance = 0.5,
				grid = both,
				minor tick num = 1,
				major grid style = {lightgray!50},
				minor grid style = {lightgray!10},
				width = \textwidth,
				height = 0.75\textwidth,
				xlabel = {log $k$},
				ylabel = {log $\delta^H$},
				legend cell align = {left},
				legend pos = north east
				]
				
				\addplot[
				ultra thick,
				red,
				] file[skip first] {4d.dat};
				
				
				\addplot[
				domain = 0:4.8,
				thick,
				blue,
				] {-2/3*x + ln(4)};
				
				\addplot [very thick, black] table [
				x=x,
				y={create col/linear regression={y=y}}]
				{4d.dat}
				coordinate [pos=0.25] (A)
				coordinate [pos=0.4] (B)
				;
				\xdef\slope{\pgfplotstableregressiona}
				\draw (A) -| (B)
				node [pos=0.75,anchor=west]
				{\pgfmathprintnumber{\slope}};

				\legend{
					log $\delta^H$,
					log (4$k^{-2/3}$),
					linear regression
				}
				
			\end{axis}
			
	\end{tikzpicture}}
\end{figure}
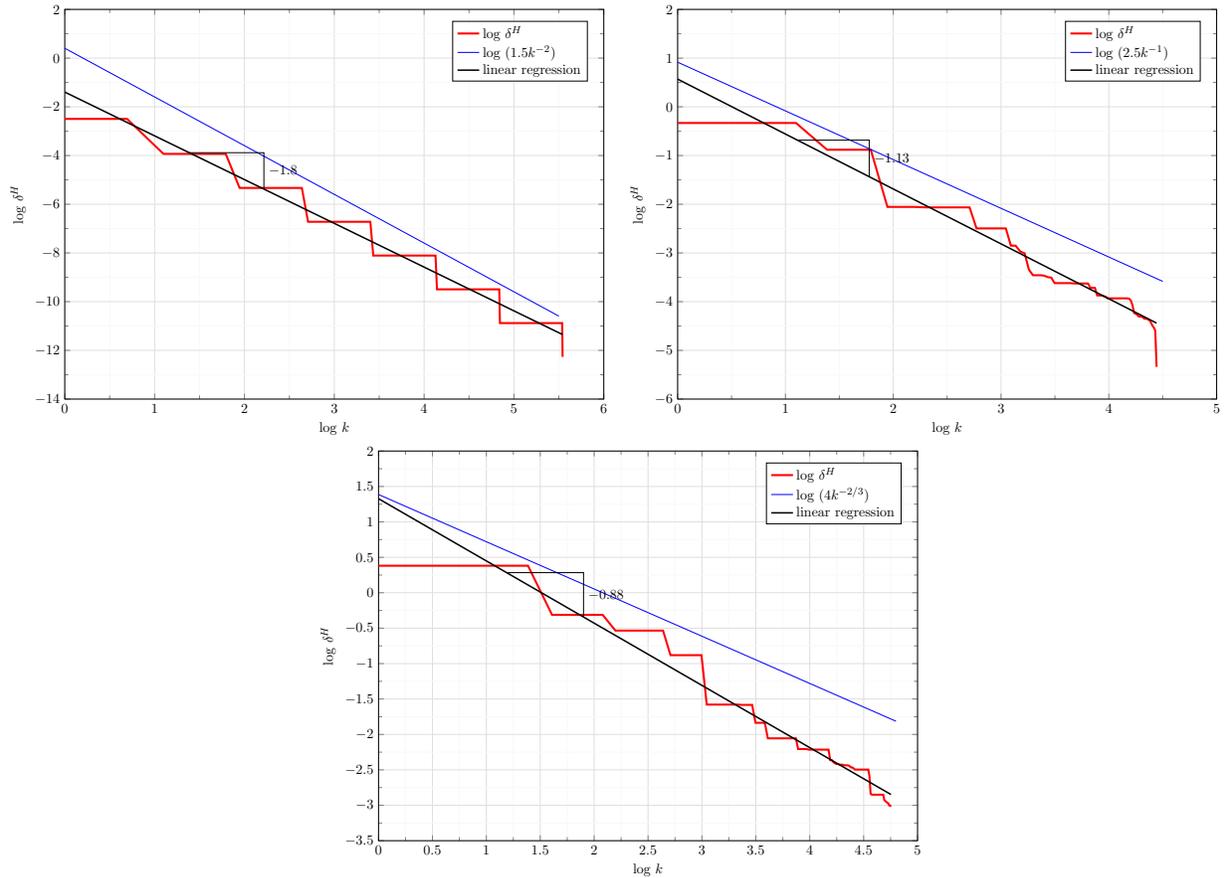

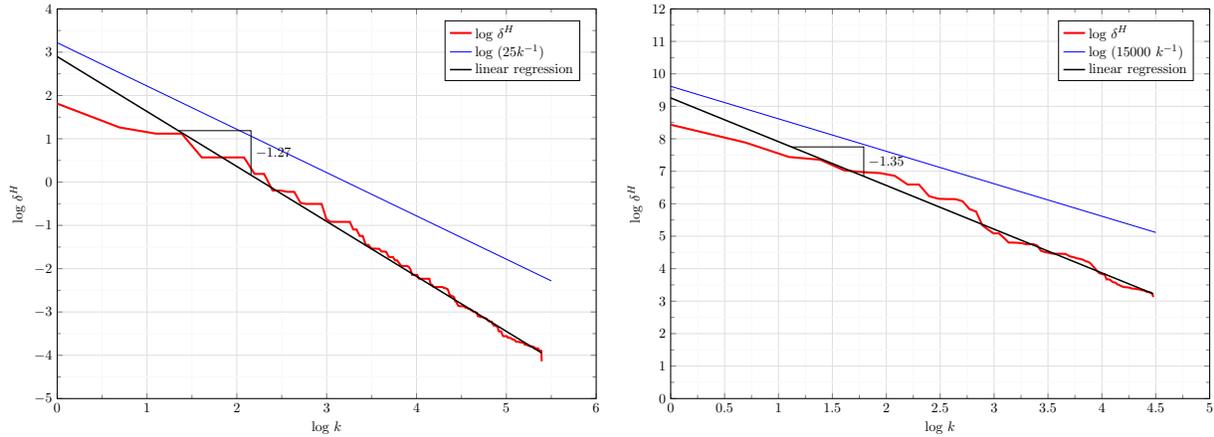
\begin{figure}[h]
	\label{fig:2}
	\caption{Example 2 with $\epsilon = 0.02$ (left) and Example 3 with $\epsilon=25$ (right).}
	\scalebox{0.48}{
		\begin{tikzpicture}
			\begin{axis}[
				xmin = 0, xmax = 6,
				ymin = -5, ymax = 4,
				xtick distance = 1,
				ytick distance = 1,
				grid = both,
				minor tick num = 1,
				major grid style = {lightgray!50},
				minor grid style = {lightgray!10},
				width = \textwidth,
				height = 0.75\textwidth,
				xlabel = {log $k$},
				ylabel = {log $\delta^H$},
				legend cell align = {left},
				legend pos = north east
				]
				
				\addplot[
				ultra thick,
				red,
				] file[skip first] {5_8.dat};
				
				\addplot[
				domain = 0:5.5,
				thick,
				blue,
				] {-x + ln(25)};
				
				\addplot [very thick, black] table [
				x=x,
				y={create col/linear regression={y=y}}]
				{5_8.dat}
				coordinate [pos=0.25] (A)
				coordinate [pos=0.4] (B)
				;
				\xdef\slope{\pgfplotstableregressiona}
				\draw (A) -| (B)
				node [pos=0.75,anchor=west]
				{\pgfmathprintnumber{\slope}};

				\legend{
					log $\delta^H$,
					log (25$k^{-1}$),
					linear regression		
				}
				
			\end{axis}
	\end{tikzpicture}
} \scalebox{0.48}{
		\begin{tikzpicture}
			\begin{axis}[
				xmin = 0, xmax = 5,
				ymin = 0, ymax = 12,
				xtick distance = 0.5,
				ytick distance = 1,
				grid = both,
				minor tick num = 1,
				major grid style = {lightgray!50},
				minor grid style = {lightgray!10},
				width = \textwidth,
				height = 0.75\textwidth,
				xlabel = {log $k$},
				ylabel = {log $\delta^H$},
				legend cell align = {left},
				legend pos = north east
				]
				
				\addplot[
				ultra thick,
				red,
				] file[skip first] {5_10.dat};
				
				\addplot[
				domain = 0:4.5,
				thick,
				blue,
				] {-x + ln(15000)};
				
				\addplot [very thick, black] table [
				x=x,
				y={create col/linear regression={y=y}}]
				{5_10.dat}
				coordinate [pos=0.25] (A)
				coordinate [pos=0.4] (B)
				;
				\xdef\slope{\pgfplotstableregressiona}
				\draw (A) -| (B)
				node [pos=0.75,anchor=west]
				{\pgfmathprintnumber{\slope}};

				\legend{
					log $\delta^H$,
					log (15000 $k^{-1}$),
					linear regression		
				}
				
			\end{axis}
			
	\end{tikzpicture}}
\end{figure}

\section{Conclusion}\label{sec:concl}

To the best of our knowledge, we study the convergence rate of a CVOP algorithm for the first time. The approximation error of the proposed algorithm decreases by the order of $\mathcal{O}(k^{{1}/{(1-q)}})$, where $q$ is the dimension of the objective space. Moreover, we consider the special case of using the Euclidean norm within the scalarizations separately and prove that the approximation error decreases by the order of $\mathcal{O}(k^{2/(1-q)})$. Since it is known that in multiobjective optimization ($C=\R^q_+$) this is the best possible convergence rate \cite{klamroth2007constrained, gruber1992aspects}, it cannot be improved in general. It is an open problem to prove the improved convergence rate for the proposed algorithm in the general setting when the underlying norm in the scalarizations is not necessarily the Euclidean norm.

\bibliographystyle{plain}
\bibliography{NormMinAlg_arxiv_new.bib}

\end{document}